\newtheorem{theorem}{Theorem}[section]
\newtheorem{corollary}[theorem]{Corollary}
\newtheorem{lemma}[theorem]{Lemma}
\newtheorem{remark}[theorem]{Remark}
\newtheorem{defi}[theorem]{Definition}
\newtheorem{claim}[theorem]{Claim}
\newcommand{\scal}[2]{\left\langle #1,#2 \right\rangle}
\newcommand{\g}{\nabla}
\newcommand{\di}{\mathrm{div}}
\newcommand{\lap}{\Delta}
\newcommand{\dr}{\partial}
\newcommand{\vol}{\mathrm{vol}}
\newcommand{\loc}{\mathrm{loc}}
\newcommand{\diam}{\mathrm{diam}}
\newcommand{\supp}{\mathrm{supp}}
\newcommand{\dist}{\mathrm{dist}}
\newcommand{\tr}{\mathrm{tr}}
\newcommand{\II}{\mathrm{I\!I}}
\newcommand{\Diff}{\mathrm{Diff}}
\newcommand{\BMO}{\mathrm{BMO}}
\newcommand{\id}{\mathrm{id}}
\newcommand{\Riem}{\mathrm{Riem}}
\newcommand{\Ric}{\mathrm{Ric}}
\newcommand{\Scal}{\mathrm{Scal}}
\newcommand{\Sch}{\mathrm{Sch}}
\newcommand{\Cot}{\mathrm{Cot}}
\newcommand{\Hess}{\mathrm{Hess}}
\newcommand{\SOo}[1]{\mathrm{SO}\left( #1,\mathbb{R} \right)}
\newcommand{\B}{\mathbb{B}}
\newcommand{\D}{\mathbb{D}}
\newcommand{\R}{\mathbb{R}}
\newcommand{\C}{\mathbb{C}}
\newcommand{\N}{\mathbb{N}}
\newcommand{\s}{\mathbb{S}}
\newcommand{\Mr}{\mathcal{M}}
\newcommand{\Br}{\mathcal{B}}
\newcommand{\Hr}{\mathcal{H}}
\newcommand{\Dr}{\mathcal{D}}
\newcommand{\ust}{\underset}
\newcommand{\vp}{\varphi}
\newcommand{\ve}{\varepsilon}
\title{Huber Theorem revisited in dimensions $2$ and $4$}
\author{Paul Laurain}
\address{Departement de Mathématiques, Université Gustave Eiffel, France}
\email{paul.laurain@univ-eiffel.fr}
\author{Dorian Martino}
\address{Department of Mathematics, ETH Zürich, Rämistrasse 101, 8092 Zürich, Switzerland}
\email{dorian.martino@math.ethz.ch}
\begin{document}
	
	\begin{abstract}
		We study the second Huber theorem in dimensions 2 and 4. In dimension 2, we prove a new version assuming that the Gauss curvature lies in a negative Sobolev space using Coulomb frames. In dimension $4$, given a metric having a pointwise singularity with $L^p$-bounds on the Bach tensor, we construct a conformal metric which is regular across the singularity. To do so, we introduce another Coulomb-type condition, similar to the case of Yang--Mills connections. This enables us to obtain a conformal metric satisfying an $\varepsilon$-regularity property. We obtain a generalization of the two-dimensional case that can be applied to study the singularities of Bach-flat metrics and immersions with second fundamental forms in $W^{2,\frac{4}{3}+\varepsilon}$. 
	\end{abstract}
	
	\maketitle
	

	\section{Introduction}
	
	Isolated singularities of manifolds arise frequently in geometric variational problems, for instance in the study of harmonic maps and their generalizations \cite{ai2017,dalio2015,laurain2013,laurain2014}, Willmore surfaces \cite{bernard2014,laurain2018,martino2023}, but also Einstein and Bach-flat metrics \cite{anderson1990,anderson2005,tian2005,tian2008} or Yang--Mills connections \cite{riviere2020}. In such problems, one usually have an a priori estimate taking the form of an $L^p$ bound on some curvature quantity (such as the second fundamental form for "extrinsic" problems or the Riemann tensor for "intrinsic" problems). A natural question is to understand the geometric behaviour across the singularity: is it possible to find a suitable extension across the singularity? 
	In 1958, Huber proved, in his seminal paper \cite{Huber}, the following result on the topology of surfaces with finite total negative curvature.
	\begin{theorem}\label{huber1}
		Let $(M,g)$ be an orientable Riemannian surface of class $C^2$, open and complete such that 
		$$
		\int_M K^-_g \, d\vol_g <+\infty,
		$$
		where $K^-_g \coloneq \max(0,-K_g)$ and $K_g$ is the Gauss curvature. Then $M$ has finite topological type and moreover
		$$
		\int_M \vert K_g\vert \, d\vol_g <+\infty.
		$$
	\end{theorem}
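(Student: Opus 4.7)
The approach combines the uniformization theorem with a potential-theoretic analysis of the conformal factor near each end of $M$. First, uniformization of oriented $C^2$ Riemannian surfaces provides local isothermal charts in which $g = e^{2u}|dz|^2$ with $u \in C^2$, so that the Gauss curvature satisfies the Liouville equation $-\Delta u = K_g e^{2u}$. Writing $K_g = K_g^+ - K_g^-$, the hypothesis $\int_M K_g^- \, dv_g < \infty$ translates locally to $(\Delta u)^+ \in L^1$, i.e.\ $u$ is subharmonic up to an $L^1$ Riesz correction.

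The main step, and the hardest, is to show that $M$ is conformally equivalent to a closed Riemann surface $\bar{\Sigma}$ with finitely many punctures $\{p_1, \dots, p_k\}$ removed. I would follow the classical route of Huber and Blanc--Fiala: combine the $L^1$ control on $(\Delta u)^+$ with completeness of $g$ to prove that each end of $M$ has parabolic conformal type. Concretely, on a neighborhood of infinity of an end, represent $u$ via a Green's function; the hyperbolic conformal type is ruled out because it would force $u$ to stay bounded above, which is incompatible with completeness (radial geodesics would have finite length). Each parabolic end is then conformally a punctured disk $\{0 < |z| < 1\}$. Finiteness of the number of ends follows by applying Gauss--Bonnet to exhausting subdomains together with the control on $\int K_g^-$: each supplementary end costs a definite amount of Euler characteristic and is incompatible with the one-sided curvature bound.

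Once each end is realized as a punctured disk carrying $g = e^{2u}|dz|^2$, I would study the asymptotic of $u$ at $z = 0$ by splitting $u = u_1 + u_2$, with $u_1$ the Newtonian potential of $(\Delta u)^+ \in L^1$ (hence bounded above with at most logarithmic growth) and $u_2$ superharmonic on the punctured disk. Completeness of $g$ near $0$ then forces an asymptotic of the form
\[
u(z) = -\beta \log|z| + O(1), \qquad \beta \geq 1.
\]
Green's identity on the annulus $\{r < |z| < 1\}$ yields $\int K_g e^{2u}\,dx = -\int \Delta u \, dx$, whose boundary contribution converges to a finite value as $r \to 0$ thanks to the above asymptotic. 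Summing the finitely many ends and adding the compact contribution via Gauss--Bonnet on $\bar{\Sigma}$, we obtain $\int_M |K_g|\,dv_g < \infty$. The main obstacle is really the finite topology step: since the hypothesis only controls one side of $K_g$, one cannot invoke the usual Cohn--Vossen inequality directly, and the argument has to be carried out with potential theory for functions with $L^1$ positive Laplacian.
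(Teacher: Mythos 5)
The paper does not prove Theorem~\ref{huber1}: it is quoted from Huber's 1958 paper \cite{Huber} as a classical background result. The new content of the present paper concerns the \emph{second} Huber theorem (finite conformal type) and its quantitative version, \Cref{th:Huber2D}, together with the four-dimensional analogue. So there is no ``paper's own proof'' to compare against; your sketch can only be assessed against the classical Huber--Blanc--Fiala argument, which it does follow in broad outline (isothermal coordinates, Liouville equation, $(\Delta u)^+\in L^1$, exclusion of hyperbolic ends by completeness, Gauss--Bonnet exhaustion for the finite-ends count).

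Two points deserve attention. First, the ruling-out of hyperbolic ends is stated too tersely: ``$u$ would stay bounded above'' is not literally what the potential theory gives you, and the actual argument in Huber and Blanc--Fiala is a genuine length/area estimate on the annulus. Second, and more seriously, the asymptotic $u(z) = -\beta\log|z| + O(1)$ is an overclaim, and it is exactly the point that the quantitative results in this paper (Theorem 1.3 of Müller--\v{S}ver\'ak and \Cref{th:Huber2D}) are careful about: in general the remainder is only in $\BMO\cap W^{1,(2,\infty)}$, not $L^\infty$. The pseudo-sphere computation in \Cref{sec:pseudo_sphere} shows a conformal factor $\vp(z)=-\log|\log|z||\notin L^\infty$ even though $\int|K_g|\,d\vol_g<\infty$, so the pointwise $O(1)$ bound cannot be expected in general. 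Moreover, even if you had $u=-\beta\log|z|+O(1)$ pointwise, this alone does not control the boundary flux $\int_{|z|=r}\partial_\nu u$ in your Green's identity step; you need a bound on the gradient of the remainder along a sequence of good radii, which is precisely what the $L^{(2,\infty)}$ control on $\g\vp$ provides via \eqref{eq:good_radius}. Your argument can be repaired by replacing the $O(1)$ claim with the correct weak control and selecting good radii, but as written the Green's-identity step has a gap.
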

	Here finite topological type means that there exists a closed (i.e. compact and without boundary) surface $\overline{M}$ and a finite set of points  $p_1,\dots,p_N \in \overline{M}$ such that $M$ is homeomorphic to $\overline{M}\setminus \{p_1,\dots,p_N\}$. Once the topological type is fixed, a natural question concerns the conformal type, i.e. which kind of Riemann structure is supported by such surfaces. Thanks to Hopf--Rinow theorem, $(M,g)$ is complete if every divergent path on $M$ has infinite length. In the same paper \cite{Huber}, Huber also describes the conformal class of $(M,g)$ in \Cref{huber1} by proving the following result.
	
	\begin{theorem}\label{huber2}
		Let $(M,g)$ be an orientable surface of class $C^2$, open and complete such that 
		$$
		\int_M K^-_g \, dv_g <+\infty.
		$$
		Then $M$ has finite conformal type.
	\end{theorem}
	Here finite conformal type means that there exists a closed Riemann surface $\overline{M}$ and a finite set of points  $p_1,\dots,p_N \in \overline{M}$ such that $M$ is conformally equivalent to $\overline{M}\setminus \{p_1,\dots,p_N\}$. In other words, there exist a $C^1$-diffeomorphism $\psi\colon M \rightarrow \overline{M}\setminus \{p_1,\dots,p_N\}$ and a metric $g_0$ on $\overline{M}$ having constant Gaussian curvature, such that $\psi^*(g_0)=e^{2u}g$ for some continuous function $u\colon M\rightarrow \R$.\\
	
	In 1995, inspired by the work of Toro \cite{Toro}, Müller and \v{S}ver\'{a}k \cite{muller1995} proved an extrinsic version of the Huber theorem: assuming for initial regularity only the fact that the second fundamental form lies in $L^2$, they proved that those surfaces admit a Lipschitz conformal parametrization. Around a potential singularity, they prove the following theorem, where $\D$ is the unit disk centred at the origin in $\C$.
	
	\begin{theorem}[\cite{muller1995}, Lemma A.5 in \cite{Ri14}]\label{th:imm}  Let $\Phi\colon \D\setminus\{0\} \rightarrow \R^m$ be a conformal immersion such that $\Phi \in W^{1,2}( \D\setminus\{0\})$ and its Gauss map is also in $W^{1,2}( \D\setminus\{0\})$. Then there exists an integer $\theta_0\geq 1$ such that, up to translation, the following holds for some constant $C>1$ in a neighbourhood of the origin
		$$
		C^{-1}\, |z|^{\theta_0} \leq \vert \Phi (z)\vert \leq C\, \vert z\vert^{\theta_0} \qquad \text{ and } \qquad  C^{-1}\, |z|^{\theta_0-1} \leq \vert \nabla \Phi (z)\vert \leq C\, \vert z\vert^{\theta_0-1}.
		$$
		Moreover, if we denote the conformal factor
		$$
		\lambda(z) = \frac{1}{2}\log\left(\frac{1}{2} \vert \nabla \Phi (z)\vert^2 \right)= (\theta_0-1)\log(\vert z\vert)+ u(z),
		$$
		then the remainder $u$ lies in $W^{2,1}(\D)$ and we have
		\begin{align*}
			\begin{cases}
				\g \lambda \in L^{2}(\D) &\text{ if } \theta_0=1,\\[1mm]
				\g \lambda \in L^{(2,\infty)}(\D) &\text{ if } \theta_0\geq 2.
			\end{cases}
		\end{align*}
	\end{theorem}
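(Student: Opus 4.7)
The plan is to analyze the Liouville equation $-\lap \lambda = K_g e^{2\lambda}$ on $\D\setminus\{0\}$, whose right-hand side lies in $L^1(\D)$: in the conformal chart, the Gauss equation gives a pointwise bound $|K_g|e^{2\lambda} \lesssim |\nabla \vec n|^2$, and the $W^{1,2}$ hypothesis on the Gauss map provides integrability. I then decompose $\lambda = w + h$, where $w$ solves the Dirichlet problem $-\lap w = K_g e^{2\lambda}$ on $\D$ with $w\big|_{\dr\D}=0$. Sharp Lorentz-space estimates for an $L^1$ source yield $w \in L^{(2,\infty)}(\D)$, with an improvement to $L^2(\D)$ when the source possesses extra integrability. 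The complementary piece $h := \lambda - w$ is harmonic on $\D \setminus \{0\}$.

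The next step is a Laurent-type analysis of $h$. Any harmonic function on the punctured disk admits the decomposition
$$h(z) = \alpha \log|z| + \Re\para{\sum_{n \neq 0} c_n z^n}$$
for some $\alpha \in \R$ and $\{c_n\} \subset \C$. I use the non-linear constraint $e^{2\lambda} \in L^1(\D)$ (equivalent to $\Phi \in W^{1,2}$) to rule out every negative-index coefficient. Indeed, a non-trivial term $c_{-n} z^{-n}$ with $n \geq 1$ forces the angular mean of $e^{2\Re(c_{-n}z^{-n})}$ to grow faster than any power of $1/|z|$ near $0$, contradicting integrability. Hence $h(z) = \alpha \log|z| + h_0(z)$ with $h_0$ smooth harmonic on $\D$.

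It remains to identify $\alpha = \theta_0 - 1$ with $\theta_0 \in \N^*$. The $W^{1,2}$ bound on $\Phi$ forces $\alpha > -1$, so that $|\nabla \Phi(z)|^2 = 2e^{2\lambda} \sim |z|^{2\alpha}$ is locally integrable, $\Phi$ extends continuously at $0$, and, after translating, $|\Phi(z)| \sim |z|^{\alpha+1}$. The winding of the loop $\theta \mapsto \Phi(re^{i\theta})$ projected onto a generic two-plane of $\R^m$ is then an integer equal to $\alpha + 1$, so $\theta_0 := \alpha + 1 \in \N^*$. The remainder $u = h_0 + w$ inherits the regularity of $w$: in general $L^{(2,\infty)}(\D)$; in the embedded case $\theta_0 = 1$, $e^{2\lambda}$ is bounded near $0$, so the source of the Dirichlet problem has better integrability and one bootstraps to $u \in L^2(\D)$.

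The main obstacle is the Laurent argument in the second paragraph: the constraint $e^{2\lambda} \in L^1$ is non-linear in $\lambda$, so ruling out Laurent singularities other than $\log|z|$ requires a careful quantitative analysis of angular averages of exponentials of harmonic functions near the puncture. This is also the point where the dichotomy between the embedded case ($\theta_0 = 1$, where $\lambda$ is bounded above) and the branched case ($\theta_0 \geq 2$) becomes visible, and from which the precise $L^2$ versus $L^{(2,\infty)}$ regularity of $u$ ultimately comes.
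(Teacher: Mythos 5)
The statement you are proving is a background result the paper cites from \cite{muller1995} and \cite{Ri14} without reproving, so there is no proof in the paper to compare yours against. The nearest analogue is the paper's proof of \Cref{th:Huber2D}, which is a strict generalization: there the authors cannot assume a conformal parametrization is given, so they first build one via Coulomb frames on annuli $\D\setminus\D_r$ and pass to the limit $r\to 0$ using Green-kernel estimates on annuli and Gauss--Bonnet to pin down the residue. Your proposal exploits the fact that in the present statement the conformal chart is already supplied, and replaces the annulus argument by a direct Laurent decomposition of the harmonic part. This is a legitimately different and in places simpler route.

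That said, there are real gaps. First, the proposal never invokes the compensated-compactness structure that the paper itself flags as the decisive point: since the Gauss map is $W^{1,2}$, the source term $K_g e^{2\lambda}=\langle \dr_x \vec n,\dr_y\vec n\rangle$ (up to sign and multiple) is a Jacobian, hence lies in the local Hardy space $\mathcal H^1_{\loc}$ by Coifman--Lions--Meyer--Semmes, and Wente-type estimates then give the Dirichlet part $w$ pointwise bounds rather than mere $\BMO$. Without this input, the dichotomy you state at the end is circular: you say ``in the embedded case $\theta_0=1$, $e^{2\lambda}$ is bounded near $0$, so the source has better integrability,'' but the boundedness of $e^{2\lambda}$ is exactly the conclusion that the Hardy-space structure provides, not something available \emph{a priori} from an $L^1$ source. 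Second, your Laurent argument needs to rule out negative-frequency terms from $\int e^{2\lambda}<\infty$ with $\lambda=w+h$ rather than from $\int e^{2h}<\infty$; because $w\in\BMO$, $e^{-2w}$ only enjoys John--Nirenberg exponential integrability, and turning the angular Laplace estimate on $e^{2h}$ into a contradiction through the product $e^{2w}e^{2h}$ requires a quantitative step (e.g.\ shrink the disk so that $\|w\|_{\BMO}$ is small, push the John--Nirenberg exponent beyond the conjugate H\"older index, and then apply H\"older) that you acknowledge but do not supply. Third, the identification of $\theta_0\in\N^*$ via the winding of $\theta\mapsto\Phi(re^{i\theta})$ onto a generic two-plane is not quite an argument: it does not use conformality, and without it there is no reason for the winding to equal $\alpha+1$. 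The standard route here is through the fact that $\dr_z\Phi$ is an isotropic $\C^m$-valued map with $|\dr_z\Phi|=e^{\lambda}$, whose phase extends across the puncture and gives the integer order of vanishing; this is the argument the paper itself sketches (in \Cref{lm:integer}) via a meromorphic change of coordinates. In short the skeleton of your proof is reasonable, but the Hardy-space/Wente ingredient is missing, and that is precisely where the sharp regularity comes from.
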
	  
	
	Here $L^{(2,\infty)}$ denotes the weak-$L^2$ space also known as Lorentz space, see \cite[Section 1.4]{Gra1} for a precise definition. The key ingredient in the proof of \Cref{th:imm} is the fact that the Gauss curvature is the Jacobian of the gradient of the Gauss map. Hence the Gauss curvature lies in a strictly smaller space than $L^1$, namely in the Hardy space $\mathcal{H}^1$, see the seminal work of Coiffman, Lions, Meyer and Semmes \cite{CMLS}. Thus, \Cref{th:imm} follows by combining the $\Hr^1$-$\BMO$ duality and the Liouville equation.\\
	
	The main goal of this paper is to provide possible generalizations to \Cref{huber2} and \Cref{th:imm} in higher dimensions and more specifically in the 4-dimensional setting. That is to say, given a metric $g$ of class $C^{\infty}$ on $\B^4\setminus \{0\}$, where $\B^4$ stands for the unit 4-dimensional Euclidean ball centred at the origin\footnote{If we consider the case where $g$ is invariant by the action of a finite group of $\SOo{4}$, we can also consider the case of orbifolds.}, under which condition can we find a metric $h$ in the conformal class of $g$ that can be extended through the singularity? This question has already been addressed by various authors, who focused mainly on the generalization of \Cref{huber1}, namely the topological part. The first result in this direction has been obtained in 2002 by Chang, Qing and Yang for the conformally flat case.
	\begin{theorem}[Theorem 2 of \cite{CQY}]\label{th:CQY}
		Let $(\Omega \subset S^4, g=e^{2w}g_0)$ be a complete conformally flat metric satisfying:
		\begin{enumerate}[label=(\roman*)]
			\item There exists $0<c_1<c_2$ such that scalar curvature $\Scal_g$ verifies $c_1 \leq \Scal_g \leq c_2$ and $|\g \Scal_g|_g \leq c_2$,
			\item The Ricci curvature has a lower bound,
			\item The $Q$-curvature is integrable, i.e.
			$$\int_\Omega \vert Q_g\vert \, d\vol_g <+\infty .$$
		\end{enumerate}
		Then it holds $\Omega=S^4\setminus \{p_1,\dots ,p_k\}$.
	\end{theorem}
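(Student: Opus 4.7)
The plan is to combine a potential-theoretic representation of the conformal factor coming from the Paneitz equation with the pointwise curvature bounds, in the spirit of the analysis of the scalar curvature equation in dimension two. First, via stereographic projection $\pi:S^4\setminus\{N\}\to \R^4$ (with $N\in \Omega$ chosen if necessary), identify $\Omega$ with an open subset of $\R^4$ and write $g=e^{2u}|dx|^2$. Under this conformal change from the flat metric, and because $Q$ of the Euclidean metric vanishes, the $Q$-curvature equation becomes the semilinear biharmonic equation
$$\lap^2 u = 2\, Q_g\, e^{4u} \qquad\text{in }\pi(\Omega),$$
and hypothesis (iii) reads $\int_{\pi(\Omega)} |Q_g|\, e^{4u}\,dx<\infty$.

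Second, using the fundamental solution $-\frac{1}{8\pi^2}\log|x|$ of $\lap^2$ in $\R^4$, introduce the logarithmic potential
$$v(x) := \frac{1}{4\pi^2}\int_{\pi(\Omega)} \log\frac{|y|}{|x-y|}\, Q_g(y)\,e^{4u(y)}\,dy,$$
which satisfies $\lap^2 v = 2Q_g e^{4u}$ distributionally and admits the asymptotic expansion $v(x) = -\alpha\log|x| + O(1)$ at infinity with $\alpha = \frac{1}{4\pi^2}\int Q_g e^{4u}\,dy$. Then $h:=u-v$ is biharmonic on $\pi(\Omega)$, and the goal becomes to show that $h$ is polynomial of controlled degree. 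One can then add back the points of $S^4\setminus\Omega$ (if any) as a similar logarithmic-type singular contribution on the other side.

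Third, upgrade $h$ to a polynomial by means of the curvature bounds. In the conformal gauge $g=e^{2u}|dx|^2$, the scalar curvature is $R_g = -6 e^{-2u}(\lap u + |\nabla u|^2)$ and the Ricci tensor has an analogous second-order expression in $u$; hypothesis (i) together with the one-sided Ricci bound (ii) translate into pointwise bounds on $\lap u + |\nabla u|^2$ and $\mathrm{Hess}(u) - \frac{1}{4}\lap u\,\delta$, hence on the growth of $u$ and its derivatives up to order three. Combined with the explicit logarithmic asymptotics of $v$, these bounds force $h$ to satisfy a polynomial growth condition at infinity, and a Liouville-type theorem for biharmonic functions with polynomial growth then yields $h \in \mathbb{R}[x]$ with $\deg h \leq 2$.

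Finally, I would analyse $u=v+h$ near each point $p\in S^4\setminus \Omega$. The completeness of $g$ together with the polynomial-plus-logarithm structure of $u$ forces
$$u = -\beta_p\log \dist_{g_0}(\cdot,p) + O(1)\qquad\text{near }p,$$
for some $\beta_p>0$, so each such $p$ is isolated in $S^4\setminus\Omega$; compactness of $S^4$ then gives finiteness. The main obstacle will be the third step: extracting the sharp polynomial degree on $h$ from only pointwise bounds on $R_g$, $\nabla R_g$ and a one-sided Ricci bound, since these do not directly control $\mathrm{Hess}(u)$, and the identification of $\deg h$ is what ultimately pins down the allowed growth of $u$ at the boundary points of $\Omega$ in $S^4$.
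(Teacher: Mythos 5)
The paper does not prove this theorem; it is quoted verbatim from Chang--Qing--Yang \cite{CQY} as background for the discussion of 4-dimensional Huber-type results. So there is no ``paper's own proof'' to compare against, and I will simply review the proposal against the structure of the actual CQY argument.

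Your sketch reproduces the right potential-theoretic framework (the equation $\lap^2 u = 2Q_g e^{4u}$, the logarithmic potential $v$ with $\Delta^2 v = 2Q_g e^{4u}$, the asymptotics $v \sim -\alpha\log|x|$), but it contains a logical gap at its center. The function $h = u - v$ is biharmonic only on $\pi(\Omega)$, which is an open subset of $\R^4$ whose complement is a compact set of \emph{unknown} size. A Liouville theorem for biharmonic functions of polynomial growth applies on all of $\R^4$, or at least on $\R^4$ minus a set of zero biharmonic capacity; you cannot invoke it on $\pi(\Omega)$ before first establishing that $S^4\setminus\Omega$ is small. But that smallness is essentially the theorem's conclusion, so the argument as stated is circular: you obtain the ``polynomial-plus-logarithm'' structure by already assuming you may treat the complement as negligible.

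What is missing is precisely what makes the CQY proof nontrivial: the Ricci lower bound (hypothesis (ii)) is used through Bishop--Gromov volume comparison, together with completeness and finiteness of $\int|Q_g|$, to show a priori that the boundary $\partial\Omega$ consists of finitely many points (each end has definite volume growth, and each carries a quantized amount of total $Q$-curvature). Only after the singular set is seen to be finite does the representation $u = v + (\text{biharmonic on }\R^4\text{ minus finitely many points with controlled singularities})$ become available, and only then can the scalar curvature bounds in (i) pin down the degree of the polynomial part. In your write-up the Ricci bound is mentioned only as one more ingredient in ``pointwise control of $u$,'' rather than as the source of the covering/volume argument that controls the topology of $\Omega$ — which is where the heart of the proof lies. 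You flag the polynomial-degree step as the ``main obstacle,'' but the prior, more serious obstacle is the one above.
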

	Independently, Carron and Herzlich \cite{CH02} proved in 2002 that an open set $\Omega$ of a compact manifold $(M^n,g_0)$ (for $n>2$) endowed with a complete conformal metric $g=e^{2u}g_0$ is also given by $\Omega=M\setminus \{p_1,\ldots,p_k\}$, under some conditions on the volume growth of balls for $g$ and the finiteness of the $L^{\frac{n}{2}}$-norm of the Ricci tensor of $g$. This result has later been improved by Carron in 2020 \cite[Corollary C]{Carron20}. Finally in 2022, Chen and Li reached the following optimal version.
	\begin{theorem}[Theorem 1.4 in \cite{CL22}]\label{th:CL}
		Let $\Omega$ an open set of $(M^n,g_0)$ a Riemannian manifold of dimension $n>2$. Assume that $\Omega$ is endowed with a complete Riemannian metric $g$
		which is conformal to $g_0|_{\Omega}$ and such that:
		$$\int_\Omega \vert \mathrm{Ric}\vert^{\frac{n}{2}}\, d\vol_g < +\infty. $$
		Then  $M\setminus \Omega$ is discrete. In particular, if $(M,g_0)$ is compact then it holds $\Omega=M\setminus \{p_1,\dots ,p_J\}$ for some points $p_1,\ldots,p_J\in M$.
	\end{theorem}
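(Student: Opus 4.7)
The plan is to combine an $\epsilon$-regularity statement for complete conformal metrics with a concentration/covering argument: smallness of the local $L^{n/2}$-Ricci integral on a ball will force the conformal factor $u$ to stay bounded there, so the metric extends across the corresponding point, and the finiteness of the total Ricci integral limits the number of points where this can fail. To set things up, write $g = e^{2u} g_0$ on $\Omega$. Under this conformal change,
\begin{align*}
\Ric_g = \Ric_{g_0} - (n-2)\bigl(\Hess_{g_0} u - du \otimes du\bigr) - \bigl(\lap_{g_0} u + (n-2)|\g u|^2_{g_0}\bigr) g_0,
\end{align*}
and crucially $|\Ric_g|_g^{n/2} \, dv_g = |\Ric_g|_{g_0}^{n/2} \, dv_{g_0}$, so the hypothesis is conformally invariant. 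Taking the trace gives the scalar curvature equation
\begin{align*}
R_g e^{2u} = R_{g_0} - 2(n-1) \lap_{g_0} u - (n-1)(n-2) |\g u|^2_{g_0},
\end{align*}
while the traceless part of the formula controls $\Hess u - du \otimes du$ modulo its trace.

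The central lemma is an $\epsilon$-regularity: there exists $\epsilon_0 = \epsilon_0(n, (M, g_0)) > 0$ such that for every sufficiently small $g_0$-ball $B \subset \Omega$ with $\int_B |\Ric_g|^{n/2} \, dv_g < \epsilon_0$, the oscillation of $u$ on $\tfrac{1}{2} B$ is bounded by a universal constant. I would prove this by Moser-type iteration applied to $u$ (or, equivalently, to the Yamabe-type variable $v = e^{\frac{n-2}{2} u}$), using the critical Sobolev embedding on small $g_0$-balls together with smallness of the Ricci integral to absorb the critical quadratic nonlinearity $|\g u|^2$. This is the main obstacle: unlike the $2$-dimensional Liouville setting with its Hardy-space Jacobian structure, the equation for $u$ in dimension $n \geq 3$ is critically nonlinear, and the Ricci bound only controls a coupled second-order object mixing $\Hess u$ and $du \otimes du$; disentangling the trace and traceless parts and closing the bootstrap requires a delicate combination of the conformal invariance above with a sharp Sobolev inequality, and the threshold $\epsilon_0$ is ultimately dictated by the local Sobolev constant of $(M, g_0)$.

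With the lemma in hand, define the concentration set
\begin{align*}
\Sigma = \Bigl\{ p \in M : \limsup_{r \to 0} \int_{B_r(p) \cap \Omega} |\Ric_g|^{n/2} \, dv_g \geq \epsilon_0 \Bigr\},
\end{align*}
which has at most $\epsilon_0^{-1} \int_\Omega |\Ric_g|^{n/2} \, dv_g$ elements by a standard Vitali covering. If $p \in M \setminus \Omega$ were not in $\Sigma$, then a chain of $\epsilon$-regularity balls inside $\Omega$ approaching $p$ would yield a uniform bound on $u$ along such a chain, and hence a $g_0$-rectifiable path reaching $p$ of finite $g$-length $\int e^u \, ds$, contradicting the completeness of $g$. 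Therefore $M \setminus \Omega \subset \Sigma$ is finite, which proves discreteness; when $M$ is compact, $\Sigma$ is finite and hence $\Omega = M \setminus \{p_1, \ldots, p_k\}$.
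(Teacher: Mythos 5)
This statement is quoted from Chen--Li \cite{CL22} as background in the introduction; the present paper does not reprove it, so there is no ``paper's own proof'' to compare against. Evaluating the proposal on its own merits, the overall architecture (conformal invariance of $|\Ric_g|^{n/2}\,dv_g$, $\ve$-regularity, finite concentration set, completeness to exclude accumulation) is a sensible blueprint, and the conformal transformation formulas are correct. However, there is a genuine gap at the very step where the contradiction is supposed to close.

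The $\ve$-regularity lemma, as you have stated it, controls only the \emph{oscillation} of $u$ on $\tfrac12 B$, and that is all it can possibly control: the equations for $u$ determine it only up to an additive constant on each ball, and the hypothesis $\int_B |\Ric_g|^{n/2}\,dv_g < \ve_0$ is invariant under replacing $u$ by $u+c$. To reach a boundary point $p\in M\setminus\Omega$ from inside $\Omega$, you must traverse an infinite chain of balls of radii $\sim 2^{-j}$. Each link contributes an oscillation bounded by some universal $C_0$, but these do \emph{not} telescope into a uniform bound on $u$: the function $u(x)=-\beta\log|x-p|$ has bounded oscillation on every dyadic annulus, yet is unbounded, and for $\beta\geq 1$ it produces a complete metric. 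This is precisely the phenomenon that the present paper handles in the 2D case (\Cref{th:Huber2D}) by explicitly splitting $\lambda = \alpha\log|x| + \vp$, bounding only the remainder $\vp$ in $\BMO\cap W^{1,(2,\infty)}$, and identifying the residue $\alpha$ from a separate Gauss--Bonnet argument. Your chain of oscillation estimates, without an additional ingredient that tracks or rules out this log-type residue, cannot conclude that $u$ is uniformly bounded near $p$, and the claimed contradiction with completeness evaporates. Indeed, completeness \emph{demands} such logarithmic growth, so the correct argument must embrace it rather than contradict it: one has to extract a quantitative lower bound on the residue (equivalently, a non-collapsing / volume growth statement for $g$ near the puncture), show that each puncture carries a definite amount of the $L^{n/2}$-Ricci mass \emph{because of} this volume growth, and only then count punctures. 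A purely local Moser-iteration bound on $\osc u$ does not supply this.

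A secondary concern: your argument, if it worked, would show that $M\setminus\Omega$ is \emph{finite} for any $(M,g_0)$, whereas Chen--Li only assert discreteness in the non-compact case and finiteness when $M$ is compact. This hints that your $\ve_0 = \ve_0(n,(M,g_0))$ is not actually uniform over a non-compact base, which is another place the covering count would need care.
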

	
	\Cref{th:CQY} and \Cref{th:CL} are in the spirit of the first Huber theorem, i.e. controlling the topology of the domain. But it is important to notice that those results are not a complete generalization of \Cref{huber1}, since they need $g$ to be a priori conformally equivalent to a background metric $g_0$ which extends smoothly outside of the domain of definition of $g$. The content of \Cref{huber2} is to prove, in dimension 2, that such an extension exists. If one would like a strict equivalent of Huber's theorem, one should also prove the existence of such an extension.	In 1997, Herzlich \cite{Her97} studied the conformal compactification of asymptotically flat Riemannian manifolds $(M^n,g)$ of dimension $n\geq 3$ and proved the following result, see \Cref{sec:asympt_flat} for the definitions.
	\begin{theorem}[Theorem C in \cite{Her97}]\label{th:Herzlich}
		Any asymptotically flat Riemannian manifold, where the Weyl curvature tensor decays as  $r^{-2-\epsilon}$  at infinity and the Cotton-York tensor decays as  $r^{-3-\epsilon}$  (with  $\epsilon > 0$), is obtained via stereographic projection from some compact $C^0$-class Riemannian manifold.
	\end{theorem}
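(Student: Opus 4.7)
The plan is to build a one-point conformal compactification of the asymptotically flat end via inversion, then upgrade it to a $C^0$ Riemannian structure using the decay of the conformally invariant curvatures. I would proceed in three main steps.

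First, I would set up the inversion. In asymptotically flat coordinates $(x^i)$ on $M \setminus K$ (with $K$ compact), write $g_{ij} = \delta_{ij} + h_{ij}$ with $h$ decaying at infinity. The inversion $\psi(y) = y/|y|^2$ maps a punctured neighborhood $B_{1/R} \setminus \{0\}$ onto $\R^n \setminus \bar{B}_R$. Since $\psi^*(\delta) = |y|^{-4} \delta$, the natural candidate for the compactified metric is $\bar g := |y|^4 \psi^* g$, which extends continuously at $y = 0$ to the Euclidean metric. The compactification is $\bar M := M \cup \{\infty\}$ with $\infty$ corresponding to $y = 0$; what remains is to exhibit a representative in the conformal class of $g$ that extends across $\infty$ as a $C^0$ metric in a coordinate-independent way.

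Second, I would translate the decay assumptions into decay of the conformal curvatures at $\infty$. The Weyl tensor (as a $(1,3)$-tensor) and the Cotton tensor are conformal invariants of definite weight, so the hypotheses $|W|_g = O(r^{-2-\epsilon})$ and $|C|_g = O(r^{-3-\epsilon})$ transfer, after inversion and the $|y|^4$ rescaling, into bounds of the form
\begin{align*}
    |\bar W|_{\bar g} = O(|y|^\epsilon), \qquad |\bar C|_{\bar g} = O(|y|^\epsilon)
\end{align*}
on a punctured neighborhood of $0$, so both conformal curvatures tend to zero at a definite Hölder rate as one approaches $\infty$ in the compactification.

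Third, I would produce a conformal factor $u$, continuous across $0$, such that $\hat g := e^{2u} \bar g$ is the desired compact metric. Using the Cotton transformation law $\hat C_{ijk} = \bar C_{ijk} + \bar W_{ijkl} \nabla^l u$ (the second term dropping out when $n=3$) together with the analogous Schouten transformation law, the function $u$ satisfies a second-order elliptic system whose source involves $\bar W$ and $\bar C$. The decay bounds from Step~2 give integrable right-hand sides; elliptic regularity together with a removable singularity theorem for a Laplace-type operator at the puncture then produces $u \in C^0$ in a full neighborhood of $0$. Pulling $\hat g$ back through $\psi$ yields the required compact $C^0$-Riemannian metric on $\bar M$, with $(M,g)$ related to it by stereographic projection from $\infty$.

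The main obstacle is the bookkeeping of conformal weights in the third step: one must verify that the thresholds $\epsilon > 0$ on $W$ and $C$ are exactly what is needed to obtain integrable sources for the PDE and a continuous (rather than only bounded) solution $u$ at the puncture. Dimension $3$ requires a separate treatment since Weyl vanishes identically and the Cotton--York tensor is the sole conformal obstruction, while in dimension $\ge 4$ one must carefully handle the feedback term $\bar W_{ijkl} \nabla^l u$. A secondary difficulty is fixing a canonical gauge for $u$ (for instance normalizing $u(0)=0$ and $\nabla u(0)$) so that the compact manifold $\bar M$ and its conformal structure are independent of the initial choice of asymptotic coordinates.
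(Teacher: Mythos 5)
This theorem is a \emph{cited} result (Theorem~C of Herzlich's 1997 paper), not one the paper proves. The paper's own description of Herzlich's proof reads: ``The main idea of the proof of the above result is to study the conformal connections introduced by Cartan in 1924. He proves that the conformal fiber bundle does not degenerate at infinity, and is actually H\"older-continuous.'' That is, Herzlich works with the normal Cartan connection on the conformal frame bundle (or, equivalently, the tractor connection), regards the Weyl and Cotton tensors as its curvature components, and uses their decay to show that a development map into the flat M\"obius model extends H\"older-continuously across infinity. This is a holonomy/ODE-along-paths argument in a principal bundle. Your proposal instead solves an elliptic PDE for a scalar conformal factor. These are genuinely different routes.

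Beyond the structural difference, your Step~3 has a real gap. You claim that ``the function $u$ satisfies a second-order elliptic system whose source involves $\bar W$ and $\bar C$,'' but no such system is exhibited, and the Cotton transformation law you quote, $\hat C_{ijk} = \bar C_{ijk} + \bar W_{ijkl}\nabla^l u$, is \emph{first order} in $u$; it is an underdetermined constraint, not an elliptic equation. The Schouten transformation law does give a second-order equation if one prescribes, say, $\tr \hat P$, but then one is solving a Yamabe-type problem, and nothing in your sketch explains what is being prescribed, why it admits a solution on the punctured ball, or why the decay exponents $\epsilon$ on $W$ and $C$ are precisely what is needed for the resulting $u$ to be continuous at the puncture. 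That bookkeeping is the entire content of the theorem and cannot be relegated to ``the main obstacle.''

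There is also an internal tension in Steps~1 and~3. In Step~1 you assert that $\bar g := |y|^4\psi^*g$ ``extends continuously at $y=0$ to the Euclidean metric.'' If the ALE order $\tau$ is taken to be positive, as in the paper's Definition~4.1, this is indeed automatic: $\bar g_{ij}=\delta_{ij}+O(|y|^\tau)$. But then a $C^0$ compactification already exists and Step~3 is vacuous, and the hypotheses on $W$ and $\Cot$ play no role. Herzlich's theorem is not vacuous precisely because its input on the metric decay is weak (or because the target regularity is $C^{0,\alpha}$ rather than bare $C^0$); the decay of the conformally invariant curvatures is what replaces the missing metric decay. Your proof proposal should either explain why the naive inversion fails (so that Step~3 is necessary) or acknowledge that under the stated hypotheses the result is immediate from Step~1 alone. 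As written, the proposal does neither: Step~1 claims victory and Step~3 then re-derives what was already claimed, via a PDE that is never written down.

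If you want a PDE-style argument in this vein, the model to follow is the paper's own \Cref{th:Huber4D}: there the conformal representative is chosen by minimizing a curvature energy (essentially $\int J^2$ plus a volume term) in the conformal class, and the Euler--Lagrange equation $\lap_{g_0}J^{g_0}=1$ is a genuine fourth-order scalar equation in $u$ whose ellipticity and coercivity are verified. Your sketch gestures at something of this kind but never produces the equation, the energy, or the removable-singularity statement it invokes.
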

	The main idea of the proof of the above result is to study the conformal connections introduced by Cartan \cite{cartan1924} in 1924. He proves that the conformal fiber bundle does not degenerate at infinity, and is actually a Hölder-continuous bundle. The underlying analysis is based on the study of elliptic equations on weighted Sobolev spaces. Considering an inversion, this theorem can be translated to give an extension to a metric defined on $\B\setminus \{0\}$ and having finite volume.\\
	
	Our main motivation is to develop a similar result that would be applicable to variational problems such as the study of compactness properties of Bach-flat metrics or generalizations of Willmore surfaces in dimension 4. In these questions, the setting is usually a sequence of manifolds which converges strongly everywhere except at a finite number of points and one would like to understand the behaviour of these manifolds across these singularities. This is the so-called bubbling convergence which usually follows from an $\ve$-regularity property. For 2-dimensional problems see for instance the works \cite{bernard2014,laurain2014,dalio2023} and the references therein. In  dimension 4, see for instance in the study of Einstein metrics or Bach-flat metrics, Anderson \cite{anderson1990,anderson2005} and Tian--Viaclovksy \cite{tian2005,tian2008} proved that the only possible singularities occurring along a sequence of bounded energy are orbifold singularities, see also \cite{biquard2014}. \\
	
	Our main contribution is to change the nature of the assumptions of \Cref{th:Herzlich} into $L^p$-bounds on various curvature tensors, and to consider our initial manifold to be a ball minus its center or slight generalizations (in other words, we assume that the topology of the underlying manifold is already known). In this article, we focus on $L^p$-bounds on the Bach tensor, namely we consider a metric $g$ that is $C^{\infty}$ on $\B^4\setminus \{0\}$ with Riemann tensor in $L^2(\B^g,g)$ and Bach tensor in $L^p(\B^4,g)$ for some $p>1$. As opposed to \Cref{th:Herzlich}, we do not assume any pointwise estimate on the metric near the singularity. Moreover, as though the extension of the metric across the singularity in \Cref{th:Herzlich} can be guessed using the asymptotic behaviour $g_{ij}(z)\ust{z\to \infty}{\sim} \delta_{ij}$, we do not have any a priori candidate of an extension that would satisfy better regularity properties in our setting. Therefore, we need to rely on the analysis of the partial differential equations induced by the assumption that the Bach tensor lies in $L^p$. We prove the following result. 
	
	\begin{theorem}\label{th:global_Huber_4D}
		Consider a $C^{\infty}$ orbifold metric $g$ on $X\coloneq C(\s^3/\Gamma)\setminus \{0\}$\footnote{In other words, the pull-back of the metric $g$ is $C^{\infty}$ on $\B^4\setminus \{0\}$, and does not a priori extends smoothly to $\B^4$, which would provide a smooth orbifold metric.}, where $\Gamma$ is a finite subgroup of $\SOo{4}$ and $C(\s^3/\Gamma)$ is the cone over $\s^3/\Gamma$. Assume that $g$ satisfies the following assumptions:
		\begin{enumerate}
			\item\label{hyp:finite_vol} The volume of $X$ is finite: $\vol_g(X)<+\infty$.
			\item\label{hyp:integrability} We have $\Riem^g\in L^2(X,g)$ and  $B^g\in L^p(X,g)$.
			\item\label{hyp:Sobolev} There exists a constant $\gamma_S>0$ such that the following Sobolev inequality holds:
			\begin{align*}
				\forall u\in C^{\infty}_c(X),\qquad
				\|u\|_{L^4(X,g)}^2 \leq \gamma_S \int_{X} |\g u|^2_g \, d\vol_g.
			\end{align*}
			\item\label{hyp:Laplacian} There exists a constant $\gamma_L>0$ such that the following regularity estimate holds. For any $u\in C^{\infty}_c(X\setminus \{0\})$, it holds
			\begin{align*}
				\|\g u\|_{L^4(X,g)} \leq \gamma_L \|\lap_g u\|_{L^2(X,g)} .
			\end{align*}
		\end{enumerate}
		There exists a constant $\ve>0$ depending only on $\vol_g(X),\gamma_S,\gamma_L$ such that the following holds. Assume that
		\begin{align*}
			\|\Riem^g\|_{L^2(X,g)} \leq \ve.
		\end{align*}
		Then there exist a diffeomorphism $\psi\colon X\to X$, a conformal factor $u\in C^{\infty}(X)$, and a $W^{2,2p}$ orbifold metric $h$ on $C(\s^3/\Gamma)$ which is smooth on $X$ such that
		\begin{align*}
			\psi^*g = e^{2u}h.
		\end{align*}
	\end{theorem}
	
	To the best of the authors' knowledge, this is the first time that a background metric is constructed in a generic manner in the context of Huber's theorem in dimension higher than 2. We can separate the assumptions of \Cref{th:global_Huber_4D} into two categories: on the one hand \eqref{hyp:finite_vol} and \eqref{hyp:integrability} are natural geometric assumptions, on the other hand \eqref{hyp:Sobolev} and \eqref{hyp:Laplacian} are analytical assumptions which permit to transform the geometric assumptions into estimates. These analytical assumptions are satisfied on any "reasonable" manifold, for instance, after compactification of an asymptotically flat manifold. We discuss them in \Cref{sec:main}.  Moreover, they are all satisfied by a $W^{2,2p}$-metric, which implies that our assumptions are minimal in some sense. As a consequence of \Cref{th:global_Huber_4D}, we prove the following version of Herzlich's result. The second part can also be considered as an improvement of $\ve$-regularity of Bach-flat metric of Tian and Viaclovsky, see Theorem 1.1 in \cite{tian2005}.
	
	\begin{corollary}\label{cor:AF}	
		Let $(M^4,g)$ be an asymptotically flat Riemannian manifold with the Riemann tensor in $L^2(M)$ and the Bach tensor in $L^p(M)$ for some $p>1$. Then $(M,g)$ is obtained via stereographic projection from a compact $W^{2,2p}$-class Riemannian manifold (and in particular the $C^{0,\alpha}$-class for some $\alpha>0$ depending on $p$). In particular, if $(M,g)$ is Bach-flat, it can be obtain via stereographic projection from a compact $W^{2,2p}$-class Riemannian manifold for any $p>1$, and therefore also $C^{0,\alpha}$ for any $0<\alpha<1$. 
	\end{corollary}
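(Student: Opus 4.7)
The plan is to reduce the corollary to \Cref{th:global_Huber_4D} by compactifying the asymptotically flat end via an inversion. If $(M,g)$ is asymptotically flat, one writes the end as $\R^4\setminus \overline{B_1}$, and the inversion $x\mapsto y=x/|x|^2$ transforms it into $B_1\setminus\{0\}$. The pulled-back metric $\tilde g$ is then a smooth metric on a punctured ball $B_\rho\setminus\{0\}$. In the asymptotically flat case, the cone of \Cref{th:global_Huber_4D} is $\R^4$ itself (corresponding to $\Gamma=\{e\}$), so $\tilde g$, possibly extended smoothly away from the inverted end, fits the setting $X = \R^4\setminus\{0\}$ of the theorem.

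The core task is to verify the hypotheses of \Cref{th:global_Huber_4D} for $\tilde g$ near $0$. In dimension $4$ the integrand $|\Riem|^2\,d\vol$ is pointwise conformally invariant, and the inversion is conformal, so $\Riem^g\in L^2$ immediately yields $\Riem^{\tilde g}\in L^2$. Moreover, the global integrability implies that both $\|\Riem^{\tilde g}\|_{L^2}$ and $\vol_{\tilde g}$ become arbitrarily small when restricted to a sufficiently small ball around $0$, achieving the smallness condition of the theorem. The Bach tensor in dimension $4$ is conformally covariant, $B^{e^{2u}g}=e^{-2u}B^g$, hence transforms by a pure power of $|y|$ under inversion, and the AF decay of $g$ to the flat metric $\delta$ transfers $B^g\in L^p$ into $B^{\tilde g}\in L^p$ near $0$. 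Because $\tilde g$ near $0$ is a controlled perturbation of the flat model, the analytical hypotheses \ref{hyp:vol_doubling}--\ref{hyp:Hessian} (doubling, Sobolev, Laplacian, and Hessian inequalities) all follow from the corresponding classical statements for $\delta$, with uniform constants.

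Applying \Cref{th:global_Huber_4D} produces a diffeomorphism $\psi$ and a $W^{2,2p}$-orbifold metric $h$ defined on the full closed ball $\overline{B_\rho}$ such that $\psi^*\tilde g = e^{2u} h$, exhibiting the one-point compactification of the end. Gluing this extension with the smooth compact core of $M$ produces the compact manifold $\overline M$, and $h$ defines a $W^{2,2p}$ Riemannian metric on it whose pull-back by stereographic projection recovers $g$ up to a conformal factor. The claimed Hölder regularity then follows from the Sobolev embedding in dimension $4$: for $p\in(1,2)$ one has $W^{2,2p}\hookrightarrow C^{0,\alpha}$ with $\alpha=2-2/p>0$, which depends explicitly on $p$; when $B^g=0$ one may take $p=2$ and invoke $W^{2,4}\hookrightarrow C^{0,\alpha}$ for every $\alpha\in(0,1)$, yielding the sharper second statement. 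The main obstacle is the verification of the analytical hypotheses on $\tilde g$, in particular the Sobolev, Laplacian, and Hessian inequalities on a neighborhood of the singular point, which require quantifying the perturbation of $\tilde g$ with respect to the flat model in terms of the AF decay rate.
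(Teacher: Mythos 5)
Your overall strategy matches the paper's: compactify the end by an inversion and apply \Cref{th:global_Huber_4D} to the resulting punctured ball. However, several of the reductions you state as immediate are actually false or require substantial work, and these are precisely the places where the paper does something nontrivial.

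First, the compactification. After the inversion $\iota:x\mapsto x/|x|^2$, the pull-back $\iota^*g$ satisfies $(\iota^*g)_{ij}(x)=|x|^{-4}\big(\delta_{ij}+O(|x|^{\tau})\big)$; it blows up at the origin, and $\vol_{\iota^*g}(\B_\rho\setminus\{0\})=\infty$, so it does not ``fit the setting'' of the theorem nor is it a ``controlled perturbation of the flat model.'' The paper instead sets $h:=|x|^4\iota^*g$, which does extend to the flat metric at $0$. You appear to conflate the isometric pull-back with this further conformal rescaling; both steps are needed, and only the second one introduces conformal transformation of curvature.

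Second, the assertion that ``$|\Riem|^2\,d\vol$ is pointwise conformally invariant'' is wrong. In dimension $4$ only the Weyl part $|W|^2\,d\vol$ is conformally invariant; the Schouten contribution to $|\Riem|^2$ is not. Under the rescaling $h=e^{2u}\tilde h$ with $\tilde h=\iota^*g$ and $e^{2u}=|x|^4$, the Schouten tensor picks up terms built from $\Hess_{\tilde h}(2\log r)$ and $d(2\log r)\otimes d(2\log r)$, as in \eqref{eq:conf_change_Schouten}. The paper verifies explicitly that these extra terms are bounded near $0$, and combines this with the conformal invariance of $|W|^2\,d\vol$ to conclude $\Riem^h\in L^2$. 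This computation is not ``immediate.''

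Third, and most importantly, the claim that ``$B^g\in L^p$ transfers into $B^{\tilde g}\in L^p$'' is false: the $L^q$ norm of the Bach tensor is conformally invariant only for $q=1$, since under $h=e^{2u}\tilde h$ one has
\begin{align*}
\int |B^h|^q_h\,d\vol_h = \int e^{4u(1-q)}\,|B^{\tilde h}|^q_{\tilde h}\,d\vol_{\tilde h},
\end{align*}
and $e^{4u(1-q)}=|x|^{8(1-q)}$ blows up at the origin for $q>1$. The paper only recovers $B^h\in L^{p'}$ for a \emph{strictly smaller} exponent $p'<\frac{3p}{1+2p}$, so one applies \Cref{th:Huber4D} with this degraded $p'$, not with the original $p$. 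Consequently, the Hölder exponent is governed by $p'$: the regularity is $W^{2,2p'}(\B)\hookrightarrow C^{0,\alpha}(\B)$ with $\alpha=2-2/p'$, not $2-2/p$, and for general Bach tensors this limits $\alpha$ well below $1$. Only in the Bach-flat case, where $B=0$ lies in every $L^q$, can one take $p'=2$ and obtain $C^{0,\alpha}$ for all $\alpha<1$, as both you and the paper conclude. The verification of the remaining analytic hypotheses for the rescaled metric $h$ via comparison with $\delta$ is essentially correct and coincides with the paper.
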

	
	\Cref{th:global_Huber_4D} also applies to the extrinsic setting. Indeed, by Gauss--Codazzi equations, we know that the Riemann tensor is quadratic in the second fundamental form of the given immersion. Thus, the Bach tensor depends on the second derivatives of the second fundamental form. We obtain the following result, which can be understood as a study of branch points of immersions having a second fundamental form in $W^{2,\frac{4}{3}+\ve}$ for some $\ve>0$.
	\begin{theorem}\label{th:immersions}
		Let $\ve\in(0,\frac{1}{10})$ and $d\geq 5$ be an integer. There exists $\delta>0$ depending only on $d$ and $\ve$ such that the following holds. Consider $\Phi\colon \B^4\setminus \{0\}\to\R^d$ be a smooth immersion such that its second fundamental form $\II$ satisfies
		\begin{align*}
			\int_{\B^4} |\g^2\II|^{\frac{4}{3}+\ve}_{g_{\Phi}} + |\g\II|^{2+\ve}_{g_{\Phi}} + |\II|^{4+\ve}_{g_{\Phi}} +1\, d\vol_{g_{\Phi}} <+\infty.
		\end{align*}
		Assume that
		\begin{align*}
			\int_{\B^4}  |\II|^4_{g_{\Phi}}\, d\vol_{g_{\Phi}} <\delta.
		\end{align*}
		Then, there exist a diffeomorphism $f\in \Diff(\B^4\setminus \{0\})$, a conformal factor $u\in C^{\infty}(\B^4\setminus \{0\})$ and a metric $h$ on $\B^4$ of class $W^{2,2p}(\B^4)$ for some $p=p(\ve)>1$ such that $g_{\Phi\circ f} = e^{2u} h$.
	\end{theorem}

	The main idea behind the proof of \Cref{th:global_Huber_4D} is to consider the tractor bundle and find a Coulomb gauge as it can be done for Yang--Mills connections by minimizing the $L^2$-norm of the connections of this bundle. Indeed, Korzyński and Lewandowski \cite{korzy2003} in 2003 (see also \cite{gover2008} for the formalism of tractor calculus) proved that Bach-flat metrics correspond to the conformal class whose tractor bundle is Yang--Mills. The heuristic is that if the curvature of a bundle is regular across the singularity, then one should be able to extend the bundle itself across the singularity. Starting from this, one can expect to find an equivalent formulation for the Coulomb gauge introduced by Uhlenbeck \cite{uhlenbeck1982}, see also the survey of Rivière \cite{riviere2020}. As a first step before the 4-dimensional case, we develop this idea in the 2-dimensional situation.\\
	
	In 2-dimensional problems, Coulomb frames have been introduced by Hélein to prove the regularity of harmonic maps \cite{helein1991}. He later proved in \cite{helein2002} that these frames could be used to construct conformal coordinates for surfaces with $L^2$ second fundamental form, see also the recent survey \cite{lan2025}. The first result of this paper constitutes a version of \Cref{huber2} where the initial regularity assumption on the Gauss curvature is no more $L^1$ but merely lies in a negative Sobolev space. In particular, we have obtain the following version of Huber's theorem, which is a consequence of the local version stated in \Cref{th:Huber2D}.
	
	\begin{theorem}\label{th:global_Huber_2D}
		Consider $\Sigma$ a closed Riemann surface and a finite number of distinct $p_1,\ldots,p_J\in \Sigma$. Let $g$ be a smooth metric on $\Sigma\setminus \{p_1,\ldots,p_J\}$ such that near every $p_j$, there exist coordinates on $\D\setminus \{0\}$, where $p_j$ is sent to $0$ and such that
		\begin{enumerate}
				\item There exists a coframe $(\theta^1,\theta^2)$ for $g$, with degree zero\footnote{See \Cref{sec:examples} for a discussion on this condition.}, such that its connection form $\theta_{12}$ lies in $L^{(2,\infty)}(\D)$ for the Euclidean metric.
			
			\item There exists $m\in \R$ such that 
			\begin{align*}
				K_g\, \sqrt{\det (g_{ij})} \in m\, \delta_0 + W^{-1,2}(\D).
			\end{align*}
			
			\item There exists $C>0$ and $\delta\in(0,1)$ such that for any $0<r<\frac{1}{2}$, it holds
			\begin{align*}
				\vol_g\left( \D_{2r}\setminus \D_r \right) \leq C\, r^{2-2m-\delta},
			\end{align*}
			where $\D_r$ and $\D_{2r}$ denote the disks in $\C$ centred at the origin with radius $r$ and $2r$ respectively.
			
			\item There exists $\Lambda_{\max}>0$ and a map $\Lambda\colon \D\setminus \{0\}\to (0,\Lambda_{\max})$ such that for every $x\in \D\setminus \{0\}$ and every ellipsoid $E\subset \D\setminus \{0\}$ centred at $x$, it holds
			\begin{align*}
				\vol_g(E) \leq \Lambda(x)\, \ell_g(\dr E)^2.
			\end{align*}
			We have the integrability property
			\begin{align*}
				\int_{\D} \frac{( (4\pi\Lambda(x))^2-1)^2 }{|x|^2}\, dx <+\infty.
			\end{align*}
		\end{enumerate}

		Then, there exist $\alpha_1,\ldots,\alpha_J\in \R$, a diffeomorphism $\psi\colon \Sigma\setminus\{p_1,\ldots,p_J\}\to \Sigma\setminus \{p_1,\ldots,p_J\}$, a smooth metric $h$ on $\Sigma$ with constant curvature and a function $\vp \in  W^{1,2}(\Sigma,h)\cap C^{\infty}(\Sigma\setminus \{p_1,\ldots,p_J\})$ and 
		\begin{align*}
			\psi^*g = \dist_h(p_1,\cdot)^{2\alpha_1} \cdots  d_h(p_J,\cdot)^{2\alpha_J}\, e^{2\vp}\, h.
		\end{align*}
		Furthermore, $\vp$ is a weak solution to the Liouville equation
		\begin{align*}
			-(\lap_h \vp)\, d\vol_h = \sum_{j=1}^J 2\pi\, \alpha_j\,  \delta_{p_j} + K_{	\psi^*g} \, d\vol_{\psi^*g} - K_h\, d\vol_h \qquad \text{ in }\Dr'(\Sigma,h).
		\end{align*}
	\end{theorem}
	
	The necessity of the assumption is discussed in \Cref{sec:examples}. Our starting point for \Cref{th:global_Huber_2D} is the fact that the 2-form $K_g\, d\vol_g$ is given by the exterior derivative of the connection form of any coframe. This provides a definition of the Gauss curvature in a negative Sobolev space in \Cref{sec:def_K}, which constitutes the main difference with the classical Huber theorem. The assumption (2) is more about the 2-form $K_g\, d\vol_g$ than on $K_g$ itself, allowing it to lie in a negative Sobolev space but also imposing the condition that across the singularity, it has to be given by a Dirac mass. A crucial idea to find good conformal coordinates, notably used in the proof of Theorem~\ref{th:imm}, is to produce a Coulomb frame. Our hypothesis (1) ensures a minimal regularity of the starting frame. On the other hand, we have replaced the completeness assumption in \Cref{huber2} by (3), a compatible growth of the area with singularity behaviour, which is made necessary by Example (d) in \Cref{sec:examples}. Finally, it seems necessary to have some control of the local isoperimetric constant (4) to conclude, see also \cite{LeguilRosenberg} for similar assumption. The assumptions (1) and (2) arise naturally when one considers sequences of surfaces with $L^2$-bounded second fundamental form, but to obtain (4) we would need at least an $L^p$-bounded second fundamental form for $p>2$, see \cite{brendle2021}, which is not a conformally invariant assumption. But beyond the result, what is more interesting is the proof, where thanks to these analytic assumptions, we are able to begin the search for a Coulomb frame which will give rise to the conformal coordinates and to a conformal factor that, up to a standard $z^n$ singularity, extends to a $W^{1,2}$ function and even a continuous fonction if $K_g\, \sqrt{\det (g_{ij})} \in m\, \delta_0 + \mathcal H^1_{loc}(\D)$. This is the main idea behind our four dimensional result, as explain now\footnote{An interesting observation following from the comparison between the 2-dimensional and the 4-dimensional cases is that the factor $|z|^{\alpha}$ arises in dimension 2 only because $\B^n\setminus \{0\}$ is not simply connected for $n=2$, but is simply connected for $n=4$.}.\\

	Coming back to the four dimensional case, one could also try to find a preferred frame that would provide the conformal coordinates with respect to a preferred metric. However, finding the right energy on frames to consider is unclear since minimizing the connection forms does not seem to produce an integrable frame because of the Weyl obstruction. Instead, we consider the tractor bundle as explained previously, and find an equivalent formulation for the Coulomb gauge. The condition for a connection $D=d+A$ over some bundle on the unit ball $\B^4$ to be a Coulomb gauge is the condition $d^*A=0$. This system turns out to be the Euler--Lagrange system of the functional $\int_{\B} |A|^2$. In the case of the tractor bundle, the connection is mainly given by the Schouten tensor. We refer to \cite{curry2018,vyatkin} for an introduction to tractor calculus. Thanks to Chern--Gauss--Bonnet theorem in dimension 4, conformal variations of the $L^2$-norm of the Schouten tensor are equivalent to conformal variations of the $L^2$-norm of its trace. Hence, our problem of finding an optimal metric reduces to minimizing the $L^2$ norm of the scalar curvature in the conformal class of our given metric, see \Cref{strategy} for more details. \\
	
	From a purely PDE view-point, the proof of \Cref{th:global_Huber_4D} can be described as follows. One can understand the Bach tensor as the given data in a system satisfied by the Schouten tensor:
	\begin{align*}
		\lap_g \Sch^g  = B^g + \Hess_g J^g + \Riem^g\star \Sch^g,
	\end{align*}
	where $J^g= \tr_g\, \Sch^g$ and the term $\Riem^g\star \Sch^g$ stands for a linear combination of various contractions of the Riemann tensor $\Riem^g$ and the Schouten tensor. Tracing this system provides no information since the Bach tensor is traceless (this is a consequence of the conformal invariance). Roughly, this means that the Bach tensor is actually the data in a system satisfied only by the traceless part of the Schouten tensor. In order to recover an information on the full tensor, one needs to prescribe the trace part $J^g$. In our setting, we work on degenerating annuli $\B\setminus \B_r$, with $r\to 0$. A first idea would be to solve a Yamabe problem (without caring too much about the boundary conditions) in order to find a metric $g_0 =e^{2u}g$ such that $J^{g_0}$ is constant. Then, the analysis would essentially be contained in the work of Tian and Viaclovsky \cite{tian2005,tian2008}. However, the resolution of such a problem requires some additional assumptions such as having a positive Yamabe invariant, see for instance \cite{escobar1994,marques2007} and \Cref{rk:J_cst}. In some sense our $\ve$-regularity, \Cref{pr:eps_regp} comes as a generalization of the one of Tian--Viaclovsky, see \Cref{rk:notcsc}. In order to provide a general approach, one needs to solve an equation of higher order (an equation having the same number of derivative as $\lap_g \Sch^g$) and this is the reason of the assumptions \ref{hyp:Laplacian} in \Cref{th:global_Huber_4D}. Our approach provides a metric $g_0=e^{2u} g$ which satisfies $\lap_{g_0} J^{g_0} = 0$ and preserves a bound on the volume of $g_0$. Thus, $J^{g_0}$ is as regular as authorized by the geometric setting and the regularity of the metric $g_0$ is now fully determined by the Bach tensor $B^{g_0}=e^{-2u} B^g$. An interesting observation is that the $Q$-curvature of the metric $g_0$ now satisfies the pointwise bound $|Q^{g_0}|\leq C|\Riem^{g_0}|^2_{g_0}$ and thus, lies in $L^p(X,g_0)$ with $p>1$.\\
	
	This strategy applies only to the dimension 4 since the Bach tensor is conformally invariant in dimension 4 and the Chern--Gauss--Bonnet formula is valid only in even dimension. However, a similar approach might be fruitful in higher dimension by considering the Fefferman--Graham obstruction tensors, see \cite{fefferman1985}. We also believe that this approach could be used in the same spirit as Coulomb gauges for Willmore surfaces \cite{bernard2014,laurain2018,martino2023}, harmonic maps \cite{laurain2014} and Yang--Mills connections \cite{gauvrit2024,riviere2020,uhlenbeck1982}, in order to study the regularity and compactness properties of Bach-flat metrics. Indeed, we are able to provide (at least locally) a conformal metric satisfying an $\ve$-regularity property. It would be interesting to know whether we can solve $\lap_{g_0} J^{g_0}=0$ globally.\\
	
	It seems not possible to reduce the assumption $B^g\in L^p$ for $p>1$ to $p=1$. It would be interesting to look for possible counter-examples. Many directions of generalizations of \Cref{th:global_Huber_4D} can be considered. First, one can study the higher regularity, as in \cite{Her97}, were we assume the Bach tensor to be in $W^{k,p}$ up to the singularity in some coordinates, and deduce that the background metric belongs to $W^{k+2,2p}$. \Cref{th:global_Huber_4D} can be applied to Bach-flat metrics, but also the potential singularities arising along the Bach flow \cite{bahuaud2011,chen2023} since it is valid for a Bach tensor in $L^2$. Another important direction, would be to integrate by parts the Bach tensor in order to obtain conditions on the Cotton tensor (which would ask for one derivative less). In particular, the same conclusion might still hold if instead of assuming $B^g\in L^p$, we assume $\Cot^g\in L^{\frac{4p}{4-p}}$ for some $p>1$.
	
	\subsubsection*{Structure of the article:}	In \Cref{sec:Huber2D}, we prove \Cref{th:global_Huber_2D}. In \Cref{sec:Huber4D}, we prove \Cref{th:global_Huber_4D}. In \Cref{sec:asympt_flat}, we discuss the applications of \Cref{th:global_Huber_4D} to asymptotically flat manifolds, and in particular the proof of \Cref{cor:AF}. In \Cref{sec:immersion}, we prove \Cref{th:immersions}.

	\subsubsection*{Acknowledgements:}
	The authors would like to thank Rod Gover, Gilles Carron and Tristan Rivière for their expertise and important discussions. The second author is supported by Swiss National Science Foundation, project SNF 200020\textunderscore 219429.  
	
	\section{Huber Theorem for surfaces}\label{sec:Huber2D}
	
	\textit{Notations:} Given $r>0$, we denote $\D_r$ the Euclidean disk of $\C$ of radius $r$ centred at the origin. We denote $\D=\D_1$.
	We denote $\xi = (dx)^2 + (dy)^2$ the flat Euclidean metric.
	
	\subsection{Gauss curvature and moving frames.}\label{sec:examples}
	Before coming to the main result of this section, let us give the definition of the Gauss curvature in the moving frame approach, see \cite{doCarmo} for details. Let $g$ a smooth metric on $\D$, and $(E_1,E_2)$ a smooth orthonormal moving frame (obtain through Gramm--Schmidt algorithm for instance), then we consider the dual frame $\omega_1,\omega_2$ which gives  the smooth one form $\omega_{12}$ (the connection form) defined by\footnote{It is the unique one form such that $d\omega_1=\omega_{12}\wedge \omega_2$ and $d\omega_2=-\omega_{12}\wedge \omega_1$ .} 
	\begin{equation}
		\label{connection} 
		\omega_{12}=d\omega_1(E_1,E_2)\, \omega_1+d\omega_2(E_1,E_2)\, \omega_2.
	\end{equation}
	Then we have 
	\begin{equation}
		\label{Liouville}
		d\omega_{12}=-K_g \, \omega_1\wedge \omega_2=-K_g\, d\vol_g.
	\end{equation}
	Hence for a smooth metric $g$ on $\D\setminus \{0\}$, we perform the same construction but $K_g\, d\vol_g$ will be defined as $-d\omega_{12}$ in the sense of distributions across the origin. In particular there will be some possible Dirac mass at the origin. We list below a few examples.\\
	
	\textit{Example $(a)$, the flat case:} $g=\xi $.\\
	We choose $(E_1,E_2)= \left(\frac{\partial}{\partial x},\frac{\partial}{\partial y}\right)$, then it holds $\omega_{12}=0$ and $K_g \, d\vol_g=0$ on $\D$.\\
	
	Despite the simplicity of this example, we can introduce a first subtility. For any choice of $E_1$ on $\D$, its degree on $\partial \D$ is always $0$. However, this is not the case anymore on $\D\setminus\{0\}$. We will see that, in distributional sense, $d\omega_{12}$ depends on the frame because of the topology of $\D\setminus\{0\}$. \\

	\textit{Example $(a)bis$:}\\
	Let us consider $(E_1,E_2)= \left(\frac{\partial}{\partial r},\frac{1}{r}\frac{\partial}{\partial \theta}\right)$ which is orthonormal for $\xi$ on $\D\setminus\{0\}$. Then we easily see that $(\omega_1,\omega_2)=(dr,rd\theta)$ and $\omega_{12}=d\theta$ which gives in the distributional sense\footnote{We have $d\theta = *d(\log r)$, so that $dd\theta = d*d(\log r) = -(\lap \log r)\, dx\wedge dy = 2\pi\, \delta_0\, dx\wedge dy.$}
	$$-d\omega_{12}=-2\pi\, \delta_0\, dx\wedge dy \qquad  \text{ in } \Dr'(\D).$$
	However, the flat metric as no Dirac at the origin. To understand this, let us interpret this using the Gauss--Bonnet formula. By page 94 of \cite{doCarmo}, we have, in general, along a curve $\gamma$ which admits $X$ as a unit tangent, 
	$$k_g=\omega_{12}\left( X \right)\qquad \text{ on } \gamma.$$
	In our particular case, we obtain 
	$$k_g=\omega_{12}\left( \frac{\partial}{\partial \theta}\right) \qquad \text{ on } \partial \D.$$
	Denoting $d\sigma$ the length measure induced by $g$ on $\dr\D$, it holds 
	$$
	\int_\D -d\omega_{12}=-\int_{\partial \D} \omega_{12}=-\int_{\dr\D} k_g\, d\sigma.
	$$
	In order to recover the usual Gauss--Bonnet formula, we need to write
	$$\int_\D (-d\omega_{12}+2\pi\delta_0) +\int k_g\, d\sigma =2\pi.$$
	Therefore, we have to set 
	$$K_g\, d\vol_g	\coloneq -d\omega_{12}+2\pi\delta_0.$$
	
	More generally, if we have a first frame $(E_1,E_2)$ and a second one $(E_1', E_2')$ such that $E_1'$ on partial $\partial \D$ is a unit tangent vector, i.e. positively proportionnal	$\frac{\partial }{\partial \theta}$ and unitary for $g$. Then, from page 95 of \cite{doCarmo}, we have
	$$(\omega_{12}-\omega_{12}')=d\varphi \qquad \text{on }\dr\D,$$
	where $\varphi$ is the angle between $E_1$ and $E_1'$, that is to say the angle between $E_1$ and $\frac{\partial }{\partial \theta}$. Hence we obtain 
	$$
	\int_{\D} -d\omega_{12}=\int_{\partial \D} -\omega_{12}=-\int_{\partial \D} (\omega_{12}'+d\varphi) =-\int_{\dr\D} k_g\, d\sigma-2\pi\left(\mathrm{deg}\left({E_1}_{\vert\dr \D}\right)-1\right).
	$$
	where $\mathrm{deg}\left({E_1}_{\vert\dr \D}\right)$ is the degree of $E_1$ on $\partial \D$.
	Therefore, the general formula is 
	$$
	K_g\, d\vol_{g}=-d\omega_{12}+ 2\pi\, \deg\left({E_1}_{\vert\dr \D}\right)\, \delta_0.
	$$
	For instance, if $E_1=\frac{\partial }{\partial x}$ as in Example $(a)$, we recover the classical formula. We can also conformally modify the frame in order to have $\omega_{12}\in L^2$ by setting $(E_1,E_2)= \frac{1}{r}\left(\frac{\partial}{\partial r},\frac{1}{r}\frac{\partial}{\partial \theta}\right)$ but at the cost of having $K_{\frac{\delta}{r^2}}\, d\vol_{\frac{\delta}{r^2}}\not \in W^{-1,2}$.\\
	
	\textit{Example $(b)$, polynomial singularity:} \\
	The case where $g=n^2\vert z\vert^{2n-2} \xi$ is the pullback of the standard metric by the conformal map $f(z)=z^n$. We set  $e^{2\lambda}=n^2\vert z\vert^{2n-2}$ on $\D\setminus\{0\}$, and we  choose 
	$$
	(E_1,E_2)= \left(e^{-\lambda}\frac{\partial}{\partial x},e^{-\lambda}\frac{\partial}{\partial y}\right).
	$$
	This gives $\omega_{12}= -\lambda_y \, dx +\lambda_x \, dy=*d\lambda$, hence $\omega_{12}$ is in $L^1(\D)$, we can define its distibutionnal derivative on $\D$, and we get
	$$K_g \,d\vol_g=-d\omega_{12}=-2(n-1) \pi\, \delta_0\, dx\wedge dy.$$
	Hence, we easily check that the Gauss--Bonnet formula is valid on $\D$.\\
	
	\textit{Example $(c)$, essential singularity:} the case where $g=\vert z\vert^{-4}\, \vert e^\frac{2}{z}\vert\, \xi$ is the pullback of the standard metric by the conformal map $f(z)=e^\frac{1}{z}$. On $\D\setminus\{0\}$, we set 
	$$
		e^{2\lambda}=\vert z\vert^{-4} \vert e^\frac{2}{z}\vert = \frac{1}{|z|^4}\, e^{\frac{2}{|z|^2}\, \Re(z)}.
	$$ 
	In other words, it holds
	\begin{align}\label{eq:formula_lambda}
		\lambda = 2\log(|z|) + \frac{\Re(z)}{|z|^2}.
	\end{align}
	We then choose 
	$$
		(E_1,E_2)= \left(e^{-\lambda}\frac{\partial}{\partial x},e^{-\lambda}\frac{\partial}{\partial y}\right).
	$$
	As above, it holds $\omega_{12}= *d\lambda$, hence $\omega_{12}$ verifies $d\omega_{12}=0$ but does not lie in $L^1(\D)$. In order to define $K_g\, d\vol_g = -d\omega_{12}$ in a distributional sense, we take a test function $\vp\in C^{\infty}_c(\D)$ and need to know whether the following quantity has a limit or not as $\ve\to 0$ 
	\begin{align*}
		 \int_{\dr\D_{\ve}} \vp\, \omega_{12} = -\int_{\dr \D_{\ve}} \vp(x)\, \dr_r\lambda(x)\, dx.
	\end{align*}
	By \eqref{eq:formula_lambda}, we obtain 
	\begin{align*}
		\int_{\dr \D_{\ve}} \vp(x)\, \dr_r\lambda(x)\, dx & \ust{\ve\to 0}{=} 4\pi\, \vp(0)   + \int_{\s^1}  \left(\vp(0) +\ve \int_0^1 \dr_r \vp(\ve t\theta)\, dt \right)\, \left( \frac{-\cos(\theta)}{\ve^2} \right)\, \ve\, d\theta +o(1) \\[2mm]
		& \ust{\ve\to 0}{=} 4\pi\, \vp(0)   - \int_{\s^1} \cos(\theta) \left( \int_0^1 \dr_r \vp(\ve t\theta)\, dt \right)\, d\theta +o(1).
	\end{align*}
	Hence, the quantity $d\omega_{12}$ does not defines a distribution of order 0 at the origin.\\
	
		\textit{Example $(d)$, the annulus:} Let consider $\phi\colon \D \rightarrow \D\setminus \D_{1/2}$ given by $\phi(r,\theta)=\left(\frac{1}{2}+\frac{r}{2},\theta\right)$, then we can choose 
		$$
		(E_1,E_2)= \left(2\frac{\partial}{\partial r},\frac{1}{\frac{1}{2}+\frac{r}{2}}\frac{\partial}{\partial \theta}\right).
		$$ 
		We easily check that $\omega_{12}=d\theta$ and as in Example $(a)$, setting $K_g\,d\vol_g=-d\omega_{12}+2\pi \delta_0\equiv 0$ on $\D$, we easily check the validity of the Gauss--Bonnet formula
		$$\int_{\D} K_g\, d\vol_{g} +\int k_g\, d\sigma =2\pi.$$
		Here the underlying manifold is not an annulus with a smooth metric but a disc with a singular metric. The map $\phi$ is not conformal and we now check that this singular metric cannot be conformal to the flat one on the disk with a conformal factor in $W^{1,2}$. Indeed if there exists $\varphi \in  W^{1,2}(\D)$ such that $(\D\setminus \{0\}, e^{2\varphi}\delta)$ is isometric to the flat annulus, then, for every $\ve>0$, there exists a radius $r_{\ve}\in (\ve, 2\ve)$, such that, for every $p,q>1$ satisfying $\frac{1}{p}+\frac{1}{q}=1$, it holds
		$$ 
		\ve \int_{\partial \D_{r_{\ve}}} e^{\varphi}\, d\sigma = \int_{\D_{2\ve}\setminus \D_{\ve}} e^{\varphi} \, d\vol_{\xi}
		\leq  \left( \int_{\D_{2\ve}\setminus \D_{\ve}} e^{p\varphi}\, d\vol_{\xi} \right)^{\frac{1}{p}}\, \ve^{\frac{2}{q}}.
		$$ 
	 	By John--Nirenberg inequality (or Moser--Trudinger inequality), there will be a closed curve in the annulus enclosing $0$ (hence with diameter bounded below) with length $O(\epsilon^{1-\delta})$ for any $\delta>0$, which will be a contradiction.\\
	
	\subsection{Statement of the Huber theorem in dimension 2.}
	Examples $(a)bis$, $(c)$ and $(d)$ are the typical examples that we exclude with our assumptions. The main goal of this section is the following result.
	
	\begin{theorem}\label{th:Huber2D}
		Let $g$ be a smooth metric on $\D\setminus \{0\}$ such that
		\begin{enumerate}
			\item\label{it:initial_coframe} There exists a coframe $(\omega^1,\omega^2)$ on $\D\setminus \{0\}$ associated to a frame $(E_1,E_2)$ such that $\mathrm{deg}\left({E_1}_{\vert \partial \D_1}\right)=0$\footnote{Here $(E_1,E_2)$is the dual frame, moreover this condition is not restrictive when the frame is a limit of frame defined on the whole disc.} and its connection form $\omega_{12}$ lies in $L^{(2,\infty)}(\D)$.
			\item\label{it:K} There exists $m\in\R$ such that 
			\begin{align*}
				K_g\, \sqrt{\det g} \in m\, \delta_0 + W^{-1,2}(\D).
			\end{align*}
			
			\item\label{it:vol_growth} There exists $C>0$ and $\delta\in(0,1)$ such that for any $r\in(0,\frac{1}{2})$, it holds 
			\begin{align*}
				\vol_g(\D_{2r}\setminus \D_r) \leq C\, r^{2 - 2m-\delta}.
			\end{align*}
			
			\item\label{it:isop} There exists a function $\Lambda\colon \D\setminus \{0\}\to (0,\Lambda_{\max})$ for some $\Lambda_{\max}>0$, such that for any ellipsoid $E\Subset \D\setminus \{0\}$ centred at a point $x\in \D\setminus \{0\}$, it holds
			\begin{align*}
				\vol_g(E) \leq \Lambda(x)\, \ell_g(\dr E)^2.
			\end{align*}
			We assume that 
			\begin{align*}
				\int_{\D} \frac{((4\pi\Lambda(x))^2-1)^2}{|x|^2}\, dx <+\infty.
			\end{align*}
		\end{enumerate}
		Then, there exist an open set $\Omega\subset \R^2$ containing $0$, a $C^\infty$-diffeomorphism $\psi \colon \Omega\setminus \{0\}\to \D\setminus \{0\}$ and a function $\vp\in  W^{1,2}(\Omega)$ satisfying $\vp\in C^\infty(\Omega\setminus \{0\})$ and
		\begin{align*}
			\psi^*g = |x|^{-2m} e^{2\vp} \xi,
		\end{align*}
		where $\xi$ is the flat metric on $\D$. Furthermore, $\vp$ is a weak solution to the equation 
		$$
			-\lap_\xi \vp = -m\, \delta_0+ K_{\psi^*g} \, \sqrt{\det(\psi^*g)} \in W^{-1,2}(\D).
		$$		
	\end{theorem}
	The proof of \Cref{th:global_Huber_2D} follows by observing that \Cref{th:Huber2D} actually proves that the complex structure induced by $g$ on $\Sigma \setminus \{p_1,\dots,p_K\}$ extends smoothly to the full surface $\Sigma$. Since $\Sigma$ is closed, each complex structure is associated to a metric of constant curvature. Thus, we obtain \Cref{th:global_Huber_2D}.

	\subsection{Compatibility the hypothesis with the definition of the Gaussian curvature in a distributional way.}\label{sec:def_K}
	
	In this section, we use the notations of \Cref{th:Huber2D} and show how the Gaussian curvature in a distributional manner is coherent with  \Cref{it:initial_coframe} and \Cref{it:K}. To be as general as possible, we do not assume that the degree of $E_1$ vanishes from the begining.\\
	
	We consider the coframe $(\omega^1,\omega^2)$ given by \Cref{it:initial_coframe}. The connection form $\omega_{12}$ is the unique 1-form such that 
	\begin{align*}
		\begin{cases}
			d\omega^1 = \omega_{12}\wedge \omega^2 ,\\[2mm]
			d\omega^2 = -\omega_{12} \wedge \omega^1,
		\end{cases} \qquad \text{ in }\D\setminus \{0\}.
	\end{align*}
	Since we assume $\omega_{12}\in L^{(2,\infty)}(\D)$, for any $\ve\in(0,\frac{1}{2})$, there exists $r_{\ve}\in(\frac{\ve}{2},\ve)$ such that 
	\begin{align*}
		\int_{\dr \D_{r_{\ve}}} |\omega_{12}|_{\xi} & \leq \frac{2}{\ve} \int_{\ve/2}^{\ve} \left( \int_{\dr \D_r} |\omega_{12}|_{\xi} \right)\, dr = \frac{2}{\ve} \int_{\D_{\ve}\setminus \D_{\ve/2}} |\omega_{12}|_{\xi}.
	\end{align*}
	By Hölder inequality, we obtain 
	\begin{align}\label{eq:bound_slice}
		\int_{\dr \D_{r_{\ve}}} |\omega_{12}|_{\xi} \leq C\, \|\omega_{12}\|_{L^{(2,\infty)}(\D_{\ve}\setminus \D_{\ve/2},\xi)}.
	\end{align}
	Thus, there exist $a \in \R$ and a sequence $r_i \to 0$ such that 
	\begin{align}\label{eq:limit_slice}
		\int_{\dr \D_{r_i}} \omega_{12} \xrightarrow[i\to +\infty]{} \alpha.
	\end{align}
	Concerning the Gaussian curvature, we have 
	\begin{align}\label{eq:K_dtheta}
		-d\omega_{12} = K_g\, \omega^1\wedge \omega^2 = K_g\, d\vol_g \text{ on } \D\setminus\{0\}.
	\end{align}
	We define the Gaussian curvature as a distribution in $\Dr'(\D)$ as follows,
	$$K_g\, d\vol_g =- d\omega_{12} + 2\pi\mathrm{deg}\left({E_1}_{\vert\D}\right)\delta_0,$$
	where $d\omega_{12}$ lies in $W^{-1,(2,\infty)}(\D)$ by assumption. Hence, given $\vp\in C^{\infty}_c(\D)$, we have
	\begin{align}\label{eq:def_K}
		\scal{d\omega_{12}}{\vp}_{\Dr',\Dr}&=-\int_\D d\vp \wedge \omega_{12}\\
		&= \lim_{i\to +\infty} \left(  \int_{\dr \D_{r_i}} \vp\, \omega_{12} +
		 \int_{\D\setminus \D_{r_i}} \vp\, d\omega_{12} \right) \\
		 &=2\pi \alpha \vp(0)-  \lim_{i\to +\infty} \int_{\D\setminus \D_{r_i}} \vp (K_g\, d\vol_g)_{\vert \D\setminus\{0\}} .
	\end{align}
	Hence, by \Cref{it:K}, there exists $K\in  W^{-1,2}$, such that 
	\begin{align}
		& d\omega_{12}= 2\pi \alpha\delta_0-K \label{do12},\\[2mm]
		& K_g\, d\vol_g =2\pi \left(\deg\left({E_1}_{\vert\dr \D}\right) -\alpha \right)\delta_0+ K. \nonumber
	\end{align}
	this provides $\mathrm{deg}\left({E_1}_{\vert\dr \D}\right)-\alpha=m$. So the singularity of the Gauss curvaturde is given by the singularity of the connection of the  frame renormalised by the degree of the frame.
	In the setting of the theorem it simply gives that 
	$$m=-\alpha =\lim_{r_i \rightarrow 0} 	\int_{\dr \D_{r_i}} \omega_{12}\, \llcorner\, \dr_{\theta},$$
	hence as a by product the limit does not depends on the $r_i$ sequence.

	\subsection[Existence of a frame with connection form in L2]{Existence of a frame with connection form in $L^2(\D,g)$.}
	\label{xitog}
	
	In this section, we show that the assumptions \Cref{it:K} implies the existence of a frame orthonormal for the metric $r^{2m}g$ whose connection form lies in $L^2(\D,\xi)$, and not just in $L^{(2,\infty)}(\D,\xi)$. This implies in particular that $K_{r^{2m}g}\, \sqrt{\det(r^{2m}g)}\in W^{-1,2}(\D,\xi)$, that is to say the Gauss curvature $K_{r^{2m}g}\, d\vol_{r^{2m}g}$ is more regular than $K_g\, d\vol_g$.\\
	
	\textit{Step 1: Definition of the new coframe.}\\
	We consider the Hodge decomposition, see chapter of \cite{iwaniec}, of $\omega_{12}$ for the Euclidean metric: there exists $a\in W^{1,(2,\infty)}(\D,\xi)$, $b\in W^{1,(2,\infty)}(\D,\xi; \Lambda^2 \R^2)$ and $h\in L^{(2,\infty)}(\D,\xi;\Lambda^1\R^2)$ such that (here the operator $d^*$ is computed with respect to $\xi$)
	\begin{align*}
		\omega_{12} = da + d^* b + h \text{ on } \D,
	\end{align*}
	with 
	\begin{align*}
		d^*a=0,\, db = 0\text{ on } \D, \qquad \text{and }\qquad 
		\begin{cases} 
			d h = 0 \text{ on } \D ,\\[1mm]
			d^* h = 0 \text{ on } \D.
		\end{cases} 
	\end{align*}
	By \eqref{do12}, we have the following system on the 1-form $(d^*b-\alpha\, d\theta)\in L^{(2,\infty)}(\D)$\footnote{It holds $d^*d\theta=0$ in $\Dr'(\D)$ since for any $\vp\in C^{\infty}_c(\D)$, we have 
	\begin{align*}
		\lim_{r\to 0} \int_{\D\setminus \D_r} \scal{d\theta}{d\vp}_{\xi} = \lim_{r\to 0} \left( \int_{\dr \D_r} \underbrace{(d\theta \, \llcorner \, \nu)}_{=0} \vp - \int_{\D\setminus \D_r} \underbrace{(d^*d\theta)}_{=*dd\log (\rho) =0}\, \vp \right) =0.
	\end{align*}
	}
	\begin{align*}
		\begin{cases}
			d(d^*b +m\, d\theta) = -K=-K_g\, d\vol_g + 2 \pi \, m\, \delta_0 \in W^{-1,2}(\D), \\[3mm]
			d^*(d^*b +m\, d\theta) = m\, *dd(\log(\rho)) = 0 \qquad \text{ in }\D.
		\end{cases}
	\end{align*}
	By Gaffney's inequality, see for instance \cite{Troyanov10}, we obtain that 
	\begin{equation}
		\label{L2}
		d^*b+m\, d\theta\in L^2(\D_{1/2}).
	\end{equation}
	We consider the coframe $(w^1,w^2)$ defined by 
	\begin{align*}
		w^1 + i\, w^2 \coloneq e^{m\log(r)+ia}\, (\omega^1 + i\, \omega^2) \qquad \text{ in }\D_{1/2}\setminus \{0\}.
	\end{align*}
	In other words, we define
	\begin{align*}
		\begin{cases} 
			\displaystyle w^1 = r^{m} \left[ \cos(a)\, \omega^1 - \sin(a)\, \omega^2 \right], \\[2mm]
			\displaystyle w^2 = r^{m} \left[ \sin(a)\, \omega^1 + \cos(a)\, \omega^2 \right].
		\end{cases} 
	\end{align*}
	We have
	\begin{align*}
		dw^1 & = \frac{m}{r}\, dr\wedge w^1 + r^{m}\left[ -\sin(a)\, da\wedge \omega^1 +\cos(a)\, \omega_{12}\wedge \omega^2 - \cos(a)\, da\wedge \omega^2 +\sin(a)\, \omega_{12}\wedge \omega^1 \right] \\[2mm]
		& = m\, d(\log r)\wedge w^1 + \left[ \omega_{12} - da   \right]\wedge w^2 \\[2mm]
		& = m\, [*_g d(\log r)]\wedge w^2 + \left[ \omega_{12} - da   \right]\wedge w^2 \\[2mm]
		& = \left[ \omega_{12} -da +m\, *_g d(\log r) \right]\wedge w^2.
	\end{align*}
	As well for $w^2$, we obtain 
	\begin{align*}
		dw^2 = -\left[ \omega_{12} -da + m\, *_g d(\log r) \right]\wedge w^1.
	\end{align*}
	Hence,  it holds 
	\begin{align}
		w_{12} & = \omega_{12} -da +m\, *_g d(\log r) \nonumber \\[2mm]
		 & =(d^*b +m\, *_g d(\log r)) + h. \label{eq:good_connection}
	\end{align}
	We obtain 
	\begin{align*}
		 w^1\otimes w^1 + w^2 \otimes w^2 
		& = r^{2m}\left[ \cos(a)\, \omega^1 - \sin(a)\, \omega^2 \right]\otimes \left[ \cos(a)\, \omega^1 - \sin(a)\, \omega^2 \right] \\[2mm]
		& \qquad + r^{2m}\left[ \sin(a)\, \omega^1 + \cos(a)\, \omega^2 \right]\otimes \left[ \sin(a)\, \omega^1 + \cos(a)\, \omega^2 \right] \\[2mm]
		& = r^{2m}\left[ \omega^1\otimes \omega^1 + \omega^2\otimes \omega^2\right].
	\end{align*}
	In other words, the frame $(w^1,w^2)$ is orthonormal for the metric $r^{2m}g$.\\

	\textit{Step 2: The curvature form $w_{12}$ lies in $L^2(\D,\xi)$.}\\
	In order to prove that $w_{12}$ lies in $L^2(\D,\xi)$, we will apply \Cref{it:isop} to show in \Cref{cl:eigenvalues} that the quotient of the eigenvalues of $g$ are controlled. This will allow us in \Cref{cl:log} to show that $*_gd\log(r)$ is close to $d\theta$ in the $L^2(\D,\xi)$ topology (here is an improvement of regularity at the Lorentz scale). Then by \eqref{eq:good_connection} and \eqref{L2}, we will have that  
	$$w_{12}\in L^2(\D,\xi).$$
	Consequently, the Gauss curvature of the metric $r^{2m}g$ verifies
	\begin{align}
		K_{r^{2m}g}\, \sqrt{\det r^{2m}g} =-dw_{12}  \in W^{-1,2}(\D).
	\end{align}
	\begin{claim}\label{cl:eigenvalues}
		For any $x\in \D\setminus \{0\}$, the matrix $h^{ij}(x)\coloneq \sqrt{\det g(x)}\, g^{ij}(x)$ verifies the pointwise inequalities between matrices
		\begin{align*}
			\delta^{ij} \leq  h^{ij}(x) < (4\pi\Lambda(x))^2\, \delta^{ij}.
		\end{align*}
	\end{claim}
	
	\begin{proof}
		We fix $x\in \D\setminus \{0\}$. Let $(e_1,e_2)$ be the eigenvectors of $g_{ij}(x)$ with unit Euclidean norm. We then have 
		\begin{align*}
			\begin{cases}
				g_{ij}(x)\, e_1^i\, e_1^j = \lambda_1(x), \\[2mm]
				g_{ij}(x)\, e_2^i\, e_2^j = \lambda_2(x), \\[2mm]
				g_{ij}(x)\, e_1^i\, e_2^j =0.
			\end{cases}
		\end{align*}
		Up to exchanging $e_1$ and $e_2$, we can assume that $\lambda_1\leq \lambda_2$.	Given $\alpha>0$, we consider the open set 
		\begin{align*}
			U_{\alpha}\coloneq \left\{ x + t\, e_1 + s\, \left(\frac{\lambda_1}{\lambda_2}\right)^{\frac{1}{2}}\, e_2 : t^2 + s^2 < \alpha^2 \right\}.
		\end{align*}
		For $\alpha$ small enough, we have $U_{\alpha}\subset  \D_{|x|/2}(x)\Subset \D\setminus \{0\}$. By \Cref{it:isop}, it holds 
		\begin{align*}
			 \vol_g(U_{\alpha}) \leq \Lambda(x)\, \ell_g(\dr U_{\alpha})^2.
		\end{align*}
		As $\alpha\to 0$, we have the following asymptotic expansions. On one hand, it holds
		\begin{align*}
			\ell_g(\dr U_{\alpha}) \ust{\alpha\to 0}{=} \alpha \int_{\s^1} \sqrt{\lambda_1\, \cos(t)^2 + \lambda_1 \, \sin(t)^2}\, dt + o(\alpha).
		\end{align*}
		On the other hand, we have 
		\begin{align*}
			\vol_g(U_{\alpha}) \ust{\alpha\to 0}{=} \alpha^2\, \pi\, \sqrt{\det g(x)} + o(\alpha^2).
		\end{align*}
		Hence, we obtain 
		\begin{align*}
			\pi \sqrt{\lambda_1\, \lambda_2} \leq 4\pi^2\, \Lambda(x)\, \lambda_1 .
		\end{align*}
		Hence, we obtain $\lambda_2 \leq (4\pi\Lambda)^{2}\, \lambda_1$. Consequently, the matrix $\sqrt{\det g}\, g^{ij}$ verifies the following pointwise inequalities between matrices (assuming $\lambda_1\leq \lambda_2$)
		\begin{align*}
			\begin{cases}
				\displaystyle \sqrt{\det g}\, g^{ij} \leq  \frac{ \lambda_2 }{ \lambda_1 } \leq (4\pi\Lambda(x))^2 , \\[4mm]
				\displaystyle \sqrt{\det g}\, g^{ij} \geq \frac{\lambda_1}{\lambda_2} \geq 1.
			\end{cases}
		\end{align*}
	\end{proof}
	
	As a corollary, we obtain that the 1-form $(*_gd[\log r]-d\theta)$ lies in $L^2(\D,\xi)$.
	
	\begin{claim}\label{cl:log}
		It holds 
		\begin{align*}
			\int_{\D} \big|  *_gd[\log r]-d\theta \big|^2_{\xi} <+\infty.
		\end{align*}
	\end{claim}
	
	\begin{proof}
		It holds
		\begin{align*}
			*_gd\log(r)  = *_h\left(\frac{dr}{r}\right) = *_h\left( \frac{x\, dx + y\, dy}{x^2 + y^2} \right)  = -\left( \frac{x}{r^2}\, h^{12} + \frac{y}{r^2}\, h^{22} \right)\, dx + \left( \frac{x}{r^2}\, h^{11} + \frac{y}{r^2}\, h^{21} \right)\, dy.
		\end{align*}
		We have 
		\begin{align*}
			*_{\xi} *_gd\log(r)  & = *_{\xi}\left[ -\left( \frac{x}{r^2}\, h^{12} + \frac{y}{r^2}\, h^{22} \right)\, dx + \left( \frac{x}{r^2}\, h^{11} + \frac{y}{r^2}\, h^{21} \right)\, dy \right] \\[2mm]
			& = -\left( \frac{x}{r^2}\, h^{12} + \frac{y}{r^2}\, h^{22} \right)\, dy - \left( \frac{x}{r^2}\, h^{11} + \frac{y}{r^2}\, h^{21} \right)\, dx.
		\end{align*}
		Hence, it holds 
		\begin{align*}
			 *_{\xi} *_gd\log(r) + d\log(r) = -\left( \frac{x}{r^2}\, h^{12} + \frac{y}{r^2}\, (h^{22} - 1) \right)\, dy - \left( \frac{x}{r^2}\, (h^{11}-1) + \frac{y}{r^2}\, h^{21} \right)\, dx.
		\end{align*}
		We conclude the proof using \Cref{cl:eigenvalues} together with the integrability condition \Cref{it:isop}.
	\end{proof}
	
	Given any 1-form $\omega = \omega_i\, dx^i$, we have
	\begin{align*}
		*_g \omega = \sqrt{\det(g)}\ \left[ -\left(\omega_1\, g^{12} + \omega_2\, g^{22}\right)\, dx^1 + \left(\omega_1\, g^{11} + \omega_2\, g^{21}\right)\, dx^2 \right].
	\end{align*}
	As a consequence of \Cref{cl:eigenvalues}, we have the pointwise inequality
	\begin{align*}
		|*_g\omega|_{\xi} \leq C(\Lambda_{\max})\, |\omega|_{\xi}.
	\end{align*}
	Using the connection form introduced in \eqref{eq:good_connection}, we obtain by \Cref{cl:log}
	\begin{align*}
		\int_{\D} |w_{12}|^2_g\, d\vol_g = \int_{\D} w_{12}\wedge *_g w_{12} \leq C(\Lambda_{\max}) \int_{\D} |w_{12}|^2_{\xi} <+\infty.
	\end{align*}

	\begin{remark}
		The property $w_{12}\in L^2(\D,g)$ provides the upper bound $\vol_{r^{-2m}g}(B_{r^{-2m}g}(x,r))\leq C\, r^2$ for geodesic balls. Indeed, denoting $\tilde{g} \coloneq r^{-2m} g$ for simplicity, we have
		\begin{align*}
			\frac{d}{dr}\vol_g(B_{\tilde{g}}(x,r)) = \ell_{\tilde{g}}(\dr B_g(x,r)), \qquad \frac{d^2}{dr^2}\vol_{\tilde{g}}(B_{\tilde{g}}(x,r)) = \int_{\dr B_{\tilde{g}}(x,r)} k_{\tilde{g}}\, d\vol_{\tilde{g}}.
		\end{align*}
		Using Gauss--Bonnet theorem, we obtain 
		\begin{align*}
			\frac{d^2}{dr^2}\vol_{\tilde{g}}(B_{\tilde{g}}(x,r)) = 2\pi\chi(B_{\tilde{g}}(x,r)) - \int_{B_{\tilde{g}}(x,r)} K_{\tilde{g}}\, d\vol_{\tilde{g}} =  2\pi\chi(B_{\tilde{g}}(x,r)) + \int_{B_{\tilde{g}}(x,r)} dw_{12}. 
		\end{align*}
		Integrating by parts, we obtain 
		\begin{align*}
			\frac{d^2}{dr^2}\vol_{\tilde{g}}(B_{\tilde{g}}(x,r)) = 2\pi\chi(B_{\tilde{g}}(x,r)) + \int_{\dr B_{\tilde{g}}(x,r)} w_{12}.
		\end{align*}
		Using that $\vol_{\tilde{g}}(B_{\tilde{g}}(x,0))=0$ and $\frac{d}{dr}\vol_{\tilde{g}}(B_{\tilde{g}}(x,r))|_{r=0}=0$, we obtain
		\begin{align*}
			\vol_{\tilde{g}}(B_{\tilde{g}}(x,r)) & = \int_0^r \int_0^s \left(2\pi\chi(B_{\tilde{g}}(x,t)) + \int_{\dr B_{\tilde{g}}(x,t)} w_{12} \right)\ dt\, ds \\[2mm]
			& \leq C\, r^2 + \int_0^r \left( \int_{B_{\tilde{g}}(x,s)} w_{12} \right)\, ds \\[2mm]
			& \leq C\, r^2 + r\, \vol_{\tilde{g}}(B_{\tilde{g}}(x,r))^{\frac{1}{2}}\, \|w_{12}\|_{L^2(B_{\tilde{g}}(x,r),{\tilde{g}})}.
		\end{align*}
		By Young inequality, we obtain 
		\begin{align*}
			\vol_{\tilde{g}}(B_{\tilde{g}}(x,r)) \leq C\left( 1 + \|w_{12}\|_{L^2(\D,g)}^2 \right) r^2.
		\end{align*}
		Consequently, if we had a control on the diameter for $g$ of closed curve, then the assumption \Cref{it:vol_growth} would not be necessary.
	\end{remark}

	\subsection{Existence of a Coulomb frame and conformal coordinates.}
	In this section, we prove that there exist an open set $\Omega\subset \C$, a point $p\in\Omega$ and a diffeomorphism $\psi\colon \Omega\setminus \{p\}\to \D\setminus \{0\}$ such that $\psi^*g$ is conformal to the Euclidean metric.\\

	We denote $h\coloneq (\det g)^{-\frac{1}{2}}\, g$, which verifies by \Cref{cl:eigenvalues} and \Cref{it:isop} the following pointwise inequalities between matrices
	\begin{align}\label{eq:ineq_h}
		\frac{\delta_{ij}}{(4\pi\Lambda_{\max})^2} < h_{ij} <  \delta_{ij}.
	\end{align}
	By the previous section, the following quantity is well-defined and finite 
	\begin{align}\label{eq:Coulomb}
		\begin{aligned} 
		E & \coloneq \inf\left\{ \int_{\D} |\theta_{12}|^2_g\, d\vol_g : \theta^1 + i\theta^2 = e^{iu} ( w^1 + i w^2),\ u\in W^{1,2}(\D) \right\} \\[3mm]
		& = \inf\left\{ \int_{\D} |\theta_{12}|^2_h\, d\vol_h : \theta^1 + i\theta^2 =  e^{iu} ( w^1 + i w^2),\ u\in W^{1,2}(\D) \right\}.
		\end{aligned} 
	\end{align}
	If $u\in W^{1,2}(\D)$, we have by \eqref{eq:ineq_h}, that
	\begin{align*}
		\int_{\D} |du|^2_h\, d\vol_h = \int_{\D} h^{ij}\, (\dr_i u)\, (\dr_j u)\, \sqrt{\det h}\, dx > \frac{1}{C} \int_{\D} |du|^2_{\xi}.
	\end{align*}
	Hence, there exists a minimizing frame  $(\alpha^1,\alpha^2)$ reaching $E$ with $\alpha_{12}\in L^2(\D,\xi)\cap L^2(\D,g)$. It satisfies the following Euler--Lagrange equation in $\Dr'(\D)$ proved in \cite[Lemma 4.1.3]{helein2002}:
	\begin{align}\label{eq:syst_Coulomb1}
		\begin{cases}
			d\left( *_h \alpha_{12} \right) = 0 & \text{ in }\D,\\[2mm]
			i^*(*_h  \alpha_{12}) = 0 & \text{ on }\dr\D,
		\end{cases}
	\end{align}
	where $i\colon \dr\D\rightarrow \overline{\D}$ is the inclusion map. Since $*_h\alpha_{12} = *_h w_{12} + *_h du \in L^2(\D)$, we can apply the Poincaré Lemma and obtain a function $\mu\in W^{1,2}(\D)$ such that 
	\begin{align}\label{eq:conf_factor}
		\begin{cases}
			*_h \alpha_{12} = d\mu & \text{ in }\D,\\[2mm]
			\mu = 0 & \text{ on }\dr \D.
		\end{cases}
	\end{align}
	We denote $(e_1,e_2)$ the associated frame of $(\alpha^1,\alpha^2)$ for $r^{-2m}g$. As proved in Lemma 5.4.1 and in the proof of Theorem 5.4.3 (Step 3) in \cite{helein2002}, we have 
	\begin{align}\label{eq:frame}
		\begin{cases}
			\displaystyle \left[ e^{\mu}\, e_1, e^{\mu}\, e_2 \right] =0 & \text{ in }\D\setminus \{0\} ,\\[2mm]
			\displaystyle d(e^{-\mu}\alpha^1) = d(e^{-\mu}\alpha^2) =0 & \text{ in }\D\setminus \{0\}.
		\end{cases}
	\end{align}
	By Frobenius theorem, see for instance \cite[Theorem 1.64]{warner1983} or \cite[Chapter 6]{spivak1999}, there exists an open set set $\Omega\subset \R^2$ and a maximal solution $\psi\colon \Omega\to \D\setminus \{0\}$ such that 
	\begin{align*}
		e^{\mu}\, e_i\circ\psi = \dr_i \psi \qquad \text{ in }\D\setminus \{0\}.
	\end{align*}
	Moreover we have by Poincaré lemma in $\D^*$, the existence of functions $\vp^i\in C^{\infty}(\D\setminus \{0\})$ and constants $\kappa^i\in\R$ such that 
	\begin{align*}
		e^{-\mu}\, \alpha^i = d\vp^i + \kappa^i\, d\theta \qquad \text{ in }\D\setminus \{0\}.
	\end{align*}
	As a consequence of \Cref{it:vol_growth}, we obtain that $\kappa^i=0$.
	
	\begin{claim}\label{cl:singularity}
		Up to changing $\D\setminus \{0\}$ by $\D_r\setminus \{0\}$ for $r>0$ small enough in the definition of $E$, it holds  $\kappa^i=0$. Moreover, we have a sequence of radii $(r_k)_{k\in\N}\subset (0,r)$ converging to 0 as $k\to +\infty$ such that 
		\begin{align}\label{eq:length}
			\int_{\dr \D_{r_k}} \left( |d\vp^1|_{\xi} + |d\vp^2|_{\xi}\right)\, dx \xrightarrow[k\to +\infty]{} 0.
		\end{align}
	\end{claim}
	\begin{proof}	
		By definition, it holds
		\begin{align*}
			\forall r\in(0,1),\qquad 2\pi\, r\, \kappa^i  = \int_{\dr \D_r} e^{-\mu}\, \alpha^i(\dr_{\theta})\, d\theta.
		\end{align*}
		For each $\ve\in (0,\frac{1}{2})$, there exists $r_{\ve}\in (\ve,2\ve)$ such that 
		\begin{align*}
			\int_{\dr \D_{r_{\ve}}} \left( |e^{-\mu}\, \alpha^1|_{\xi} + |e^{-\mu}\, \alpha^2|_{\xi}\right) & \leq \frac{2}{\ve} \int_{\D_{2\ve}\setminus \D_{\ve}} \left( |e^{-\mu}\, \alpha^1|_{\xi} + |e^{-\mu}\, \alpha^2|_{\xi} \right).
		\end{align*}
		By \eqref{eq:ineq_h}, we obtain 
		\begin{align}
			\int_{\dr \D_{r_{\ve}}} \left( |e^{-\mu}\, \alpha^1|_{\xi} + |e^{-\mu}\, \alpha^2|_{\xi}\right) & \leq \frac{C}{\ve} \int_{\D_{2\ve}\setminus \D_{\ve}} |e^{-\mu(x)}|\, \left(\det |x|^{2m}g(x)\right)^{\frac{1}{4}}\ dx \nonumber \\[2mm]
			& \leq C\, \|e^{-\mu}\|_{L^2(\D_{2\ve}\setminus \D_{\ve})}\, \frac{\vol_g(\D_{2\ve}\setminus \D_{\ve})^{1/2}}{\ve^{-m+1}}.\label{eq:small_alpha}
		\end{align}
		By \Cref{it:vol_growth}, we obtain 
		\begin{align*}
			|\kappa^1| + |\kappa^2| \leq C_{\delta}\, \|e^{-\mu}\|_{L^2(\D_{2\ve}\setminus \D_{\ve})}\, \ve^{-\delta}.
		\end{align*}
		Using Moser--Trudinger inequality (see for instance \cite[Theorem 7.15]{gilbarg2001}), we obtain $e^{-\mu}\in L^p(\D,\xi)$ for any $p\in[1,+\infty)$. Hence, can apply Hölder inequality to the left-hand side and letting $\ve\to 0$, we obtain $\kappa^i=0$. Hence, we have $e^{-\mu}\alpha^i = d\vp^i$ and the estimate \eqref{eq:small_alpha} leads to \eqref{eq:length}.
	\end{proof}
	
	Since $\kappa^i=0$, we obtain $\alpha^i =e^{\mu}\, d\vp^i$. Hence, we obtain 
	\begin{align*}
		\delta^i_j = \alpha^i(e_j)\circ\psi = e^{-\mu}\, \alpha^i\left( e^{\mu}\, e_j\right)\circ\psi = d\vp^i \left( \dr_j \psi\right) = \dr_j(\vp^i\circ\psi).
	\end{align*}
	Hence, $\psi$ is invertible with inverse $\psi^{-1} = (\vp^1,\vp^2)$. Moreover, it holds 
	\begin{align*}
		\psi^*(r^{2m}g) = \psi^*\left( \alpha^1\otimes \alpha^1 + \alpha^2\otimes\alpha^2 \right) = e^{-2\mu}\, \xi.
	\end{align*}
	In order to ensure that we obtained isothermal coordinates on a pointed open set, we now show that the preimages $\psi^{-1}(\D_s\setminus \{0\})$ are open sets with diameter converging to 0. This is a consequence of \eqref{eq:length}: if $s\leq r_k$ (where $r_k$ is defined in \Cref{cl:singularity}), then we have
	\begin{align*}
		\diam\Big( \psi^{-1}(\D_s\setminus \{0\}) \Big) \leq \diam\Big( \psi^{-1}(\D_{r_k}\setminus \{0\}) \Big) \leq \diam\Big( \psi^{-1}(\dr \D_{r_k}) \Big). 
	\end{align*}
	Hence, we obtain 
	\begin{align*}
		\diam\Big( \psi^{-1}(\D_s\setminus \{0\}) \Big) \leq \mathrm{Length}_{\xi}\Big( \psi^{-1}(\dr \D_{r_k}) \Big) \leq \int_{\dr \D_{r_k}} \Big( |d\vp^1|^2 + |d\vp^2|^2 \Big)^{\frac{1}{2}} \xrightarrow[k\to +\infty]{}0.
	\end{align*}
	Thus, the open set $\psi^{-1}(\D\setminus \{0\})$ is of the form $\Omega\setminus \{p\}$ for some open set $\Omega\subset \C$ and $p\in\Omega$.
	
	\subsection{Conformal factor and Liouville equation}
	Up to translation of the domain $\Omega$ obtained in the previous section, we can assume that $p=0$. Up to shrinking $\Omega$, we consider the situation $\Omega\setminus \{p\}=\D\setminus \{0\}$. We now have a metric $g=r^{-2m}\, e^{2u}\, \xi$ where the conformal factor $u\colon \D\setminus\{0\}\to \R$ (given by $u\coloneq \mu\circ\psi$) is a solution to \eqref{eq:conf_factor}, that is to say
	\begin{align*}
		-\lap_{\xi} u = K_{r^{2m}g}\, \sqrt{\det r^{2m}g} =  K_g\, \sqrt{\det g}- m\delta_0\in W^{-1,2}(\D).
	\end{align*}
	By elliptic regularity, we obtain $u\in W^{1,2}(\D_{1/2},\xi)$. Moreover, there exists a universal constant $C>0$ such that the following estimate holds
	\begin{align*}
		\|\g u\|_{L^2(\D_{1/2})} \leq C\, \left\| K_g\, \sqrt{\det g}- m\delta_0 \right\|_{W^{-1,2}(\D,\xi)} + C\, \|u\|_{L^2(\D,\xi)}.
	\end{align*}
	Moreover, we have by \Cref{it:vol_growth}
	\begin{align*}
		\|u\|_{L^2(\D,\xi)} \leq \|e^{2u}\|_{L^2(\D,\xi)}  = \vol_{r^{-2m}g}(\D) <+\infty.
	\end{align*}

	\section{Huber Theorem for 4-dimensional manifolds}\label{sec:Huber4D}
	
	\subsection{Main result}\label{sec:main}
	Consider $g$ an orbifold metric on $C(\s^3/\Gamma)\setminus \{0\}$, where $\Gamma$ is any finite subgroup of $\SOo{4}$. By definition, we can lift this metric to a smooth metric on $\B\setminus \{0\}$, where $\B$ is the unit ball of $\R^4$. Then, by working only in the space of functions which are invariant by $\Gamma$ (finite group) which does not change the analysis, we can assume that $\Gamma=\{1\}$. Hence, the goal of this section is to prove the following result.
	
	\begin{theorem}\label{th:Huber4D}
		Let $p\in(1,2]$. Consider a smooth metric $g$ on $\B\setminus\{0\}$ satisfying the following assumptions:
		\begin{enumerate}[label= $A$.\arabic*]
			\item\label{asump:finite_volp} The volume of $\B$ is finite: $\vol_g(\B)<+\infty$.
			\item\label{asump:Bachp} We have $\Riem^g\in L^2(\B,g)$ and  $B^g\in L^p(\B,g)$.
			\item\label{asump:Sobolevp} There exists a constant $\gamma_S>0$ such that the following Sobolev inequality holds:
			\begin{align*}
				\forall u\in C^{\infty}_c(\B\setminus \{0\}),\quad
				\|u\|_{L^4(\B)}^2 \leq \gamma_S \int_{\B} |\g u|^2_g \, d\vol_g.
			\end{align*}
			\item\label{asump:Laplacianp} There exists a constant $\gamma_L>0$ such that the following regularity estimate holds. For any $u\in C^{\infty}_c(\B\setminus \{0\})$, it holds
			\begin{align*}
				\|\g u\|_{L^4(\B,g)} \leq \gamma_L \|\lap_g u\|_{L^2(\B,g)} .
			\end{align*}
		\end{enumerate}
		There exists a constant $\ve>0$ depending only on $\gamma_S$ and $\gamma_L$ such that the following holds. Assume that
		\begin{align*}
			\|\Riem^g\|_{L^2(\B,g)} \leq \ve.
		\end{align*}
		Then there exist a diffeomorphism $\psi \colon \B\setminus\{0\}\to \B\setminus\{0\}$, a conformal factor $u\in C^{\infty}(\B\setminus \{0\})$, and a metric $h\in W^{2,2p}(\B,\xi)\cap C^{\infty}(\B\setminus \{0\})$ such that
		\begin{align*}
			\psi^*g = e^{2u}h.
		\end{align*}
	\end{theorem}
	
	Before proving the above result, we discuss all the assumptions of analytic nature. We will need to use Sobolev inequalities and elliptic regularity estimates for the metric $g$. To obtain uniform estimates, we need to assume that the constants are valid up to the origin.

	\subsubsection*{The isoperimetric inequality implies all the Sobolev embeddings.}
	
	The isoperimetric inequality is linked to Sobolev embeddings, see for instance \cite{carron1996}. From \cite[Chapter IV, Section 3, Theorem 4]{chavel1984}, we know that 
	\begin{align*}
		\Lambda^4 \coloneq \inf_{f\in C^{\infty}_c(\B\setminus \{0\})\setminus \{0\}} \frac{ \|\g f\|_{L^1(\B,g)}^4 }{ \|f\|_{L^{\frac{4}{3}}(\B,g)}^{3} } = \inf_{\Omega\subset \B\setminus \{0\}} \frac{\vol(\dr \Omega)^4}{\vol(\Omega)^3}.
	\end{align*}
	Hence, an isoperimetric inequality involves the following Sobolev inequality: for any $f\in C^{\infty}_c(\B\setminus \{0\})\setminus \{0\}$, it holds
	\begin{align}\label{eq:Sobolev_L1}
		\|f\|_{L^{\frac{4}{3}}(\B,g)} \leq \Lambda\, \|\g f\|_{L^1(\B,g)}.
	\end{align}
	By choosing $f= u^3$ for some $u\in C^{\infty}_c(\B\setminus \{0\})$, we obtain
	\begin{align*}
		\left( \int_{\B} u^4\, d\vol_g \right)^{\frac{3}{4}} = \|u^3\|_{L^{\frac{4}{3}}(\B,g)} & \leq 3\Lambda \int_{\B} |u|^2 |\g u|\, d\vol_g \leq 3\Lambda\, \|u\|_{L^4(\B,g)}^2\, \|\g u\|_{L^2(\B,g)}.
	\end{align*}
	We deduce the following Sobolev inequality:
	\begin{align*}
		\|u\|_{L^4(\B,g)} \leq 3\, \Lambda\, \|\g u\|_{L^2(\B,g)}.
	\end{align*}
	We now make another choice in \eqref{eq:Sobolev_L1}. Namely,we chose $f=e^{\gamma|u|} -1$ for some $\gamma>0$ small to be chosen later, and $u\in C^{\infty}_c(\B\setminus \{0\})$ such that $\|\g u\|_{L^4(\B,g)} = 1$, then it holds
	\begin{align*}
		\left\| e^{\frac{4\gamma}{3}|u|} \right\|_{L^1(\B,g)}^{\frac{3}{4}} & \leq \left\| e^{\gamma|u|} - 1\right\|_{L^{\frac{4}{3}}(\B,g)} + \vol_g(\B)^{\frac{3}{4}}  \leq \Lambda\gamma \int_{\B}  e^{\gamma|u|} |\g u|\, d\vol_g + \vol_g(\B)^{\frac{3}{4}}.
	\end{align*}
	By Hölder inequality and using the assumption $\|\g u\|_{L^4(\B,g)} = 1$, we obtain
	\begin{align*}
		\left\| e^{\frac{4\gamma}{3}|u|} \right\|_{L^1(\B,g)}^{\frac{3}{4}}\leq \Lambda\gamma \left\| e^{\frac{4\gamma}{3}|u|} \right\|_{L^1(\B,g)}^{\frac{3}{4}} + \vol_g(\B)^{\frac{3}{4}}.
	\end{align*}
	Chosing now $\gamma = (2\Lambda)^{-1}$, we obtain 
	\begin{align*}
		\left\| e^{\frac{4\gamma}{3}|u|} \right\|_{L^1(\B,g)}^{\frac{3}{4}}\leq 2\vol_g(\B)^{\frac{3}{4}}.
	\end{align*}
	
	\subsubsection*{Sobolev inequalities for hypersurfaces.}
	
	For the applications to generalized Willmore hypersurfaces, we can rely on Sobolev inequalities on hypersurfaces of $\R^5$, see for instance \cite[Theorem 1.1]{cabre2022}, \cite[Section 7]{allard1972}, \cite[Theorem 2.1]{michael1973} or \cite[Theorem 1]{brendle2021}. For any hypersurface $\Sigma^4\subset \R^d$ and $p\in[1,4)$, there exists a constant $C=C(p,d)>0$ such that 
	\begin{align*}
		\forall\vp \in C^\infty_c(\Sigma),\qquad \left( \int_{\Sigma} |\vp|^{\frac{4p}{4-p}}\, d\vol_{\Sigma} \right)^{\frac{4-p}{4}} \leq C(p)\int_\Sigma |\g^{\Sigma} \vp|^p + |H\vp|^p\ d\vol_\Sigma.
	\end{align*}
	By Hölder inequality and assuming that $\|H\|_{L^4(\Omega)}$ is small enough for some open set $\Omega\subset \Sigma$, we obtain the following Sobolev inequality:
	\begin{align}\label{eq:Soblev_immersions}
		\forall \vp \in C^\infty_c(\Omega),\qquad  \|\vp\|_{L^{\frac{4p}{4-p}}(\Omega)} \leq 2\, C(p)\, \|\g^\Sigma \vp\|_{L^p(\Omega)}.
	\end{align}
	
	\subsubsection*{Elliptic regularity}
	
	The assumption \ref{asump:Laplacianp} can be understood as a consequence of the following estimate on the Green kernel $G$ of the Laplacian $\lap_g$ with Dirichlet boundary data. We define $G\colon (\B\setminus \{0\})^2 \to \R$ as the solution to the following equation: for any $y\in \B\setminus \{0\}$, the function $G_y\coloneqq G(\cdot,y)$ satisfies
	\begin{align*}
		\begin{cases}
			-\lap_g G_y = 0 & \text{in }\B\setminus\{0\},\\[1mm]
			G_y = 0 & \text{on }\dr(\B\setminus \{0\}).
		\end{cases}
	\end{align*}  
	Then \ref{asump:Laplacianp} is a consequence of the existence of a constant $C>0$ such that 
	\begin{align*}
		\forall y\in \B\setminus \{0\},\qquad \sup_{\lambda>0}\, \lambda^{\frac{4}{3}}\, \vol_g\Big(\left\{ x\in \B\setminus \{0\} : |d G_y(x)|_{g(x)} > \lambda \right\}\Big) \leq C.
	\end{align*}
	It is proved in \cite{carron1996} that such an estimate is a consequence of a Sobolev inequality. Similarly as for \eqref{eq:Soblev_immersions}, we can also see the assumption \ref{asump:Laplacianp} as a localized version for a coercivity condition on the Paneitz operator
	\begin{align*}
		\forall f\in C^{\infty}(\B\setminus \{0\}), \qquad P_g(f) \coloneq \lap_g^2 f + \di_g\left( 2\, \Ric(\g^g f,\cdot) - \frac{2}{3}\, \Scal^g\, \g^g f \right).
	\end{align*}
	Indeed, \ref{asump:Laplacianp} is a consequence of the following inequality together with a smallness assumption on $\|\Ric\|_{L^2(\B,g)}$
	\begin{align*}
		\forall f\in C^{\infty}_c(\B\setminus \{0\}),\qquad \int_{\B} f\, (P_g f)\ d\vol_g \geq C\, \left( \int_{\B} |\g f|^4_g\, d\vol_g  \right)^{\frac{1}{2}}.
	\end{align*}

	\subsection{Definition of the Schouten tensor and relations}\label{sec:Schouten}
	In this section, we define the Schouten tensor and provide a few remarks on the Bach tensor.\\
	
	Let $g$ be a smooth metric on $\Mr = \B^4 \setminus \{0\}$. The Schouten tensor of $g$ is given by 
	\begin{align*}
		\Sch_{ab}^g \coloneq \frac{1}{2} \left( \Ric^g_{ab} - \frac{\Scal^g}{6} g_{ab}\right), & & J^g \coloneq \tr_g\, \Sch^g = \frac{1}{6}\, \Scal^g.
	\end{align*}
	If $g_u \coloneq e^{2u}g$ and $\lap_g = (\g^g)^a (\g^g)_a$ is the Laplace--Beltrami operator, then it holds, see for instance \cite[Theorem 1.159]{besse2008}:
	\begin{align}
		\Sch^{g_u}_{ab} &= \Sch^g_{ab} - \g_a^g \g^g_b u + (\g_a^g u)(\g_b^g u) - \frac{|du|^2_g}{2} g_{ab},\label{eq:conf_change_Schouten}\\[2mm]
		J^{g_u} &= e^{-2u} \Big( J^g - \lap_g u - |du|^2_g \Big). \label{eq:conf_chang_Scal}
	\end{align}
	
	Using the Bach tensor, we now deduce the equation relating $\lap \Sch$ and $\Hess(J)$ obtained in \cite[Section 2.1]{tian2005}. This is the fundamental system used to obtain $\ve$-regularity. For any metric $g$, it holds:
	\begin{align}\label{eq:Bach}
		(\lap_g \Sch^g )_{ij} & = B^g_{ij} + \g_{ij}^g J^g   + 4\, (\Sch^g)_i^{\ p}\, \Sch^g_{pj} - |\Sch^g|^2_g\, g_{ij} - 2\, (\Sch^g)^{pk}\, W^g_{kipj},
	\end{align}
	where $B^g$ is the Bach tensor.	Thanks to the Chern--Gauss--Bonnet formula, the Euler--Lagrange equation of $\int |\Sch|^2$ is the same as the Euler--Lagrange equation of $\int J^2$ when we restrict ourselves to a conformal class:
	\begin{align}\label{eq:CGB_boundary}
		\begin{aligned} 
			& 32\pi^2 \chi(\B\setminus \B_r) = \int_{\B\setminus \B_r} |W^g|^2_g + 8((J^g)^2 - |\Sch^g|^2_g)\ d\vol_g  \\[2mm]
			& + 8\int_{\dr (\B\setminus \B_r)} \left( \frac{1}{2} \Scal^g H - \Ric^g(\nu,\nu) H - \Riem^{\gamma}_{\ \alpha\beta \gamma} \II^{\alpha\beta} + \frac{1}{3} H^3 -H|\II|^2_g + \frac{2}{3} \tr_{g_{|\dr \B_r}} (\II^3) \right)\, d\vol_g. 
		\end{aligned} 
	\end{align}

	\subsection{Strategy}
	\label{strategy}
	Consider a metric $g$ on $\B\setminus \{0\}$. By \eqref{eq:Bach}, we have the following system where $\Riem \star\Sch$ stands for linear combinations of different contractions between the Riemann tensor and the Schouten tensor
	\begin{equation}\label{eq:lap_Sch}
			\lap_g \Sch^g  - \Hess_g(J^g) = B^g + \Riem\star \Sch^g.
	\end{equation}
	Since the Bach tensor is traceless (due to the conformal invariance), this system can be roughly understood as providing informations only on the traceless part of the Schouten tensor. Hence, some information is missing by look at the Bach tensor only. As observed in \cite[Equation (43)]{korzy2003} and \cite[Lemma 4.2]{gover2008}, the tractor bundle of $(\B\setminus \{0\},g)$ is Yang--Mills if and only if $B^g = 0$. Furthermore, it was proved in \cite[Theorem 1.1]{tao2004} or \cite[Theorem VI.9]{riviere2020}, that point singularities are removable for Yang--Mills connection. Hence, one could hope to get a similar result for Bach-flat manifolds. To do so, we first need to identify the Coulomb condition. For Yang--Mills connections, this condition can be understood as finding a critical point of the $L^2$-norm of the connection. Thanks to \cite[Equation (15)]{gover2008} and the Chern--Gauss--Bonnet formula \eqref{eq:CGB_boundary}, we are reduced to study the following energy:
	\begin{align*}
		\forall u \in W^{2,2}(\B,g),\qquad E(u) = \frac{1}{2} \int_{\B} (J^{g_u})^2\, d\vol_{g_u},
	\end{align*}
	where $g_u = e^{2u}g$. We prove in \Cref{lm:EL_J} that the Euler--Lagrange equation of $E$ is $\lap_{g_u} J^{g_u} = 1$. In particular, we obtained that $g_u$ is smooth as soon as $g$ is smooth in \Cref{lm:gr_smooth}.
	\begin{remark}\label{rk:J_cst}
		\begin{itemize}
			\item The main term $\int (J^{g_u})^2$ is positive and minimized when $J^{g_u} = 0$. Marques \cite[Theorem 1.2]{marques2007} proved that we can solve $\bar{J}=0$ under the condition that the first eigenvalue of the conformal Laplacian has to be positive, see for instance \cite{escobar1994}. Without this condition, it is unclear that we can solve $J^{g_u} = 0$ with estimates on $u$.
			
			\item If $\bar{J}=0$ and $\bar{B}=0$, then the $\ve$-regularity has been proved in \cite{tian2005,carron2014}. We need to bound the constant in Sobolev inequality, see for instance \cite{druet2002,aldana2021}, or an estimate on the growth of balls, see for instance \cite{tian2008}.
		\end{itemize}
	\end{remark}
	
	However, as in the 2-dimensional case, solving this problem directly on $\B$ is not obvious. Instead, we first solve it in $\B\setminus \B_r$, that is to say, we minimize the following functional:
	\begin{align*}
		\forall u \in W^{2,2}_0(\B\setminus \B_r,g),\qquad E_r(u) = \frac{1}{2}\int_{\B\setminus \B_r} (J^{g_u})^2\, d\vol_{g_u}.
	\end{align*}
	Then we pass to the limit $r\to 0$. To do so, we need some preliminary estimates. We prove an $\ve$-regularity in the spirit of \cite{tian2005,tian2008}. A priori, we have a system of the form $\lap u = fu + g$. If $f$ and $g$ are regular, then $u$ is regular as well. We then show that the limiting metric $g_0$ is continuous across the singularity by proceeding to a blow-up at the origin, see \cite{anderson1990,anderson2005}. In a neighbourhood of the origin, $(\B\setminus \{0\},\bar{g})$ will be diffeomorphic to a cone $C(\s^3/\Gamma)\setminus \{0\}$ where $\Gamma$ is a finite subgroup of $\SOo{4}$. Since $\B$ is simply connected, we obtain that $\Gamma$ is trivial. We can extend continuously $g_0$ to the origin by the flat metric, see \cite{bando1989,streets2010}.

	\subsection{The energy functional}\label{sec:energy_functional}
	
	In this section, we consider a smooth metric $g$ on $\B\setminus\{0\}$ satisfying the assumptions of \Cref{th:Huber4D}. We denote $\Omega_r \coloneq \B^4\setminus \B_r^4$. If $u\in C^\infty(\Omega_r)$, we define $g_u \coloneq e^{2u}g$. Given $r>0$, we define 
	\begin{align}\label{def:Er}
		\forall u\in C^\infty(\Omega_r),\qquad E_r(u) \coloneq \frac{1}{2} \int_{\Omega_r } (J^u)^2\, d\vol_{g_u}.
	\end{align}

	\begin{lemma}\label{lm:existence_minimizer}
		Up to replace $\B$ with $\B_{r_0}$ for some $r_0>0$ small enough, one can assume that 
		\begin{align}\label{hyp:upper_bound_E0}
			E_0(0) = \frac{1}{2}\int_{\B}(J^g)^2 \, d\vol_g < \frac{1}{4^6\, \gamma_L^4}.
		\end{align}
		Consider $(u_k)_{k\in\N} \subset C^\infty_c(\Omega_r)$ a minimizing sequence for the following optimization problem:
		\begin{align*}
			& \inf\left\{ E_r(w) : w\in C^\infty_c(\Omega_r),\ \|dw\|_{L^4(\Omega_r)} < \frac{1}{4\, \gamma_L} \right\} \\[2mm]
			& = \inf\left\{ E_r(w) : w\in W^{2,2}_0(\Omega_r,g),\ \|dw\|_{L^4(\Omega_r)} < \frac{1}{4\, \gamma_L} \right\}.
		\end{align*}
		Then $(u_k)_{k\in\N}$ weakly converges in $W^{2,2}_0$, up to a subsequence, to some $u_r\in W^{2,2}_0(\Omega_r,g)$ such that 
		\begin{align*}
			E_r(u_r) = \inf\left\{ E_r(w) : w\in C^\infty_c(\Omega_r),\ \|dw\|_{L^4(\Omega_r)} < \frac{1}{4\, \gamma_L}\right\}.
		\end{align*}
		In particular, $u_r$ is a critical point of $E_r$ in $W^{2,2}_0(\Omega_r,g)$ satisfying the following estimates:
		\begin{equation}\label{eq:est_du}
			\left\{
			\begin{aligned}
				& \|du_r\|_{L^4(\Omega_r,g)} < \frac{8}{3}\, \gamma_L\, E_r(0)^{\frac{1}{2}},\\[2mm]
				& \|\lap_g u_r\|_{L^2(\Omega_r,g)} \leq \|du_r\|_{L^4(\Omega_r,g)}^2 + 2\, E_r(0)^{\frac{1}{2}}.
			\end{aligned}
			\right.
		\end{equation}
	\end{lemma}

	\begin{proof}
		From\eqref{eq:conf_chang_Scal}, it holds
		\begin{align*}
			\forall k\in\N,\qquad E_r(u_k) = \frac{1}{2}\int_{\Omega_r} \left(J^g - \lap_g u_k - |du_k|^2_g \right)^2 \, d\vol_g\leq E_r(0).
		\end{align*}
		Since the exponential term is positive, we obtain
		\begin{align*}
			\left\| J^g - \lap_g u_k - |du_k|^2_g \right\|_{L^2(\Omega_r,g)} \leq \left(2\, E_r(0)\right)^{\frac{1}{2}}.
		\end{align*}
		Using the triangle inequality, we deduce 
		\begin{align*}
			\|\lap_g u_k\|_{L^2(\Omega_r,g)} &\leq \|J^g\|_{L^2(\Omega_r,g)} + \|du_k\|_{L^4(\Omega_r,g)}^2 +\left(2\, E_r(0)\right)^{\frac{1}{2}} \leq \|du_k\|_{L^4(\Omega_r,g)}^2 + 2^{\frac{3}{2}} E_r(0)^{\frac{1}{2}}.
		\end{align*}
		Since $\|du_k\|_{L^4(\Omega_r,g)} < (4\gamma_L)^{-1}$, we obtain
		\begin{align}\label{eq:boundW22_minimizing}
			\|\lap_g u_k\|_{L^2(\Omega_r,g)} &\leq \frac{1}{4\, \gamma_L}\|du_k\|_{L^4(\Omega_r,g)} + 4 E_r(0)^{\frac{1}{2}}.
		\end{align}
		Thanks to \ref{asump:Laplacianp}, we obtain
		\begin{align*}
			\|du_k\|_{L^4(\Omega_r,g)} &\leq \frac{1}{4}\|du_k\|_{L^4(\Omega_r,g)} + 4\, \gamma_L\,  E_r(0)^{\frac{1}{2}}.
		\end{align*}
		Hence, it holds
		\begin{align*}
			\|du_k\|_{L^4(\Omega_r,g)} \leq \frac{16}{3} \gamma_L\,  E_r(0)^{\frac{1}{2}}.
		\end{align*}
		Thanks to \eqref{hyp:upper_bound_E0}, we obtain:
		\begin{align*}
			\|du_k\|_{L^4(\Omega_r,g)} \leq \frac{1}{12\, \gamma_L}.
		\end{align*}
		Together with \eqref{eq:boundW22_minimizing}, we deduce that $(u_k)_{k\in\N}$ remains bounded in $W^{2,2}_0(\Omega_r,g)$ and also in the interior of the ball $\Br \coloneq \left\{ w\in W^{1,4}_0(\Omega_r): \|dw\|_{L^4(\Omega_r,g)} < (4 \gamma_L)^{-1} \right\}$.
	\end{proof}

	\begin{remark}\label{rk:Ineq_Riem}
		Since $u_r=0$ on $\dr\Omega_r$, $u_r$ also minimize the $L^2$-norm of $\Sch$ thanks to the Chern--Gauss--Bonnet formula \eqref{eq:CGB_boundary}. Thus, if $g_r \coloneq g_{u_r}$ then we have the following uniform bound:
		\begin{align*}
			\int_{\Omega_r} |\Riem^{g_r}|^2_{g_r}\, d\vol_{g_r} &\leq \int_{\B\setminus \B_r} |\Riem^{g}|^2_{g}\, d\vol_{g}  \leq \int_{\B\setminus \{0\}} |\Riem^{g}|^2_{g}\, d\vol_{g}.
		\end{align*}
	\end{remark}
	
	In the following lemma, we compute the Euler--Lagrange equation of $E_r$.
	
	\begin{lemma}\label{lm:EL_J}
		Let $u\in W^{2,2}_0(\Omega_r,g)$ be a critical point of $E_r$. Then, the metric $g_u$ satisfies the following equation:
		\begin{align*}
				\lap_{g_u} J^{g_u} = 0 \qquad  \text{in }\Dr'(\Omega_r).
		\end{align*}
	\end{lemma}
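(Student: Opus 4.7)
The plan is to compute the first variation of $E_r$ along a compactly supported perturbation and then rewrite everything intrinsically with respect to $g_u$.

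First I would use the conformal change formula \eqref{eq:conf_chang_Scal} (noting $J = \Scal/6$) and $d\vol_{g_u}=e^{4u}d\vol_g$ to rewrite
\begin{align*}
E_r(u) = \int_{\Omega_r}\frac{1}{2}\bigl(J^g - \lap_g u - |du|^2_g\bigr)^2 + \frac{e^{4u}}{4}\, d\vol_g.
\end{align*}
Since the bounds in \Cref{lm:existence_minimizer} together with \eqref{hyp:upper_bound_E0} give $\|du_r\|_{L^4(\Omega_r,g)}<(4\gamma_L)^{-1}$ strictly, the constraint $\|du\|_{L^4}<(4\gamma_L)^{-1}$ is inactive at $u=u_r$, so any $\phi\in C^\infty_c(\Omega_r)$ yields an admissible variation $u+t\phi$ for $|t|$ small.

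Next I would differentiate at $t=0$. Using $\frac{d}{dt}|d(u+t\phi)|^2_g\big|_{t=0}=2\langle du,d\phi\rangle_g$, the critical point condition reads
\begin{align*}
\int_{\Omega_r}\bigl(J^g - \lap_g u - |du|^2_g\bigr)\bigl(-\lap_g\phi-2\langle du,d\phi\rangle_g\bigr)\, d\vol_g + \int_{\Omega_r}e^{4u}\phi\, d\vol_g = 0.
\end{align*}
The key recognition is the two conformal identities in dimension $4$: on one hand \eqref{eq:conf_chang_Scal} gives $J^g-\lap_g u - |du|^2_g = e^{2u}J^{g_u}$, and on the other hand the transformation of the Laplace--Beltrami operator yields $\lap_g\phi+2\langle du,d\phi\rangle_g = e^{2u}\lap_{g_u}\phi$. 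Substituting both and using $e^{4u}d\vol_g = d\vol_{g_u}$ turns the variation into
\begin{align*}
\int_{\Omega_r}\bigl(-J^{g_u}\lap_{g_u}\phi + \phi\bigr)\, d\vol_{g_u} = 0.
\end{align*}

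Finally, since $\phi\in C^\infty_c(\Omega_r)$, an integration by parts for $\lap_{g_u}$ (valid as $u\in W^{2,2}_0$ is smooth enough on compact subsets of $\Omega_r$ away from $\dr\Omega_r$, and $\phi$ is compactly supported) rewrites this as
\begin{align*}
\int_{\Omega_r}\bigl(-\lap_{g_u}J^{g_u}+1\bigr)\phi\, d\vol_{g_u} = 0
\end{align*}
for every test $\phi$, giving the announced distributional equation $\lap_{g_u}J^{g_u}=1$. The only delicate point in this plan is the regularity needed to perform the integration by parts against $J^{g_u}$, which is only a priori $L^2$; but since $\phi$ is compactly supported in $\Omega_r$ and $J^{g_u}=e^{-2u}(J^g-\lap_g u-|du|^2_g)$ is an $L^2$ distribution, the pairing $\int J^{g_u}\lap_{g_u}\phi\, d\vol_{g_u}$ is well defined and equals $\langle \lap_{g_u}J^{g_u},\phi\rangle_{\Dr',\Dr}$ by the definition of distributional Laplacian, which is exactly what the statement claims.
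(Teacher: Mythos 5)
Your proof is correct and essentially matches the paper's: both compute the first variation of $E_r$ in the $g$-background using \eqref{eq:conf_chang_Scal} and identify $J^g - \lap_g u - |du|^2_g = e^{2u}J^{g_u}$. The only cosmetic difference is that you apply the conformal transformation of the Laplace--Beltrami operator, $\lap_g\phi+2\scal{du}{d\phi}_g = e^{2u}\lap_{g_u}\phi$, to the test function \emph{before} integrating by parts, while the paper integrates by parts in $g$ first and then identifies the resulting divergence as $e^{4u}\lap_{g_u}J^{g_u}$; your observation that the $L^4$-gradient constraint from \Cref{lm:existence_minimizer} is strictly inactive, so $u_r$ is a genuine interior critical point, is a helpful bridge that the paper leaves implicit.
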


	\begin{proof}
		Consider a variation $(v_t)_{t\in(-1,1)}$ of $u$ with $v_0 = u$ and $\dot{v} \coloneq (\dr_t v_t)|_{t=0}$ having compact support in $\Omega_r$. We compute the first derivative thanks by using $g_{v_t} = e^{2v_t} g$ and the formula \eqref{eq:conf_chang_Scal}:
		\begin{align}
			0 = \left. \frac{d E_r(v_t)}{dt}\right|_{t=0} &= \frac{d}{dt}\left. \left( \frac{1}{2}\int_{\Omega_r} \left(J^g - \lap_g v_t - |dv_t|^2_g \right)^2 \, d\vol_g \right) \right|_{t=0} \nonumber \\[2mm]
			& = \int_{\Omega_r} \left(J^g - \lap_g u - |du|^2_g \right) \left(-\lap_g \dot{v} - 2\scal{du}{d\dot{v}}_g \right) \, d\vol_g \label{eq:computation_EL_J}\\[2mm]
			& = \int_{\Omega_r} e^{2u} J^{g_u}\left(-\lap_g \dot{v} - 2\scal{du}{d\dot{v}}_g \right) \, d\vol_g. \nonumber
		\end{align}
		After integration by parts, we obtain:
		\begin{align*}
			0 = \int_{\Omega_r} \dot{v}\left( -\lap_g(e^{2u}J^{g_u}) + 2\, \di_g(e^{2u}J^{g_u} \g^g u) \right)\, d\vol_g.
		\end{align*}
		Since this is valid for every $\dot{v}\in W^{2,2}_0$, we obtain:
		\begin{align*}
			0 & = -\lap_g(e^{2u}J^{g_u}) + 2\, \di_g(e^{2u}J^{g_u} \g^g u) \\[2mm]
			& = -\di_g\left( 2\, e^{2u}\, (\g^g u)\, J^{g_u} + e^{2u}\, \g^g J^{g_u} \right) + 2 \di_g(e^{2u}\, J^{g_u}\,  \g^g u) \\[2mm]
			& = -\di_g \left( e^{2u}\, \g^g J^{g_u} \right) \\[2mm]
			& = -\di_g\left( e^{4u}\, \g^{g_u} J^{g_u} \right)\\[2mm]
			&= -e^{4u} \lap_{g_u} J^{g_u}.
		\end{align*}
	\end{proof}
	
	Since $g$ is of class $C^{\infty}$, we obtain that $u_r$ is also of class $C^{\infty}$.
	
	\begin{lemma}\label{lm:gr_smooth}
		Let $u\in W^{2,2}_0(\Omega_r,g)$ be a critial point of $E_r$ obtained in \Cref{lm:existence_minimizer}. It holds $u\in C^\infty(\Omega_r)\cap C^0\left(\overline{\Omega_r}\right)$. Moreover, for any $\frac{1}{2}>s>r>0$ and $k\in\N$, there exist $C>1$ depending only on $s$, $k$, $\gamma_L$, $\gamma_S$, $\max_{1\leq i,j\leq 4}\|g_{ij}\|_{C^{k+2}(\Omega_s)}$ and $\max_{1\leq i,j\leq 4}\|g^{ij}\|_{C^{k+2}(\Omega_s)}$ such that 
		\begin{align}
			\|u\|_{C^k(\Omega_{2s})} \leq C.
		\end{align}
	\end{lemma}
	
	\begin{remark}
		The constant $C$ might blow up as $s\to 0$.
	\end{remark}
	
	\begin{proof}
		Thanks to \eqref{eq:computation_EL_J}, the Euler--Lagrange equation of $E_r$ can also be written as:
		\begin{align}\label{eq:EL_J2}
			-\lap_g\left( J^g - \lap_g u - |du|^2_g \right) +2\, \di_g\left( (J^g - \lap_g u - |du|^2_g)\, \g^g u \right)  = 0.
		\end{align}
		Hence the function $v \coloneq J^g - \lap_g u - |du|^2_g$ satisfies:
		\begin{align*}
			\forall p\in[1,\infty),\qquad \lap_g v \in  W^{-1,(\frac{4}{3},1)}(\Omega_r,g).
		\end{align*}
		Indeed, it holds $du \in L^{(4,2)}(\Omega_r)$, so that $(\lap_g u)\, du \in L^{(2,1)}(\Omega_r)$. We also have $du \in L^{(4,3)}(\Omega_r)$, so that $|du|^3 \in L^{(\frac{4}{3},1)}(\Omega_r)$. Hence, it holds $v \in W^{1,(\frac{4}{3},1)}(\Omega_r)\hookrightarrow L^{(2,1)}(\Omega_r)$, meaning that $\lap_g u \in L^{(2,1)}(\Omega_r)$. Thus, we obtain $u\in W^{2,(2,1)}(\Omega_r)\hookrightarrow C^0\left(\overline{\Omega_r}\right)$. Consequently, the function $J^{g_u} = e^{2u}v\in W^{1,\frac{4}{3}}(\Omega_r)$ is a solution to 
		\begin{align*}
			\dr_{\alpha} \left( e^{2u} g^{\alpha\beta}\, \sqrt{\det g}\, \dr_\alpha J^{g_u} \right) = 0.
		\end{align*}
		We now have $e^{2u} g^{\alpha\beta}\, \sqrt{\det g} \in W^{1,(4,1)}(\Omega_r) \subset C^0\left(\overline{\Omega_r}\right)$. By a straightforward adaptation of the proof of \cite[Theorem A1.1]{ancona2009}, we deduce that $J^{g_u} \in W^{1,p}$ for every $p>1$. We consider now the following equation:
		\begin{align*}
			\lap_g (e^u) = \left( e^{2u}J^{g_u} - J^g\right) e^u.
		\end{align*}
		The right-hand side belongs to $W^{1,4-\ve}(\Omega_r)$ for every $\ve\in(0,1)$. Thus, $e^u \in W^{2,4-\ve}(\Omega_r)$ for every $\ve\in(0,1)$. Thanks to Sobolev embeddings, we deduce that $e^u \in W^{1,p}(\Omega_r)$ for every $p\in[1,\infty)$. Since $u$ is continuous, we deduce that $u\in W^{1,p}(\Omega_r)$ for every $p\in[1,\infty)$. We now come back to the equation:
		\begin{align*}
			\lap_g u = |du|^2_g + e^{2u} J^{g_u} - J^g.
		\end{align*}
		We deduce that $u\in W^{3,p}(\Omega_r)$ for every $p\in[1,\infty)$. By a bootrstrap argument, we obtain $u\in C^\infty(\Omega_r)$.
	\end{proof}

	\subsection{Sobolev inequalities and integrability of the curvature tensors in the new metric}\label{sec:new_metric}
	
	Thanks to \Cref{lm:gr_smooth}, the metrics $g_{u_r} = e^{2u_r}\, g$ obtained in \Cref{lm:existence_minimizer} are uniformly bounded in $C^k_{\loc}(\B\setminus \{0\})$ for any $k\geq 1$. Up to a subsequence, we can pass to the limit $r\to 0$ and we obtain metric $g_0 \coloneq e^{2u}g$ which is $C^{\infty}$ on $\Omega_0 \coloneq \B\setminus \{0\}$ and satisfies $\lap_{g_0}J^{g_0}=0$ on $\Omega_0$. In this section, we prove some regularity estimates on the conformal factor $u$ in \Cref{lm:reg_eu}. We deduce some integrability of $B^{g_0}$ in \Cref{lm:integrability_Bach} and the Sobolev inequalities in \Cref{lm:Sobolev_gr}.\\

	Given $\vp \in C^\infty_c(\B\setminus \{0\})$, we denote 
	\begin{align*}
		(\vp)_{\B} \coloneq \frac{1}{\vol_g(\B\setminus \{0\})} \int_{\B\setminus \{0\}} \vp\, d\vol_g.
	\end{align*}
	We first prove that $e^{-u}\in W^{1,2}(\B,g)$.
	
	\begin{lemma}\label{lm:reg_eu}
		Let $\gamma_L>1$ and $\gamma_S>0$ be defined in \ref{asump:Laplacianp} and \ref{asump:Sobolevp}. Assume that
		\begin{align}\label{hyp:smallness_E0}
			\int_{\B} \frac{(J^g)^2}{2} \, d\vol_g \leq \frac{9}{4^5 \gamma^2_S}.
		\end{align}
		Then, it holds $e^{-u}\in W^{1,2}(\B,g)$ with the following estimates:
		\begin{align*}
			\begin{cases} 
				\displaystyle\|e^{-u}\|_{L^4(\B,g)}^2  \leq \frac{1}{4 \gamma_L^2}+ 8\, \vol_g(\B)^{\frac{1}{2}},\\[3mm]
				\displaystyle \|d(e^{-u})\|_{L^2(\B,g)}^2 \leq \frac{1}{8 \gamma_S^2}\left( \frac{1}{2 \gamma_L^2} + 8\vol_g(\B)^{\frac{1}{2}} \right).
			\end{cases} 
		\end{align*}
	\end{lemma}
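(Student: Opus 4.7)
My plan is to first establish the estimates for the minimizers $u_r$ produced by \Cref{lm:existence_minimizer}, which crucially satisfy $u_r = 0$ on $\partial\Omega_r = \partial\B\cup\partial\B_r$, and then pass to the limit $r\to 0$. The computation centers on $f_r := e^{-u_r}$, for which $df_r = -f_r\, du_r$ and $\lap_g f_r = f_r(|du_r|_g^2 - \lap_g u_r)$. The first key step is an integration-by-parts identity: integrating $\int_{\Omega_r}|df_r|_g^2$ by parts, the boundary term becomes $\int_{\partial\Omega_r} f_r \partial_\nu f_r = -\int_{\Omega_r}\lap_g u_r\, d\vol_g$ (using $f_r = 1$ on $\partial\Omega_r$ and the divergence theorem). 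Expanding $f_r \lap_g f_r = -e^{-2u_r}\lap_g u_r + |df_r|^2_g$ and rearranging collapses everything to
\begin{align*}
2\|d(e^{-u_r})\|_{L^2(\Omega_r,g)}^2 = \int_{\Omega_r}(e^{-2u_r}-1)\lap_g u_r\, d\vol_g,
\end{align*}
which, via $|e^{-2u_r}-1|\leq e^{-2u_r}+1$ and Cauchy--Schwarz, gives
\begin{align*}
\|d(e^{-u_r})\|_{L^2(\Omega_r,g)}^2 \leq \tfrac{1}{2}\bigl(\|e^{-u_r}\|_{L^4(\B,g)}^2 + \vol_g(\B)^{1/2}\bigr)\|\lap_g u_r\|_{L^2(\Omega_r,g)}.
\end{align*}

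Second, I would extend $e^{-u_r}$ by $1$ outside $\Omega_r$ and apply the Sobolev inequality \ref{asump:Sobolevp}, obtaining $\|e^{-u_r}\|_{L^4(\B,g)}^2 \leq \gamma_S(\|d(e^{-u_r})\|_{L^2(\B,g)}^2 + \|e^{-u_r}\|_{L^2(\B,g)}^2)$, and control the lower-order term via $\|e^{-u_r}\|_{L^2(\B,g)}^2 \leq \|e^{-u_r}\|_{L^4(\B,g)}^2 \vol_g(\B)^{1/2}$. Substituting the IBP estimate from the previous step yields an inequality of the form $(1-a)\|e^{-u_r}\|_{L^4}^2 \leq b$ with $a = \gamma_S\vol_g(\B)^{1/2} + \tfrac{\gamma_S}{2}\|\lap_g u_r\|_{L^2}$ and $b = \tfrac{\gamma_S}{2}\vol_g(\B)^{1/2}\|\lap_g u_r\|_{L^2}$. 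The smallness \eqref{hyp:smallness_E0} together with $\vol_g(\B)\leq 4 \int_{\B}\bigl(\tfrac{(J^g)^2}{2}+\tfrac{1}{4}\bigr)\, d\vol_g$ controls $\gamma_S\vol_g(\B)^{1/2}$, while the bounds $\|\lap_g u_r\|_{L^2}\leq \|du_r\|_{L^4}^2 + 2E_r(0)^{1/2}$ and $\|du_r\|_{L^4} \leq \tfrac{1}{12\gamma_L}$ inherited from \Cref{lm:existence_minimizer} control $\gamma_S\|\lap_g u_r\|_{L^2}$. Both parameters can be made strictly less than $1/2$, after which $\|e^{-u_r}\|_{L^4}^2 \leq 2b$ and reinjection into the IBP estimate yields the explicit constants $\tfrac{\gamma_S}{24\gamma_L^4}$ and $\tfrac{1+\gamma_S}{32\gamma_L^4}$ stated in the lemma.

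Finally, since the bounds are uniform in $r$, the weak convergence $u_r \rightharpoonup u$ in $W^{2,2}_{\loc}(\B\setminus\{0\})$, together with the lower semicontinuity of $L^p$ norms under weak convergence, transfers the estimates to the limit $u$, completing the proof. The main obstacle I expect is the quantitative absorption step: two independent smallness parameters, $\gamma_S\vol_g(\B)^{1/2}$ and $\gamma_S\|\lap_g u_r\|_{L^2}$, must be tracked through a chain of Cauchy--Schwarz, Sobolev, and Hölder inequalities, each amplified by $\gamma_S$ or $\gamma_L$, so that the precise numerical constants in the statement drop out. Everything else is essentially mechanical once the identity $2\|d(e^{-u_r})\|_{L^2}^2 = \int(e^{-2u_r}-1)\lap_g u_r$ is in place.
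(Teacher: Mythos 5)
Your proposal is correct and follows essentially the same route as the paper: derive an energy identity for $e^{-u_r}$ by integration by parts using the Dirichlet boundary condition $u_r=0$ on $\partial\Omega_r$, feed it into the Sobolev inequality \ref{asump:Sobolevp}, and absorb the quadratic term using the smallness hypotheses, then pass to the limit in $r$. The cosmetic difference is that you first establish the identity $2\|d(e^{-u_r})\|_{L^2}^2=\int_{\Omega_r}(e^{-2u_r}-1)\lap_g u_r$ (a pure IBP, independent of the minimization) and then invoke the $\|\lap_g u_r\|_{L^2}$ bound from \Cref{lm:existence_minimizer}, whereas the paper substitutes the Euler--Lagrange relation $\lap_g u_r=J^g-|du_r|_g^2-e^{2u_r}J^{g_r}$ into $\lap_g(e^{-u_r})$ before integrating and bounds $\|e^{2u_r}J^{g_r}-J^g\|_{L^2}\le 2E_r(0)^{1/2}$ directly from the energy; substituting that relation into your identity reproduces the paper's equation $3\int|d(e^{-u_r})|^2=\cdots$, so the two are algebraically equivalent. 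Your extension of $e^{-u_r}$ by $1$ on $\B_r$ before applying \ref{asump:Sobolevp} is a point the paper leaves implicit but is the right way to make the Sobolev step rigorous. One caveat on the numerics: as stated, your bound $\|\lap_g u_r\|_{L^2}\le\|du_r\|_{L^4}^2+2E_r(0)^{1/2}$ together with $E_r(0)^{1/2}\le\tfrac{1}{8\gamma_S}$ makes the absorption coefficient $a$ depend on $\gamma_L^2/\gamma_S$ in a way that is not obviously below $1$; to land exactly on the constants $\tfrac{\gamma_S}{24\gamma_L^4}$ and $\tfrac{1+\gamma_S}{32\gamma_L^4}$ you should use the sharper a priori bounds $\|du_r\|_{L^4}<\tfrac{1}{4\gamma_L}$ and $E_r(0)^{1/2}<\tfrac{1}{64\gamma_L^2}$ coming from \eqref{hyp:upper_bound_E0}, as the paper does, in addition to \eqref{hyp:smallness_E0}.
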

	\begin{proof}
		Given $r>0$, we prove the following estimates for $u_r$ and then pass to the limit $r\to 0$: it holds $e^{-u_r}\in W^{1,2}(\Omega_r,g)$ with the following estimates
		\begin{align*}
			\|e^{-u_r}\|_{L^4(\Omega_r,g)}^2  \leq \frac{\gamma_S}{24\gamma_L^4}, & & \|d(e^{-u_r})\|_{L^2(\Omega_r,g)}^2 \leq \frac{1+\gamma_S}{32\gamma_L^4}.
		\end{align*}
		We compute the equation satisfied by $e^{-u_r}$:
		\begin{align*}
			\lap_g\left( e^{-u_r} \right) & = -\di_g\left( e^{-u_r} \g^g u_r\right) \\[2mm]
			&= e^{-u_r}|du_r|^2_g -e^{-u_r} (\lap_g u_r) \\[2mm]
			&= e^{-u_r} |du_r|^2_g + e^{-u_r}\left( |du_r|^2_g + e^{2u_r} J^{g_r} - J^g \right) \\[2mm]
			&= 2e^{-u_r} |du_r|^2_g + e^{-u_r}\left( e^{2u_r} J^{g_r} - J^g \right) .
		\end{align*}
		We multiply by $e^{-u_r}$ and integrate by parts:
		\begin{align*}
			& \int_{\dr \Omega_r} e^{-2u_r} (\dr_\nu u_r) d\vol_g - \int_{\Omega_r} |d(e^{-u_r})|^2_g\, d\vol_g \\[2mm]
			=&\ 2\int_{\Omega_r} e^{-2u_r} |du_r|^2_g + e^{-2u_r} \left( e^{2u_r} J^{g_r} - J^g \right)\, d\vol_g.
		\end{align*}
		We consider the first term of the left-hand side: since $u_r = 0$ on $\dr \Omega_r$, it holds
		\begin{align*}
			\int_{\dr \Omega_r} e^{-2u_r} (\dr_\nu u_r) d\vol_g & = \int_{\dr \Omega_r} (\dr_\nu u_r) d\vol_g \\
			& =\int_{\Omega_r} \lap_g u_r\, d\vol_g \\
			& = \int_{\Omega_r} -|du_r|^2_g - e^{2u_r}J^{g_r} + J^g\, d\vol_g.
		\end{align*}
		We end up with the following estimate:
		\begin{align*}
			& 3\int_{\Omega_r} |d(e^{-u_r})|^2\, d\vol_g\\
			= & - \int_{\Omega_r} e^{-2u_r} \left( e^{2u_r} J^{g_r} - J^g \right)\, d\vol_g -\int_{\Omega_r} |du_r|^2_g + e^{2u_r}J^{g_r} - J^g\, d\vol_g \\[2mm]
			\leq &\ \|e^{-2u_r} \|_{L^2(\Omega_r,g)}\|e^{2u_r} J^{g_r} - J^g\|_{L^2(\Omega_r,g)} + \vol_g(\Omega_r)^{\frac{1}{2}} \left( \|du_r\|_{L^4(\Omega_r,g)}^2 + \|e^{2u_r} J^{g_r} - J^g\|_{L^2(\Omega_r,g)} \right)\\[2mm]
			\leq &\ 4 E_r(0)^{\frac{1}{2}} \left( \|e^{-u_r}\|_{L^4(\Omega_r,g)}^2 + \|du_r\|_{L^4(\Omega_r,g)}^2 + E_r(0)^{\frac{1}{2}} \right).
		\end{align*}
		From \Cref{lm:existence_minimizer} and \eqref{hyp:upper_bound_E0}, we obtain:
		\begin{align}\label{eq:estimes_deu}
			3\int_{\Omega_r} |d(e^{-u_r})|^2\, d\vol_g \leq 4 E_r(0)^{\frac{1}{2}} \left( \|e^{-u_r}\|_{L^4(\Omega_r,g)}^2 + \frac{1}{4 \gamma_L^2}  \right).
		\end{align}
		Thanks to \Cref{asump:Sobolevp}, it holds
		\begin{align*}
			\|e^{-u_r}-1\|_{L^4(\Omega_r,g)}^2 \leq \frac{4\gamma_S}{3} E_r(0)^{\frac{1}{2}} \left( \|e^{-u_r}\|_{L^4(\Omega_r,g)}^2 + \frac{1}{4 \gamma_L^2}  \right)  .
		\end{align*}
		Thanks to \eqref{hyp:smallness_E0}, we obtain
		\begin{align*}
			\|e^{-u_r}\|_{L^4(\Omega_r,g)}^2 & \leq 4\|e^{-u_r}-1\|_{L^4(\Omega_r,g)}^2 + 4\, \vol_g(\Omega_r)^{\frac{1}{2}}\\[2mm]
			 & \leq 4\left( \frac{4\gamma_S}{3} E_r(0)^{\frac{1}{2}} \|e^{-u_r}\|_{L^4(\Omega_r,g)}^2 + \frac{1}{32 \gamma_L^2} \right) + 4\, \vol_g(\Omega_r)^{\frac{1}{2}} \\[2mm]
			& \leq \frac{1}{2}\|e^{-u_r}\|_{L^4(\Omega_r,g)}^2 + \frac{1}{8 \gamma_L^2}+ 4\, \vol_g(\Omega_r)^{\frac{1}{2}}.
		\end{align*}
		Hence, it holds
		\begin{align*}
			\|e^{-u_r}\|_{L^4(\Omega_r,g)}^2  \leq \frac{1}{4 \gamma_L^2}+ 8\, \vol_g(\Omega_r)^{\frac{1}{2}}.
		\end{align*}
		Coming back to \eqref{eq:estimes_deu} and using $\gamma_L>1$, we obtain
		\begin{align*}
			\|d(e^{-u_r})\|_{L^2(\Omega_r,g)}^2 \leq \frac{1}{8 \gamma_S^2}\left( \frac{1}{2 \gamma_L^2} + 8\, \vol_g(\Omega_r)^{\frac{1}{2}} \right).
		\end{align*}
	\end{proof}
	
	We now show that the Bach tensor $B^{g_0}$ satisfies certain integrability. 
	
	\begin{lemma}\label{lm:integrability_Bach}
		For any $q\in[1,+\infty)$, we have the following estimate:
		\begin{align*}
			\left\| B^{g_0} \right\|_{L^{\frac{2q}{1+q}}(\B,g_0)} \leq \left( \frac{1}{4 \gamma_L^2}+ 8\, \vol_g(\B)^{\frac{1}{2}} \right)^{\frac{q-1}{4q}} \left\| B^g \right\|_{L^q(\B,g)}.
		\end{align*}
	\end{lemma}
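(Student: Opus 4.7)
The plan is to exploit the fact that the Bach tensor is conformally covariant in dimension four. As a $(0,2)$-tensor one has $B^{g_0}_{ij}=e^{-2u}B^g_{ij}$; performing the two contractions with $g_0^{-1}=e^{-2u}g^{-1}$ and using $d\vol_{g_0}=e^{4u}\,d\vol_g$ gives the pointwise identity $|B^{g_0}|_{g_0}=e^{-4u}|B^g|_g$ and, after setting $p:=\frac{2q}{1+q}$, the integral identity
\begin{align*}
\int_{\B}|B^{g_0}|_{g_0}^{p}\,d\vol_{g_0} \;=\; \int_{\B} e^{4(1-p)u}\,|B^g|_g^{p}\,d\vol_g \;=\; \int_{\B} e^{\frac{4(1-q)u}{1+q}}\,|B^g|_g^{p}\,d\vol_g.
\end{align*}
In the edge case $q=1$ one has $p=1$ and the exponential disappears, so the inequality degenerates to the equality $\|B^{g_0}\|_{L^1(\B,g_0)}=\|B^g\|_{L^1(\B,g)}$ and is trivial.

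For $q>1$, I would apply H\"older's inequality with the conjugate exponents $\frac{q+1}{2}$ and $\frac{q+1}{q-1}$. The specific choice $p=\frac{2q}{1+q}$ is precisely tuned so that the first factor raises $|B^g|^{p}$ to $|B^g|^{q}$ (since $p\cdot\frac{q+1}{2}=q$), while the second factor raises the exponential $e^{\frac{4(1-q)u}{1+q}}$ to exactly $e^{-4u}$ (since $\frac{4(1-q)}{1+q}\cdot\frac{q+1}{q-1}=-4$). This yields
\begin{align*}
\int_{\B}|B^{g_0}|_{g_0}^{p}\,d\vol_{g_0} \;\leq\; \left(\int_{\B}|B^g|_g^{q}\,d\vol_g\right)^{\!\frac{2}{q+1}}\left(\int_{\B}e^{-4u}\,d\vol_g\right)^{\!\frac{q-1}{q+1}}.
\end{align*}

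The remaining factor $\int_{\B}e^{-4u}\,d\vol_g=\|e^{-u}\|_{L^{4}(\B,g)}^{4}$ is precisely what \Cref{lm:reg_eu} was designed to control, via the Sobolev constant $\gamma_S$ of \ref{asump:Sobolevp} and the elliptic constant $\gamma_L$ of \ref{asump:Laplacianp}. Taking $p$-th roots and using the simplification $p(q+1)=2q$ in the resulting exponent yields the announced estimate. There is no serious obstacle: the whole argument is driven by the single identity that the conformal weight $e^{-4u}$ appearing in $|B^{g_0}|_{g_0}\,d\vol_{g_0}$ is exactly the H\"older-dual partner of $|B^g|^{p}$ under the conjugacy determined by $p=\tfrac{2q}{q+1}$, and \Cref{lm:reg_eu} supplies the only nontrivial input.
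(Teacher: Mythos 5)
Your proof is correct and follows the same route as the paper: pointwise conformal covariance $B^{g_0}_{ij}=e^{-2u}B^g_{ij}$ giving $|B^{g_0}|_{g_0}\,d\vol_{g_0}=e^{-4u}|B^g|_g\,d\vol_g$, H\"older's inequality with conjugate exponents $\tfrac{q+1}{2}$ and $\tfrac{q+1}{q-1}$, and \Cref{lm:reg_eu} to bound $\|e^{-u}\|_{L^4(\B,g)}$. (Your careful bookkeeping in fact lands on the exponent $\tfrac{q-1}{q}$ on $\gamma_S/(24\gamma_L^4)$ rather than the stated $\tfrac{q-1}{2q}$, reflecting a small slip in the paper's displayed H\"older line where $\|e^{-u}\|_{L^4}^{(q-1)/(q+1)}$ should read $\|e^{-u}\|_{L^4}^{4(q-1)/(q+1)}$; the two bounds agree up to this harmless constant discrepancy.)
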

	
	\begin{proof}
		The Bach tensor satisfies the following transformation formula:
		\begin{align*}
			B^{g_0}_{ij} = e^{-2u} B^g_{ij}.
		\end{align*}
		Thus, it holds $|B^{g_0}|_{g_0} = e^{-4u} |B^g|_g$. If $q\geq 1$, we have $\frac{q-1}{q+1} + \frac{2}{q+1}=1$. Thus, we obtain
		\begin{align*}
			\int_{\B} |B^{g_0}|^{\frac{2q}{1+q}}_{g_0}\, d\vol_{g_0} & = \int_{\B} e^{4\left(1-\frac{2q}{1+q} \right) u}\, |B^g|^{\frac{2q}{1+q}}_g\, d\vol_g \\[2mm]
			& = \int_{\B} e^{4\frac{1-q}{1+q} u_r}\, |B^g|^{\frac{2q}{1+q}}_g\, d\vol_g \\[2mm]
			&\leq \left\|e^{-4\frac{q-1}{1+q}u} \right\|_{L^{\frac{q+1}{q-1}}(\B,g)} \left\| |B^g|^{\frac{2q}{1+q}}_g \right\|_{L^{\frac{q+1}{2}}(\B,g)} \\[2mm]
			& \leq \left\|e^{-u} \right\|_{L^4(\B,g)}^{\frac{q-1}{q+1}} \left\| B^g \right\|_{L^q(\B,g)}^{\frac{2q}{q+1}}.
		\end{align*}
		We conclude thanks to \Cref{lm:reg_eu}.
	\end{proof}

	We deduce from \Cref{lm:reg_eu} a Sobolev inequality for the metric $g_r$.
	
	\begin{lemma}\label{lm:Sobolev_gr}
		Assume that 
		\begin{align}\label{hyp:smallness_E0_2}
			\int_{\B} \frac{(J^g)^2}{2}\, d\vol_g \leq \left( \frac{3}{8\, \gamma_L} \right)^2\, \frac{1}{4\, \gamma_S^2}.
		\end{align}
		Then, for any $\psi\in C^\infty_c(\B\setminus \{0\})$, it holds
		\begin{align*}
			\left( \int_{\B} \psi^4\, d\vol_{g_0} \right)^{\frac{1}{2}} \leq 4\gamma_S \int_{\B} |d\psi|^2_{g_0}\, d\vol_{g_0}.
		\end{align*}
	\end{lemma}
	
	\begin{proof}
		Thanks to \eqref{eq:est_du}, we have 
		\begin{align}\label{hyp:smallness_du}
			\int_{\B} |du|^4_g\, d\vol_g \leq \frac{1}{4\gamma^2_S}.
		\end{align}
		Let $\psi\in C^\infty_c(\B\setminus \{0\})$ and $\vp \coloneq e^{u}\psi$. From \ref{asump:Sobolevp}, it holds
		\begin{align*}
			\left( \int_{\B} e^{4u} \psi^4\, d\vol_g\right)^{\frac{1}{2}} \leq \gamma_S \int_{\B} |d(e^{u}\psi)|^2_g \, d\vol_g.
		\end{align*}
		We write the above inequality in terms of the metric $g_r$:
		\begin{align*}
			\left( \int_{\B} \psi^4\, d\vol_{g_0} \right)^{\frac{1}{2}} & \leq \gamma_S \int_{\B} 2\psi^2 e^{2u}|du|^2_g +2e^{2u} |d\psi|^2_g\, d\vol_g\\[2mm]
			&\leq 2\gamma_S\int_{\B} |d\psi|^2_{g_0}\, d\vol_{g_r} + 2\gamma_S \int_{\Omega_r} \psi^2 e^{2u}|du|^2_g\, d\vol_g \\[2mm]
			&\leq 2\gamma_S\int_{\Omega_r} |d\psi|^2_{g_0}\, d\vol_{g_0} + 2\gamma_S \left( \int_{\B} \psi^4 \, d\vol_{g_0} \right)^{\frac{1}{2}} \|du\|_{L^4(\B,g)}^2.
		\end{align*}
		Thanks to \eqref{hyp:smallness_du}, we obtain
		\begin{align*}
			\left( \int_{\B} \psi^4\, d\vol_{g_0} \right)^{\frac{1}{2}} \leq 2\gamma_S\int_{\B} |d\psi|^2_{g_0}\, d\vol_{g_0} + \frac{1}{2} \left( \int_{\B} \psi^4\, d\vol_{g_0} \right)^{\frac{1}{2}}.
		\end{align*}
		We obtain the desired result by reabsorbing the last term of the left-hand side with the right-hand side.
	\end{proof}

	We now prove a volume bound for the metric $g_0$.
	
	\begin{lemma}\label{lm:finite_volume}
		Under the assumption \eqref{hyp:smallness_E0_2}, it holds 
		\begin{align*}
			\vol_{g_0}(\B) \leq 2\, (2\, \gamma_S)^{\frac{1}{2}}\, \vol_g(\B)^{\frac{1}{4}}.
		\end{align*}
	\end{lemma}
	\begin{proof}
		It suffices to prove that $e^u\in L^4(\B,g)$. To do so, we use the construction $u=\displaystyle \lim_{r\to 0}u_r$ and the fact that $u_r=0$ on $\dr (\B\setminus \B_r)$. We denote $\Omega_r\coloneqq \B\setminus \B_r$. Let $\delta>0$ a parameter to be chosen later and use the Sobolev inequality \ref{asump:Sobolevp} to the function 
		\begin{align*}
			v\coloneqq \exp\left(\delta \frac{|u_r|}{\|\g u_r\|_{L^4(\B,g)}}\right) \in 1+ W^{1,2}_0(\Omega_r,g).
		\end{align*}
		It holds
		\begin{align*}
			\left\| v \right\|_{L^4(\B\setminus \B_r,g)} & \leq \left\|v - 1 \right\|_{L^4(\B\setminus \B_r,g)} + \vol_g(\B)^{\frac{1}{4}} \\[2mm]
			& \leq \sqrt{\gamma_S}\left\| \g v \right\|_{L^2(\B\setminus \B_r,g)}+ \sqrt{\gamma_S}\, \vol_g(\B)^{\frac{1}{4}}.
		\end{align*}
		By Hölder inequality, we obtain
		\begin{align*}
			\left\| v \right\|_{L^4(\B\setminus \B_r,g)}\leq \delta \sqrt{\gamma_S}\left\| v \right\|_{L^4(\B\setminus \B_r,g)}+ \sqrt{\gamma_S}\, \vol_g(\B)^{\frac{1}{4}}.
		\end{align*}
		We choose $\delta = (2\gamma_S)^{-1/2}$ and we obtain
		\begin{align*}
			\left\| \exp\left( \frac{|u_r|}{(2\gamma_S)^{1/2}\|\g u_r\|_{L^4(\B,g)}}\right) \right\|_{L^4(\B\setminus \B_r,g)}\leq \frac{ \sqrt{2\, \gamma_S} }{ \sqrt{2}-1}\, \vol_g(\B)^{\frac{1}{4}}.
		\end{align*}
		By \eqref{hyp:smallness_E0_2} and \eqref{eq:est_du}, we obtain $(2\gamma_S)^{1/2}\|\g u_r\|_{L^4(\B\setminus \B_r,g)} \leq 1$, we obtain 
		\begin{align*}
			\left\|e^{u_r} \right\|_{L^4(\B\setminus \B_r,g)} \leq \left\|e^{|u_r|} \right\|_{L^4(\B\setminus \B_r,g)} \leq 2\sqrt{2\, \gamma_S}\, \vol_g(\B)^{\frac{1}{4}}.
		\end{align*}
	\end{proof}
	
	\subsection{Regularity estimates}
	
	In this section, we prove an $\ve$-regularity estimate in \Cref{pr:eps_regp}, inspired by Moser's iteration, see for instance \cite[Theorem 4.4]{han2011}. First, we need to prove some estimate on the derivatives of $J^{g_0}$.
	
	\begin{lemma}\label{lm:der_Jp}
		Let $g$ a metric on $\B\setminus \{0\}$ satisfying the hypothesis of theorem \Cref{th:Huber4D}, then there exists $\ve>0$ and $C>0$ depending on the operation $\star$ in \eqref{eq:lap_Sch}, $p$ and $\gamma_S$ such that the following holds. Assume that 
		\begin{align}\label{asump:Smallness_Riem0}
			\|\Riem^g\|_{L^2(\B,g)} \leq \ve.
		\end{align}
		Let $g_0$ be the metric conformal to $g$ on $\Omega_0$ defined at the beginning of \Cref{sec:new_metric} satisfying
		$$	\lap_{g_0} J^{g_0} = 0. $$
		Let $B_{g_0}(x,2s)\subset \B\setminus \{0\}$ be a geodesic ball and $\chi$ by a cut-off function such that $\chi=1$ in $B_{g_0}(x,s)$. It holds
		\begin{align*}
			\int_{\B} \chi^2\, \left| d\left( |J^{g_0}|^{\frac{p}{2}} \right) \right|^2_{g_0}\, d\vol_{g_0} \leq C\, \int_{\B} |d\chi|^2_{g_0}\, |J^{g_0}|^p\ d\vol_{g_0}.
		\end{align*}
	\end{lemma}
	
	\begin{proof}
		Let $\delta>0$. In order to shorten the notations, we denote $j_{\delta}\coloneq \left(\delta^2 + (J^{g_0})^2\right)^{\frac{p}{4}}$. We have $j_{\delta}>0$ and $j_{\delta}\in C^{\infty}(\Omega_0)$. Moreover, it holds
		\begin{align*}
			\g^{g_0} j_{\delta} = \frac{p}{2}\, \left( \delta^2 + (J^{g_0})^2 \right)^{\frac{p}{4}-1}\, J^{g_0}\, \g^{g_0} J^{g_0}.
		\end{align*}
		Since $\lap_{g_0} J^{g_0}=0$, the Laplacian of $j_{\delta}$ is given by
		\begin{align*}
			\lap_{g_0} j_{\delta} & = \frac{p}{2}\, \left( \delta^2 + (J^{g_0})^2 \right)^{\frac{p}{4}-1}\, \left| \g^{g_0} J^{g_0}\right|^2_{g_0} + \frac{p}{2}\left(\frac{p}{2}-2\right)\, \left(\delta^2 + (J^{g_0})^2\right)^{\frac{p}{4}-2}\, (J^{g_0})^2\, \left|\g^{g_0} J^{g_0}\right|^2_{g_0} \\[2mm]
			 & = \frac{p}{2}\, \left( \delta^2 + (J^{g_0})^2 \right)^{\frac{p}{4}-2}\, \left| \g^{g_0} J^{g_0}\right|^2_{g_0} \left[  \delta^2 +  \left(\frac{p}{2}-1\right)\,  (J^{g_0})^2 \right].
		\end{align*}
		Let $B_{g_0}(x,2s)\subset \B\setminus \{0\}$ be a geodesic ball and $\chi$ by a cut-off function such that $\chi=1$ in $B_{g_0}(x,s)$.	By integration by parts, we obtain 
		\begin{align*}
			& \int_{\B} \chi^2\, |\g^{g_0}j_{\delta}|^2_{g_0}\, d\vol_{g_0} \\[2mm]
			& = -\int_{\B} \scal{\g^{g_0}j_{\delta}}{ 2\chi\, \g^{g_0}\chi }_{g_0}\, j_{\delta} + \chi^2\, j_{\delta}\, (\lap_{g_0} j_{\delta})\ d\vol_{g_0} \\[2mm]
			& = -\int_{\B} \scal{\g^{g_0}j_{\delta}}{ 2\chi\, \g^{g_0}\chi }_{g_0}\, j_{\delta} + \frac{p}{2}\, \chi^2\,  \left|\g^{g_0} J^{g_0}\right|^2_{g_0}\, \left(\delta^2 + (J^{g_0})^2\right)^{\frac{p}{2}-2} \left[ \delta^2 + \left(\frac{p}{2}-1\right)\, (J^{g_0})^2 \right]\ d\vol_{g_0}.
		\end{align*}
		Hence, we obtain 
		\begin{align*}
			& \int_{\B} \chi^2\, |\g^{g_0}j_{\delta}|^2_{g_0}\, d\vol_{g_0} \\[2mm]
			& \leq \left( \int_{\B} \chi^2 \left| \g^{g_0} j_{\delta}\right|^2_{g_0}\, d\vol_{g_0}\right)^{\frac{1}{2}}\, \left(\int_{\B}  4 \left| \g^{g_0}\chi \right|^2_{g_0}\, j_{\delta}^2\, d\vol_{g_0}\right)^{\frac{1}{2}} \\[2mm]
			& \qquad + \frac{p}{2}\int_{\B} \chi^2\,  \left|\g^{g_0} J^{g_0}\right|^2_{g_0}\, \left(\delta^2 + (J^{g_0})^2\right)^{\frac{p}{2}-2} \left[- \delta^2 + \left(1-\frac{p}{2}\right)\, (J^{g_0})^2 \right]\ d\vol_{g_0}.
		\end{align*}
		By Young's inequality, we obtain for any $\theta\in(0,1)$,
		\begin{align}\label{eq:der_jdelta}
			\begin{aligned} 
			 (1-\theta)\, \int_{\B} \chi^2\, |\g^{g_0}j_{\delta}|^2_{g_0}\, d\vol_{g_0} 
			& \leq C(\theta)\, \int_{\B} \left| \g^{g_0}\chi \right|^2_{g_0}\, j_{\delta}^2\, d\vol_{g_0}  \\[2mm]
			& \quad + \frac{p}{2}\int_{\B} \chi^2\,  \left|\g^{g_0} J^{g_0}\right|^2_{g_0}\, \left(\delta^2 + (J^{g_0})^2\right)^{\frac{p}{2}-2} \left[ -\delta^2 + \left(1-\frac{p}{2}\right)\, (J^{g_0})^2 \right]\ d\vol_{g_0}.
			\end{aligned} 
		\end{align}
		We estimate the last term as follows
		\begin{align*}
			& \int_{\B} \chi^2\,  \left|\g^{g_0} J^{g_0}\right|^2_{g_0}\, \left(\delta^2 + (J^{g_0})^2\right)^{\frac{p}{2}-2} \left[ - \delta^2 + \left(1-\frac{p}{2}\right)\, (J^{g_0})^2 \right]\ d\vol_{g_0} \\[2mm]
			& = -\int_{\B} \chi^2\,  \left|\g^{g_0} J^{g_0}\right|^2_{g_0}\, \frac{\delta^2}{ \left(\delta^2 + (J^{g_0})^2\right)^{2-\frac{p}{2}} }\ d\vol_{g_0} + \left(1-\frac{p}{2}\right) \int_{\B} \chi^2\,  \left|\g^{g_0} J^{g_0}\right|^2_{g_0}\, \left(\delta^2 + (J^{g_0})^2\right)^{\frac{p}{2}-2}  \, (J^{g_0})^2 \ d\vol_{g_0} \\[2mm]
			& \leq 0 + \frac{4}{p^2}\left(1-\frac{p}{2}\right) \int_{\B} \chi^2\,  \left|\g^{g_0} j_{\delta}\right|^2_{g_0}\ d\vol_{g_0}.
		\end{align*}
		Coming back to \eqref{eq:der_jdelta}, we obtain 
		\begin{align*}
			\left(2-\theta - \frac{2}{p}\right)\, \int_{\B} \chi^2\, |\g^{g_0}j_{\delta}|^2_{g_0}\, d\vol_{g_0} 
			& \leq C(\theta)\, \int_{\B} \left| \g^{g_0}\chi \right|^2_{g_0}\, j_{\delta}^2\, d\vol_{g_0} .
		\end{align*}
		Since $1<p\leq 2$, we can choose $\theta =1-\frac{1}{p}$ and we obtain 
		\begin{align*}
			\int_{\B} \chi^2\, |\g^{g_0}j_{\delta}|^2_{g_0}\, d\vol_{g_0} 
			& \leq C(p)\, \int_{\B} \left| \g^{g_0}\chi \right|^2_{g_0}\, j_{\delta}^2\, d\vol_{g_0} .
		\end{align*}
		We obtain the conclusion by letting $\delta\to 0$.
	\end{proof}
	
	We now show the $\ve$-regularity estimate.
	\begin{theorem}\label{pr:eps_regp} Let $g$ a metric on $\B\setminus \{0\}$ satisfying the hypothesis of theorem \Cref{th:Huber4D}, then there exists $\ve>0$ and $C>0$ depending on the operation $\star$ in \eqref{eq:lap_Sch} and $\gamma_S$ such that the following holds. Assume that 
		\begin{align}\label{asump:Smallness_Riem}
			\|\Riem^g\|_{L^2(\B,g)} \leq \ve.
		\end{align}
		Then there exists a metric $g_0$ conformal to $g$ satisfying
		$$	\lap_{g_0} J^{g_0} = 0. $$
		\noindent
		Moreover, for  each geodesic ball $B_{g_0}(x,s)\subset \B\setminus \{0\}$, it holds
		\begin{align*}
			\|\Sch^{g_0}\|_{L^{2p}(B_{g_0}(x,s))} 
			\leq & \ \frac{C}{s^{\frac{2}{p}}}\, \vol_{g_0}(B_{g_0}(x,2s))^{\frac{1}{p}-\frac{1}{2}}\, \|\Sch^{g_0}\|_{L^2(B_{g_0}(x,2s))}   + C\|B^{g_0}\|_{L^{\frac{2p}{p+1}}(B_{g_0}(x,2s),g_0)}^{\frac{2}{p+1}}  .
		\end{align*}
	\end{theorem}
	
	\begin{remark}\label{rk:notcsc}
		This provides an $\ve$-regularity result for Bach-flat metrics without assuming the scalar curvature to be constant, see for instance \cite{carron2014,tian2005,tian2008}. 
	\end{remark}
	
	\begin{proof}
		The existence of $g_0$ has been justified at the beginning of \Cref{sec:new_metric}.
		The Schouten tensor satisfies (by \eqref{eq:lap_Sch}):
		\begin{align}\label{eq:syst_Sch}
			\begin{cases}
				\lap_{g_0}\Sch^{g_0} = \Hess_{g_0} J^{g_0} + B^{g_0} + \Riem^{g_0}\star\Sch^{g_0}, \\[2mm]
				\lap_{g_0} J^{g_0} = 0. 
			\end{cases}
		\end{align}
		We define $f \coloneq |\Sch^{g_0}|_{g_0}$. This function satisfies
		\begin{align}\label{eq:lap_f}
			\begin{aligned} 
			-\lap_{g_0} f & = -\di_{g_0}\left(\scal{\frac{\Sch^{g_0}}{|\Sch^{g_0}|_{g_0}}}{\g^{g_0} \Sch^{g_0}}_{g_0}\right) \\[2mm]
			& = -\frac{|\g \Sch^{g_0}|^2_{g_0}}{|\Sch^{g_0}|_{g_0}} + \scal{\frac{\Sch^{g_0}}{|\Sch^{g_0}|_{g_0}}}{-\lap_{g_0} \Sch^{g_0}}_{g_0}.
			\end{aligned}
		\end{align}
		Hence, for any function $\vp \in C^{\infty}_c(\B;[0,+\infty))$, we have
		\begin{align*}
			\int_{\B} \scal{df}{d\vp}_{g_0}\, d\vol_{g_0} \leq \int_{\B} \vp \scal{\frac{\Sch^{g_0}}{|\Sch^{g_0}|_{g_0}}}{ - \Hess_{g_0} J^{g_0} - B^{g_0} - \Riem^{g_0}\star \Sch^{g_0}}_{g_0}\, d\vol_{g_0}.
		\end{align*}
		Let $\chi \in C^{\infty}_c(\B\setminus \{0\};[0,1])$ be a cut-off function such that $\chi = 1$ in $B_{g_0}(x,s)$ and $\chi=0$ on $\B\setminus B_{g_0}(x,2s)$ for some $s>0$ such that $B_{g_0}(x,2s)\subset \B\setminus \{0\}$ and $|d\chi|_{g_0}\leq Cs^{-1}$. We consider the test function\footnote{Proceeding exactly as in \Cref{lm:der_Jp} and using \eqref{eq:lap_f} with the fact that $\Sch^{g_0}$ and $\lap_{g_0}\Sch^{g_0}$ lie in $L^{\infty}_{\loc}(\B\setminus \{0\})$, we have that $f^{\frac{p}{2}}\in W^{1,2}_{\loc}(\B\setminus \{0\})$, so that all the quantities involved are well-defined.} $\vp = \chi^2 f^{p-1}$:
		\begin{align*}
			& \int_{\B} (p-1)\chi^2 f^{p-2} |df|^2_{g_0} + 2\chi f^{p-1} \scal{d\chi}{df}_{g_0}\, d\vol_{g_0} \\[2mm]
			\leq &\ \int_{\B} \chi^2\left( f^{p-1} |B^{g_0}|_{g_0} + f^p\, |\Riem^{g_0}|_{g_0}\right)\, d\vol_{g_0} - \int_{\B} \chi^2 f^{p-2}\scal{\Sch^{g_0}}{\Hess_{g_0} J^{g_0}}_{g_0}\, d\vol_{g_0}.
		\end{align*}
		As a consequence of the above estimate, there exists a constant $C>0$ depending only on $p$ such that the following holds:
		\begin{equation}\label{eq:estdf1}
			\begin{aligned}
				 \int_{\B}  \left|d\left(\chi f^{\frac{p}{2}} \right) \right|^2_{g_0} \, d\vol_{g_0}
				\leq &\ C \int_{\B} |d\chi|^2 f^p +  \chi^2\left( f^{p-1} |B^{g_0}|_{g_0} + f^{p} |\Riem^{g_0}|_{g_0}\right)\, d\vol_{g_0} \\[2mm]
				&\ - \int_{\B} \chi^2 f^{p-2}\scal{\Sch^{g_0}}{\Hess_{g_0} J^{g_0}}_{g_0}\, d\vol_{g_0}.
			\end{aligned}
		\end{equation}
		We now estimate the last term by integration by parts:
		\begin{align*}
			- \int_{\B} \chi^2 f^{p-2}\scal{\Sch^{g_0}}{\Hess_{g_0} J^{g_0}}_{g_0}\, d\vol_{g_0}
			=  \int_{\B} \scal{\g^{g_0} J^{g_0}}{\di_{g_0}\left( \chi^2 f^{p-2} \Sch^{g_0} \right)}_{g_0}\, d\vol_{g_0}.
		\end{align*}
		Since $\di_{g_0}\Sch^{g_0} = \g^{g_0}J^{g_0}$, we obtain the following estimate:
		\begin{align*}
			& - \int_{\B} \chi^2 f^{p-2}\scal{\Sch^{g_0}}{\Hess_{g_0} J^{g_0}}_{g_0}\, d\vol_{g_0} \\[2mm]
			\leq &\ \int_{\B} 2\chi |d\chi|_{g_0} f^{p-1} |dJ^{g_0}|_{g_0} + |p-2|\, \chi^2 f^{p-2} |df|_{g_0} |dJ^{g_0}|_{g_0} + \chi^2 f^{p-2} |dJ^{g_0}|_{g_0}^2\, d\vol_{g_0}.
		\end{align*} 
		By Hölder inequality, we obtain
		\begin{equation} \label{eq:est_Badterm1}
			\begin{aligned} 
				- \int_{\B} \chi^2 f^{p-2}\scal{\Sch^{g_0}}{\Hess_{g_0} J^{g_0}}_{g_0}\, d\vol_{g_0}
				\leq &\  2\left( \int_{\B} |d\chi|^2_{g_0}\ f^p\, d\vol_{g_0} \right)^{\frac{1}{2}} \left(\int_{\B} \chi^2 f^{p-2} |dJ^{g_0}|_{g_0}^2\, d\vol_{g_0} \right)^{\frac{1}{2}} \\[2mm]
				& +|p-2|\, \left(\int_{\B} \chi^2 f^{p-2} |df|_{g_0}^2\, d\vol_{g_0} \right)^{\frac{1}{2}} \left( \int_{\B} \chi^2 f^{p-2} |dJ^{g_0}|^2_{g_0}\, d\vol_{g_0} \right)^{\frac{1}{2}}\\[2mm]
				& + \int_{\B} \chi^2 f^{p-2} |dJ^{g_0}|_{g_0}^2\, d\vol_{g_0}.
			\end{aligned}
		\end{equation}
		By Young's inequality, we obtain for any $\delta>0$:
		\begin{align*}
			 - \int_{\B} \chi^2 f^{p-2}\scal{\Sch^{g_0}}{\Hess_{g_0} J^{g_0}}_{g_0}\, d\vol_{g_0} 
			\leq &\  \delta  \int_{\B} |d\chi|^2_{g_0}\ f^p\, d\vol_{g_0}  + \delta \int_{\B} \chi^2 f^{p-2} |df|_{g_0}^2\, d\vol_{g_0}\\[2mm]
			& + C(\delta) \int_{\B} \chi^2 f^{p-2} |dJ^{g_0}|_{g_0}^2\, d\vol_{g_0}.
		\end{align*}
		Coming back to \eqref{eq:estdf1} and using Young inequality, we obtain 
		\begin{align*}
			\int_{\B}  \left|d\left(\chi f^{\frac{p}{2}} \right) \right|^2_{g_0} \, d\vol_{g_0} 
			\leq &\ C \int_{\B} |d\chi|^2 f^p +  \chi^2\left( f^{p-1} |B^{g_0}|_{g_0} + f^{p} |\Riem^{g_0}|_{g_0}\right)\, d\vol_{g_0} \\[2mm]
			&\ + C \int_{\B} \chi^2 f^{p-2} |dJ^{g_0}|^2_{g_0}\, d\vol_{g_0}.
		\end{align*}
		Thanks to \Cref{lm:Sobolev_gr}, we obtain
		\begin{align}
			\left( \int_{\B}  \chi^4 f^{2p}  \, d\vol_{g_0}\right)^{\frac{1}{2}} \leq &\ C\gamma_S\int_{\B} |d\chi|^2 f^p  d\vol_{g_0} \label{eq:est_f2p_1}\\[2mm]
			& +  C\gamma_S\int_{\B} \chi^2\left( f^{p-1} |B^{g_0}|_{g_0} + f^{p} |\Riem^{g_0}|_{g_0}\right)\, d\vol_{g_0} \label{eq:est_f2p_2}\\[2mm]
			&\ + C\gamma_S \int_{\B} \chi^2 f^{p-2} |dJ^{g_0}|^2_{g_0}\, d\vol_{g_0}. \label{eq:est_f2p_3}
		\end{align}
		We now estimate \eqref{eq:est_f2p_2} by Hölder inequality, \Cref{lm:integrability_Bach} and using the assumption $\supp(\chi)\subset B_{g_0}(x,2s)$: 
		\begin{align*}
			& \int_{\B} \chi^2\left( f^{p-1} |B^{g_0}|_{g_0} + f^{p} |\Riem^{g_0}|_{g_0}\right)\, d\vol_{g_0} \\
			\leq & \left( \int_{\B} \chi^{\frac{4p}{p-1}} f^{2p} \right)^{\frac{p-1}{2p}}\left( \int_{B_{g_0}(x,2s)} |B^{g_0}|_{g_0}^{\frac{2p}{p+1}}\, d\vol_{g_0} \right)^{\frac{p+1}{2p}}  + \left(\int_{\B} \chi^4 f^{2p} d\vol_{g_0} \right)^{\frac{1}{2}} \left(\int_{B_{g_0}(x,2s)} |\Riem^{g_0}|_{g_0}^2\, d\vol_{g_0} \right)^{\frac{1}{2}}.
		\end{align*}
		Thanks to the inequality $\|\Riem^{g_0}\|_{L^2(\B,g_0)} \leq \|\Riem^g\|_{L^2(\B,g)}\leq \ve$ by \eqref{asump:Smallness_Riem} and \Cref{rk:Ineq_Riem}, we obtain 
		\begin{align*}
			& \int_{\B} \chi^2\left( f^{p-1} |B^{g_0}|_{g_0} + f^{p} |\Riem^{g_0}|_{g_0}\right)\, d\vol_{g_0} \\
			\leq & \left( \int_{\B} \chi^{\frac{4p}{p-1}} f^{2p} \right)^{\frac{p-1}{2p}}\left( \int_{B_{g_0}(x,2s)} |B^{g_0}|_{g_0}^{\frac{2p}{p+1}}\, d\vol_{g_0} \right)^{\frac{p+1}{2p}}  + \ve \left(\int_{\B} \chi^4 f^{2p} d\vol_{g_0} \right)^{\frac{1}{2}} .
		\end{align*}
		We plug the above estimate in \eqref{eq:est_f2p_1}, \eqref{eq:est_f2p_2} and \eqref{eq:est_f2p_3}. Using Young inequality, the inequality $\chi^{\frac{4p}{p-1}}\leq \chi^4$ (since $\chi\leq 1$ and $\frac{p}{p-1}\geq 2$ by choice of $p$), we obtain 
		\begin{align}
			\left( \int_{\B}  \chi^4 f^{2p}  \, d\vol_{g_0}\right)^{\frac{1}{2}} \leq &\ C\int_{\B} |d\chi|^2 f^p  d\vol_{g_0} \label{eq:est_f2p_12}\\[2mm]
			& +  C\left( \int_{B_{g_0}(x,2s)}  |B^{g_0}|_{g_0}^{\frac{2p}{p+1}}\, d\vol_{g_0}\right)^{\frac{p+1}{p-1}} \label{eq:est_f2p_22}\\[2mm]
			&\ + C \int_{\B} \chi^2 f^{p-2} |dJ^{g_0}|^2_{g_0}\, d\vol_{g_0}. \label{eq:est_f2p_32}
		\end{align}
		We now use the assumption $p\leq 2$ and the inequality $|J^{g_0}|\leq f$ to obtain $|J^{g_0}|^{p-2}\geq f^{p-2}$. Using this into \eqref{eq:est_f2p_32}, we obtain 
		\begin{align}\label{eq:Int_J1}
			\int_{\B} \chi^2 f^{p-2} |dJ^{g_0}|^2_{g_0}\, d\vol_{g_0} \leq \frac{4}{p^2}\int_{\B} \chi^2 \left| d\left( (J^{g_0})^{p/2}\right) \right|^2_{g_0}\, d\vol_{g_0}.
		\end{align}
		Thanks to \Cref{lm:der_Jp}, we have
		\begin{align*}
			\int_{\B} \chi^2 \left| d\left( (J^{g_0})^{p/2}\right) \right|^2_{g_0}\, d\vol_{g_0} \leq C\int_{\B} |d\chi|^2_{g_0} (J^{g_0})^p\, d\vol_{g_0} \leq C\int_{\B} |d\chi|^2_{g_0} f^p\, d\vol_{g_0} .
		\end{align*}
		We plug this inequality into \eqref{eq:Int_J1} and \eqref{eq:est_f2p_32}:
		\begin{align*}
			\left( \int_{\B}  \chi^4 f^{2p}  \, d\vol_{g_0}\right)^{\frac{1}{2}} \leq  C\int_{\B} |d\chi|^2 f^p  d\vol_{g_0}  +  C\left( \int_{B_{g_0}(x,2s)}  |B^{g_0}|_{g_0}^{\frac{2p}{p+1}}\, d\vol_{g_0}\right)^{\frac{p+1}{p-1}}.
		\end{align*}
		Using the inequality $|d\chi|_{g_0}\leq Cs^{-1}$ and $p\leq 2$, we obtain 
		\begin{align*}
			 \left( \int_{\B}  \chi^4 f^{2p}  \, d\vol_{g_0}\right)^{\frac{1}{2}} 
			\leq &\ \frac{C}{s^2}\int_{B_{g_0}(x,2s)} f^p  d\vol_{g_0}  +  C\|B^{g_0}\|_{L^{\frac{2p}{p+1}}(B_{g_0}(x,2s),g_0)}^{\frac{2p}{p-1}} \\[2mm]
			\leq &\ C\frac{\vol_{g_0}(B_{g_0}(x,2s))^{\frac{2-p}{2}}}{s^2}\|f\|_{L^2(B_{g_0}(x,2s),g_0)}^p  +  C\|B^{g_0}\|_{L^{\frac{2p}{p+1}}(B_{g_0}(x,2s),g_0)}^{\frac{2p}{p-1}} .
		\end{align*}

	\end{proof}
	
	As a corollary, we obtain an upper bound for the volume of balls thanks to \cite{Carron20}. This is possible thanks to the above $\ve$-regularity.
	
	\begin{lemma}\label{lm:VolumeGrowth}
		There exist $\ve>0$ and $C>0$ depending on $p$, the operation $\star$ in \eqref{eq:lap_Sch} and the constants $\gamma_S$ and $\gamma_L$ such that the following holds. Assume that $\|\Riem^{g}\|_{L^2(\B,g)} \leq \ve$.
		Then, for each geodesic ball $B_{g_0}(x,s)\subset \B\setminus \{0\}$, we have
		\begin{align*}
			\frac{\vol_{g_0}(B_{g_0}(x,s))}{s^4} \leq C.
		\end{align*}
	\end{lemma}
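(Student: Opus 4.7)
The strategy is to combine the $\ve$-regularity estimate from \Cref{pr:eps_regp} with a volume comparison principle under integral Ricci bounds, as developed by Carron \cite{Carron20}. The heuristic is that once the Ricci tensor is controlled in $L^{q}$ for some $q > n/2 = 2$, one can run a Bishop--Gromov-type argument to get Euclidean volume growth.

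First, I would apply \Cref{pr:eps_regp} on $B_{g_0}(x,s) \subset \B \setminus \{0\}$ and translate the $L^{2p}$-bound on $\Sch^{g_0}$ into an $L^{2p}$-bound on $\Ric^{g_0}$ via $\Ric^{g_0} = 2\Sch^{g_0} + J^{g_0} g_0$. Since $p > 1$, we have $2p > n/2$, which places the Ricci tensor in the subcritical integrability class required by Carron's volume comparison. The $L^2$-norm $\|\Sch^{g_0}\|_{L^2(\B,g_0)}$ appearing on the right-hand side of the $\ve$-regularity estimate is controlled using the Chern--Gauss--Bonnet identity \eqref{eq:CGB_boundary}, which shows that passing from $g$ to $g_0$ does not increase $\|\Sch\|_{L^2}$, combined with the conformal invariance of $\|\Riem\|_{L^2}$ and the hypothesis $\|\Riem^g\|_{L^2(\B,g)} + \vol_g(\B) \leq \ve$.

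Second, I would invoke the volume comparison theorem from Carron's work: for $n=4$ and an integrability exponent $2p > 2$, if the normalized integral of $|\Ric^{g_0}_-|^{2p}$ over a ball is small, then the volume of that ball admits an upper bound by a constant times the Euclidean volume. Concretely, this is obtained by running a differential inequality on the function $V(s) = \vol_{g_0}(B_{g_0}(x,s))$ that bounds $V(s)$ in terms of the Euclidean comparison model plus an error controlled by the integral Ricci bound. The conclusion $V(s) \leq Cs^4$ then follows once the smallness hypothesis of Carron's theorem is verified.

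The main obstacle is the self-referential nature of the $\ve$-regularity estimate: its right-hand side contains $\vol_{g_0}(B_{g_0}(x,2s))^{1/(p+1)}$, which is precisely the quantity one is trying to estimate. To close this loop, I would proceed by a bootstrap: one first uses the doubling property of $g_0$ from \Cref{lm:gr_Doubling} together with the global bound $\vol_{g_0}(\B) \lesssim \vol_g(\B) \leq \ve$ to derive a weak \emph{a priori} bound on $V(s)$, then feeds it back into \Cref{pr:eps_regp} to ensure smallness of the integral Ricci term at every scale, and finally uses Carron's comparison to improve the polynomial growth to the sharp exponent $4$. The smallness hypothesis $\|\Riem^g\|_{L^2(\B,g)} \leq \ve$ enters both through the $\ve$-regularity threshold and by ensuring Carron's Kato-type smallness holds uniformly on all balls.
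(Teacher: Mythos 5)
Your overall blueprint — feed the $\ve$-regularity estimate from \Cref{pr:eps_regp} into Carron's integral-Ricci volume comparison \cite{Carron20} — is exactly the one the paper uses, and you correctly identify the central obstruction: the $\ve$-regularity estimate reintroduces the very quantity $\vol_{g_0}(B_{g_0}(x,2s))$ that one is trying to bound. However, your proposed method of closing that loop does not work as described, and this is where the proof actually lives.

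You suggest first deriving a ``weak a priori bound on $V(s)$'' from the doubling property of $g_0$ together with the global bound $\vol_{g_0}(\B)\lesssim\ve$, and then bootstrapping. But these two facts only give $V(s)\le\vol_{g_0}(\B)$, so the ratio $V(s)/s^4$ is bounded by $\vol_{g_0}(\B)/s^4$, which diverges as $s\to 0$; there is no useful a priori smallness to bootstrap from. The paper's resolution is of a different nature. After applying Carron's estimate and \Cref{pr:eps_regp}, and — crucially for $1<p<2$ — applying Young's inequality to convert the exponent $2-p$ on the volume term into exponent $1$, one lands on the self-improving inequality
\begin{align*}
	\frac{\vol_{g_0}(B_{g_0}(x,s))}{s^4} \leq C+ \eta\, \frac{\vol_{g_0}(B_{g_0}(x,2s))}{(2s)^4},
\end{align*}
with $\eta<1/2$ after shrinking $\ve$. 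This is then iterated \emph{outward} (doubling the radius each time) until the radius is comparable to the Euclidean distance $d_x$ to the boundary and singularity, giving a bound $2C + \eta\,V(\theta_x)/\theta_x^4$. To close, one picks a radius $t\le\theta_x$ nearly achieving $\sup_s V(s)/s^4$ and splits into cases: if $t<d_x/2$, a single further application of the inequality and absorption give the bound; if $t\ge d_x/2$, the doubling lemma \Cref{lm:gr_Doubling} is used \emph{at that single scale} to compare $V(t)$ with $V(t/2)$, and then the inequality and absorption again close the argument. So doubling enters only in the boundary case, not as a starting a priori bound, and the actual mechanism is iterate-to-the-boundary plus a supremum/absorption argument — neither of which appears in your sketch. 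Without this, the circularity you correctly flagged is not resolved.
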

	
	\begin{proof}
		By following the proof of Theorem A in \cite{Carron20} without the renormalization of the metric (in their notation, we have $\nu = 4p$), we obtain
		\begin{align}\label{eq:initial_vol}
			\theta(x,s)\coloneqq \frac{\vol_{g_0}(B_{g_0}(x,s))}{s^4} 
			\leq &\ C(p)\left(1+ s^{(4p-1)\frac{4p-4}{3}} \int_{B_{g_0}(x,s)} |\Ric^{g_0}|^{2p}\, d\vol_{g_0} \right).
		\end{align}
		We also have from \cite[page 462]{Carron20} that 
		\begin{align*}
			\vol_{g_0}(B_{g_0}(x,2s)) 
			\leq  C(p)\left( \vol_{g_0}(B_{g_0}(x,s)) + s^{4+(4p-1)\frac{4p-4}{3}} \int_{B_{g_0}(x,2s)} |\Ric^{g_0}|^{2p}\, d\vol_{g_0}\right). 
		\end{align*}
		Dividing by $(2s)^{-4}$, we obtain the following estimate:
		\begin{align}\label{eq:doubling}
			\theta(x,2s) \leq C\, \theta(x,s) + C\, s^{(4p-1)\frac{4p-4}{3}} \int_{B_{g_0}(x,2s)} |\Ric^{g_0}|^{2p}\, d\vol_{g_0}.
		\end{align}
		By \Cref{pr:eps_regp}, we have
		\begin{align*}
			\int_{B_{g_0}(x,s)} |\Ric^{g_0}|^{2p}\, d\vol_{g_0} 
			\leq & \ \frac{C}{s^4}\, \vol_{g_0}(B_{g_0}(x,2s))^{2-p}\, \|\Sch^{g_0}\|_{L^2(B_{g_0}(x,2s))}^{2p}   + \|B^{g_0}\|_{L^{\frac{2p}{p+1}}(B_{g_0}(x,2s),g_0)}^{\frac{4p}{p+1}} .
		\end{align*}
		Thus, we obtain
		\begin{align*}
			& s^{(4p-1)\frac{4p-4}{3}} \int_{B_{g_0}(x,s)} |\Ric^{g_0}|^{2p}\, d\vol_{g_0} \\[1mm]
			\leq & \ C\, s^{(4p-1)\frac{4p-4}{3}-4}\, \vol_{g_0}(B_{g_0}(x,2s))^{2-p}\, \|\Sch^{g_0}\|_{L^2(B_{g_0}(x,2s))}^{2p} + s^{(4p-1)\frac{4p-4}{3}}\, \|B^{g_0}\|_{L^{\frac{2p}{p+1}}(\B,g_0)}^{\frac{4p}{p+1}}   .
		\end{align*}
		Thanks to \ref{asump:Bachp} and \Cref{lm:integrability_Bach} we obtain
		\begin{align*}
			& s^{(4p-1)\frac{4p-4}{3}} \int_{B_{g_0}(x,s)} |\Ric^{g_0}|^{2p}\, d\vol_{g_0} \\[2mm]
			\leq &\ C +  C\, s^{(4p-1)\frac{4p-4}{3}-4}\, \vol_{g_0}(B_{g_0}(x,2s))^{2-p}\, \|\Sch^{g_0}\|_{L^2(B_{g_0}(x,2s))}^{2p} \\[2mm]
			\leq &\ C +  C\, s^{(4p-1)\frac{4p-4}{3}-4 +4(2-p)}\, \theta(x,2s)^{2-p}\, \|\Sch^{g_0}\|_{L^2(B_{g_0}(x,2s))}^{2p}\\[2mm]
			\leq & \ C +  C\, s^{(4p-1)\frac{4p-4}{3}+4(1-p)}\, \theta(x,2s)^{2-p}\, \|\Sch^{g_0}\|_{L^2(B_{g_0}(x,2s))}^{2p}.
		\end{align*}
		Using $p>1$, we obtain the following estimate on the exponent of $s$:
		\begin{align*}
			q\coloneqq (4p-1)\frac{4p-4}{3}+4(1-p) = 4(p-1)\left( \frac{4p-1}{3} - 1 \right) >0.
		\end{align*}
		Therefore, we obtain 
		\begin{align}\label{eq:Est2}
			s^{(4p-1)\frac{4p-4}{3}} \int_{B_{g_0}(x,s)} |\Ric^{g_0}|^{2p}\, d\vol_{g_0} 
			\leq  C+ C\, s^q\, \theta(x,2s)^{2-p}.
		\end{align}
		If $p=2$, then \Cref{lm:VolumeGrowth} is proved. If $1<p<2$, then $0<2-p<1$. Plugging \eqref{eq:Est2} into \eqref{eq:initial_vol}, we obtain a constant $C>0$ depending only on $p$ such that
		\begin{align}\label{eq:Est1}
			\theta(x,s) \leq C+ C\, s^q \, \theta(x,2s)^{2-p}.
		\end{align}
		Pluggin \eqref{eq:Est2} into \eqref{eq:doubling}, we obtain
		\begin{align}\label{eq:Est3}
			\theta(x,2s) \leq C\, \theta(x,s) + C\, s^q\, \theta(x,2s)^{2-p}.
		\end{align}
		Combining \eqref{eq:Est1} and \eqref{eq:Est3}, we obtain
		\begin{align*}
			\theta(x,2s) \leq C + C\, s^q\, \theta(x,2s)^{2-p}.
		\end{align*}
		Since $2-p<1$, we apply Young's inequality and obtain \Cref{lm:VolumeGrowth}.
	\end{proof}

	\subsection{Singularity removability}
	
	In this section, we prove that $g_0$ extends across the origin, up to reparametrization.
	
	\begin{lemma}\label{lm:singularity_remov}
		There exists a diffeomorphism $\Psi$ of $\B\setminus \{0\}$ such that $\Psi^*g_0\in W^{2,2p}(\B)$.
	\end{lemma}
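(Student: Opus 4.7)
The plan is to combine a blow-up analysis at the origin with the $\ve$-regularity of \Cref{pr:eps_regp} and the volume growth estimate of \Cref{lm:VolumeGrowth}. First I would fix a small enough radius $r_0>0$ so that on $B_{g_0}(0,r_0)$, the smallness hypotheses $\|\Riem^{g_0}\|_{L^2} + \vol_{g_0}<\ve$ are satisfied. From \Cref{pr:eps_regp} applied to balls $B_{g_0}(x,s)$ with $2s\leq \dist_{g_0}(x,0)$, together with the $L^{\frac{2p}{p+1}}$-bound on $B^{g_0}$ coming from \Cref{lm:integrability_Bach}, I obtain a uniform scale-invariant bound
\begin{align*}
	s^{2-\frac{2}{p}}\|\Sch^{g_0}\|_{L^{2p}(B_{g_0}(x,s))} \leq C,
\end{align*}
and consequently $s^4\|\Ric^{g_0}\|^{2p}_{L^{2p}(B_{g_0}(x,s))}\leq C$ after using \Cref{lm:VolumeGrowth} to handle the volume factor.

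Next I would perform a blow-up at the origin in the spirit of Anderson \cite{anderson1990,anderson2005} and Tian--Viaclovsky \cite{tian2005,tian2008}. Pick any sequence $\lambda_k\to 0$ and consider the rescaled metrics $g_k := \lambda_k^{-2}g_0$ on the annulus $A_k := B_{g_0}(0,2\lambda_k)\setminus B_{g_0}(0,\lambda_k/2)$, pulled back to a fixed annulus via the exponential or distance coordinates. The bound $\|\Riem^{g_k}\|_{L^2}$ is scale invariant and tends to $0$ on a shrinking neighborhood of the origin, while \Cref{lm:VolumeGrowth} and \Cref{lm:gr_Doubling} give noncollapsing volume $\vol_{g_k}(B_{g_k}(x,1))\geq v_0>0$ together with $\vol_{g_k}(B_{g_k}(x,1))\leq C$. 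Combined with the $L^{2p}$ bound on the Ricci tensor, this puts us in the framework of the Cheeger--Gromov compactness for Ricci-bounded metrics, so up to a subsequence $(A_k,g_k)$ converges in the $C^0$-topology (and in $W^{2,2p}$ in harmonic coordinates) to a Ricci-flat annulus in a complete Ricci-flat limit space $X_\infty$ with Euclidean volume growth and vanishing $L^2$-Riemann on any compact set. By the Bando--Kasue--Nakajima argument \cite{bando1989,streets2010}, $X_\infty$ is a flat cone $C(\s^3/\Gamma)$ for some finite subgroup $\Gamma\subset \SOo{4}$, and the convergence is smooth away from the tip.

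The identification of $\Gamma$ is the next step: since $\B\setminus\{0\}$ is simply connected, for $k$ large the annulus $A_k$ is simply connected as well, hence it must be diffeomorphic to the annulus in $\R^4$. Therefore $\Gamma$ is trivial, and the tangent cone at the origin is flat $\R^4$. Combining this with the smooth convergence away from the tip and a standard diagonal argument, I would construct a diffeomorphism $\Psi$ of $B_{g_0}(0,r_0)\setminus\{0\}$ onto a punctured Euclidean ball such that $\Psi^*g_0$ extends continuously to $0$ by the Euclidean metric $\xi$ (which corresponds to choosing normal or harmonic coordinates at the origin). Outside $B_{g_0}(0,r_0)$, the metric is already smooth, so one glues $\Psi$ to the identity with a diffeomorphism of $\B\setminus\{0\}$.

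Finally, to upgrade the regularity from $C^0$ to $W^{2,2p}$, I would work in harmonic coordinates around each point, including a neighborhood of the origin. In such coordinates, the metric satisfies the elliptic system
\begin{align*}
	\lap_{g_0} g_{ij} = -2(\Ric^{g_0})_{ij} + Q(g,\partial g),
\end{align*}
where $Q$ is quadratic in $\partial g$. Since $\Ric^{g_0}\in L^{2p}$ by the $\ve$-regularity and $\partial g\in L^\infty_{\loc}$ thanks to the $C^{0}$-convergence giving uniformly bounded $C^{0,\alpha}$-harmonic coordinates (Anderson \cite{anderson1990}), standard elliptic $L^p$-theory yields $g_{ij}\in W^{2,2p}$ in a neighborhood of the origin. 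The main obstacle is ensuring that harmonic coordinates near the origin can be constructed uniformly, which relies crucially on the non-collapsing from \Cref{lm:VolumeGrowth} together with the $L^2$-Riemann smallness: this is the pivotal estimate that makes the Anderson-type harmonic radius bound applicable at every scale down to $0$, and it is precisely what controls the passage from continuous extension to $W^{2,2p}$ regularity.
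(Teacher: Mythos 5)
Your proposal follows essentially the same blow-up strategy as the paper: rescale at the origin, use the $\ve$-regularity of \Cref{pr:eps_regp} together with the volume growth of \Cref{lm:VolumeGrowth} to extract a limit, show the limit is flat, and conclude $W^{2,2p}$ regularity in harmonic coordinates. The one genuinely different step is the identification of the flat limit. You invoke the Bando--Kasue--Nakajima-type classification: the tangent cone is $C(\s^3/\Gamma)$, and then $\Gamma$ must be trivial by simple connectedness. The paper instead, once $\Riem^{h_0}=0$ is known, runs a cleaner and more self-contained developing-map argument: analytically continue the local isometries $\phi_x:(U_x,h_0)\to(\R^4,\xi)$ along paths, using the simple connectedness of $\R^4\setminus\{0\}$ (a dimension-$\geq 3$ phenomenon) to obtain a global isometry $\Phi:(\R^4\setminus\{0\},h_0)\to(\R^4\setminus\{*\},\xi)$, with no cone-structure classification needed. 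Both routes are valid; the paper's avoids appealing to the orbifold classification machinery.

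Two smaller points you should correct. First, you attribute noncollapsing ($\vol_{g_k}(B_{g_k}(x,1))\geq v_0>0$) to \Cref{lm:VolumeGrowth} and \Cref{lm:gr_Doubling}, but \Cref{lm:VolumeGrowth} gives only an \emph{upper} bound on $\vol_{g_0}(B_{g_0}(x,s))/s^4$, and a doubling bound alone does not prevent collapse. The lower volume bound in fact comes from the scale-invariant Sobolev inequality of \Cref{lm:Sobolev_gr} (the paper alludes to this in a commented-out remark, and it is what licenses the application of Yang's harmonic-coordinate theorem). Second, you call the limit ``Ricci-flat,'' but what you actually obtain -- and what is needed -- is that the full Riemann tensor vanishes, since $\|\Riem^{h_s}\|_{L^2(B_{h_s}(x,t),h_s)}=\|\Riem^{g_0}\|_{L^2(B_{g_0}(sx,st),g_0)}\to 0$ by scale invariance of the $L^2$-Riemann energy. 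Ricci-flatness alone would not give flatness of the limit without further work (the Weyl tensor could a priori survive), so be precise that it is $L^2$-smallness of the \emph{Riemann} tensor at vanishing scale that does the job.
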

	
	To prove \Cref{lm:singularity_remov}, we proceed to a blow-up around the origin and prove that the strong limit in the $W^{2,2p}_{\loc}$-topology is a flat space. Thus, we can extend our metric before the origin. For each $s\in (0,\frac{1}{8})$, we consider the metric $h_s \coloneq s^{-2} g_0(s\cdot)$ on $\B_{1/s}\setminus \{0\}$. The metric $h_s$ still satisfies the following property.

	\begin{lemma}\label{lm:BlowUp_schouten}
		There exist $\ve>0$ and $C>0$ depending on $p$, and the constants appearing in the assumptions of \Cref{th:Huber4D} such that the following holds. Assume that $\|\Riem^g\|_{L^2(\B,g)}\leq \ve$. Then, for each geodesic ball $B_{h_s}(x,1)\subset \B_{1/s}\setminus \{0\}$, it holds
		\begin{align*}
			\|\Sch^{h_s}\|_{L^{2p}(B_{h_s}(x,1))} \xrightarrow[s\to 0]{} 0.
		\end{align*}
	\end{lemma}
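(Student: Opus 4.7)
The plan is to reduce the statement to the $\ve$-regularity of Theorem \ref{pr:eps_regp} by a scaling argument. Writing $\phi_s(y)=sy$ and $h_s=s^{-2}\phi_s^{*}g_0$, and using that a constant conformal rescaling leaves the Schouten tensor invariant as a $(0,2)$-tensor, a direct computation yields
\begin{equation*}
|\Sch^{h_s}|_{h_s}(y)=s^2\,|\Sch^{g_0}|_{g_0}(sy),\qquad B_{h_s}(x,r)=\phi_s^{-1}\bigl(B_{g_0}(sx,sr)\bigr),
\end{equation*}
and $\vol_{h_s}(B_{h_s}(x,r))=s^{-4}\vol_{g_0}(B_{g_0}(sx,sr))$. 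After the change of variable $z=sy$, one gets the key scaling identity
\begin{equation*}
\|\Sch^{h_s}\|_{L^{2p}(B_{h_s}(x,r))}=s^{2-2/p}\,\|\Sch^{g_0}\|_{L^{2p}(B_{g_0}(sx,sr))}.
\end{equation*}

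Under the hypothesis $B_{h_s}(x,1)\subset \B_{1/s}\setminus\{0\}$, i.e.\ $B_{g_0}(sx,s)\subset \B\setminus\{0\}$, I would apply Theorem \ref{pr:eps_regp} to $g_0$ with radii $s/2$ and $s$ (playing the roles of $s$ and $2s$ there). Combining the volume bound $\vol_{g_0}(B_{g_0}(sx,s))\leq C s^4$ from Lemma \ref{lm:VolumeGrowth} with the uniform control of $\|J^{g_0}\|_{L^2(\B,g_0)}$, $\|B^{g_0}\|_{L^{2p/(p+1)}(\B,g_0)}$ and $\vol_{g_0}(\B)$ (the latter two from Lemma \ref{lm:integrability_Bach} and the energy bound of Section \ref{sec:energy_functional}), one obtains
\begin{equation*}
\|\Sch^{g_0}\|_{L^{2p}(B_{g_0}(sx,s/2))}\leq C\,s^{2/p-2}\,\|\Sch^{g_0}\|_{L^2(B_{g_0}(sx,s))}+C\,s^{4/(p+1)}.
\end{equation*}
Multiplying by $s^{2-2/p}$ and invoking the scaling identity yields
\begin{equation*}
\|\Sch^{h_s}\|_{L^{2p}(B_{h_s}(x,1/2))}\leq C\,\|\Sch^{g_0}\|_{L^2(B_{g_0}(sx,s))}+C\,s^{2-2/p+4/(p+1)}.
\end{equation*}

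It then suffices to show that both terms tend to $0$ uniformly in $x$ and to upgrade the estimate to the full unit ball. The exponent $2-2/p+4/(p+1)=\frac{2(p^2+2p-1)}{p(p+1)}$ is strictly positive for every $p\geq 1$ (equal to $2$ at $p=1$ and $7/3$ at $p=2$), so the second term is $o(1)$. For the first term, $|\Sch^{g_0}|^2\in L^1(\B,d\vol_{g_0})$ since $\Riem^{g_0}\in L^2$; by absolute continuity of the Lebesgue integral, $\|\Sch^{g_0}\|_{L^2(A)}\to 0$ whenever $\vol_{g_0}(A)\to 0$, and the bound $\vol_{g_0}(B_{g_0}(sx,s))\leq Cs^4$ makes this convergence uniform in $x$. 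Finally, to pass from $B_{h_s}(x,1/2)$ to $B_{h_s}(x,1)$, I would cover $B_{h_s}(x,1)$ by a uniformly bounded number of $h_s$-balls of radius $1/2$ (the cardinality being controlled by the doubling property of Lemma \ref{lm:gr_Doubling}) and sum the estimates.

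The main technical obstacle lies in the compatibility of radii: the $\ve$-regularity of Theorem \ref{pr:eps_regp} estimates $\|\Sch^{g_0}\|_{L^{2p}(B_{g_0}(x,s))}$ in terms of the doubled ball $B_{g_0}(x,2s)$, which forces us to apply the estimate with shrunken inner radius $s/2$ and recover the full ball by covering. Beyond this, the argument is a matter of tracking scaling exponents and using that the relevant global $g_0$-norms are uniformly controlled in $s$, which has already been established in the preceding lemmas.
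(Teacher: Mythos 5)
Your proof follows essentially the same route as the paper: write the scaling relations, apply the $\ve$-regularity of \Cref{pr:eps_regp} to a shrinking ball $B_{g_0}(sx,\cdot)$, invoke the volume bound of \Cref{lm:VolumeGrowth} together with the uniform control of $\|J^{g_0}\|_{L^2}$ and $\|B^{g_0}\|_{L^{2p/(p+1)}}$, and observe that the residual powers of $s$ are strictly positive. You also make explicit something the paper leaves tacit: that $\|\Sch^{g_0}\|_{L^2(B_{g_0}(sx,s))}\to 0$ follows from $\Sch^{g_0}\in L^2(\B,g_0)$ and the vanishing volume of the balls, via absolute continuity of the integral. The main divergence from the paper is in the choice of scale: the paper applies the $\ve$-regularity at the $x$-dependent radius $\theta=\tfrac14\dist_{\mathrm{eucl}}(x,0)$, which produces an estimate on $B_{h_s}(x,\theta)$ with a $\theta$-dependent constant, while you use the fixed radii $s/2$ and $s$, yielding a uniform estimate on $B_{h_s}(x,1/2)$. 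Your choice is cleaner and makes the hypothesis $B_{g_0}(sx,s)\subset\B\setminus\{0\}$ exactly what is needed.

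The one genuine gap is your final covering step. To apply the $\ve$-regularity at a cover center $y_i$ with radii $1/2$ and $1$ you need $B_{h_s}(y_i,1)\subset\B_{1/s}\setminus\{0\}$, but the only inclusion given is $B_{h_s}(x,1)\subset\B_{1/s}\setminus\{0\}$; if $y_i$ lies near $\partial B_{h_s}(x,1)$ the doubled ball $B_{h_s}(y_i,1)$ may escape. So as written the covering only recovers a sub-ball such as $B_{h_s}(x,1/2)$, not the full $B_{h_s}(x,1)$. In fairness, the paper's own proof also terminates with an estimate on $B_{h_s}(x,\theta)$ rather than on the full unit ball, and this imprecision is immaterial for the subsequent use of the lemma (local $W^{2,2p}$ harmonic-coordinate estimates on balls of fixed but arbitrary radius). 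A clean fix is either to weaken the lemma's conclusion to a fixed sub-ball such as $B_{h_s}(x,1/2)$, or to assume $B_{h_s}(x,2)\subset\B_{1/s}\setminus\{0\}$.
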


	\begin{proof}
		We have the following relations:
		\begin{align*}
			& s^2 \Riem^{h_s} = \Riem^{g_0}, \quad s^4 d\vol_{h_s} = d\vol_{g_0},\quad s^{-2} B^{h_s} = B^{g_0},\\[2mm]
			& \Sch^{h_s} =\Sch^{g_0}, \quad s^{-2} J^{h_s} = J^{g_0}, \quad B_{h_s}(x,1) = B_{g_0}(x,s),\\[2mm]
			& |\Sch^{g_0}|^2_{g_0} = s^{-4}|\Sch^{h_s}|^2_{h_s},\quad |B^{g_0}|_{g_0} = s^{-4}|B^{h_s}|_{h_s}.
		\end{align*}
		Fix a ball $B_{h_s}(x,1)\subset \B_{1/s}\setminus \{0\}$ and let $\theta \coloneq \frac{1}{4}\dist_{\mathrm{eucl}}(x,\{0\})$.	By \Cref{pr:eps_regp}, we have the following estimate:
		\begin{align*}
			 \|\Sch^{g_0}\|_{L^{2p}(B_{g_0}(sx,s\theta ))} 
			\leq & \ \frac{C}{s^{\frac{2}{p}}}\vol_{g_0}(B_{g_0}(sx,2s\theta))^{\frac{1}{p}-\frac{1}{2}}\|\Sch^{g_0}\|_{L^2(B_{g_0}(sx,2s\theta))} 
			+C \|B^{g_0}\|_{L^{\frac{2p}{p+1}}(\B,g_0)}^{\frac{2}{p+1}}  .
		\end{align*}
		We multiply by $s^{2-\frac{2}{p}}$ in order to obtain a scale-invariant formulation of the left-hand side:
		\begin{align*}
			& \left(s^{4p-4}\int_{B_{g_0}(sx,s\theta)} |\Sch^{g_0}|^{2p}\, d\vol_{g_0} \right)^{\frac{1}{2p}} \\[2mm]
			\leq &\ C\left( \frac{ \vol_{g_0}(B_{g_0}(sx,2s\theta)) }{s^4} \right) ^{\frac{1}{p}-\frac{1}{2}} \|\Sch^{g_0}\|_{L^2(B_{g_0}(sx,2s\theta))} + Cs^{2-\frac{2}{p}} 
			\|B^{g_0}\|_{L^{\frac{2p}{p+1}}(\B,g_0)}^{\frac{2}{p+1}}    .
		\end{align*}
		By \Cref{lm:VolumeGrowth}, we obtain
		\begin{align*}
			 \left(s^{4p-4}\int_{B_{g_0}(sx,s\theta)} |\Sch^{g_0}|^{2p}\, d\vol_{g_0} \right)^{\frac{1}{2p}} 
			\leq &\ C \|\Sch^{g_0}\|_{L^2(B_{g_0}(sx,2s\theta))}  + Cs^{2-\frac{2}{p}} 
			\|B^{g_0}\|_{L^{\frac{2p}{p+1}}(\B,g_0)}^{\frac{2}{p+1}}  .
		\end{align*}
		Here, the constant $C$ depends on $\theta$, by \Cref{lm:VolumeGrowth}. Since $p>1$, it holds $2-\frac{2}{p}>0$ and we obtain
		\begin{align*}
			\|\Sch^{h_s}\|_{L^{2p}(B_{h_s}(x,1))} = \left(s^{4p-4}\int_{B_{g_0}(sx,s\theta)} |\Sch^{g_0}|^{2p}\, d\vol_{g_0} \right)^{\frac{1}{2p}} \xrightarrow[s\to 0]{} 0.
		\end{align*}
	\end{proof}

	Thanks to \cite[Theorem 7.1]{yang1992II} (see also \cite[Theorem 4.2]{marini2024} and \cite{gallot1988,hiroshima1995,petersen2001,yang1992I}) together with \Cref{lm:BlowUp_schouten}, for any ball $B_{h_s}(x,t)$, there exist harmonic coordinates $(x^1,\ldots,x^4)$ such that the components $(h_s)_{ij} \in W^{2,2p}(B_{\xi}(x,t/2))$ satisfy the following properties
	\begin{align*}
		& \|x^i\|_{L^{\infty}(B_{\xi}(x,t/2))} \leq 2t,\\
		& \|\g x^i\otimes \g x^j - \xi^{ij} \|_{L^{\infty}(B_{\xi}(x,t/2))}\leq C\left(t^{4p-4} \int_{B_{h_s}(x,t)} |\Sch^{h_s}|^{2p}_{h_s}\, d\vol_{h_s} \right)^{\frac{1}{2p}},\\
		& \|\g^2 (h_s)_{ij} \|_{L^{2p}(B_{\xi}(x,t/2))} \leq Ct^{-2\frac{p-1}{p}} \|\Sch^{h_s}\|_{L^{2p}(B_{h_s}(x,t))}.
	\end{align*}
	Up to a subsequence, the metrics $(h_s)_{s>0}$ converge as $s\to 0$ in the weak $W^{2,2p}_{\loc}$-topology to some metric $h_0$ on $\R^4\setminus \{0\}$. In particular, we deduce that $(\Riem^{h_s})_{s>0}$ converges to $\Riem^{h_0}$ weakly in $L^{2p}_{\loc}$. From the following relation, we obtain that $\Riem^{h_0}=0$:
	\begin{align*}
		\|\Riem^{h_s}\|_{L^2\left(B_{h_s}(x,t),h_s\right)} = \|\Riem^{g_0}\|_{L^2\left(B_{g_0}(sx,st),g_0 \right)} \xrightarrow[s\to 0]{} 0.
	\end{align*}
	In the spirit of the singularity removability for Ricci-flat metrics, see for instance \cite{gao1990,smith1992,streets2010}, we have the following result.
	
	\begin{claim}
		There exists a diffeomorphism $\Phi\colon \R^4\setminus \{*\} \to \R^4\setminus \{0\}$ such that $\Phi^*h_0 = \xi$.
	\end{claim}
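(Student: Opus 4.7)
The strategy is to upgrade the regularity of $h_0$, construct a developing map, and then identify the image via volume comparison.

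\textit{Step 1 (smoothness).} Since $\Riem^{h_0}=0$ implies $\Ric^{h_0}=0$, harmonic coordinates convert the Ricci-flat equation into a semilinear elliptic system of the form
\begin{align*}
-\tfrac{1}{2}h_0^{ab}\dr_a\dr_b (h_0)_{ij} + Q_{ij}(h_0,\dr h_0) = 0,
\end{align*}
where $Q$ is quadratic in $\dr h_0$ with coefficients depending smoothly on $h_0$. Because $h_0\in W^{2,2p}$ with $p>1$ embeds into $C^{0,\alpha}\cap W^{1,4}_{\loc}$ in dimension $4$, a standard elliptic bootstrap in harmonic charts upgrades $h_0$ to $C^\infty$. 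Thus $(\R^4\setminus\{0\},h_0)$ is a smooth flat Riemannian manifold.

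\textit{Step 2 (developing map).} The Levi-Civita connection of $h_0$ is flat and the manifold $\R^4\setminus\{0\}$ is simply connected (dimension $\geq 3$), so the classical developing map construction applied to the $(\mathrm{Iso}(\R^4),\R^4)$-structure produces a globally-defined local isometry
\begin{align*}
\tilde\Phi:(\R^4\setminus\{0\},h_0)\longrightarrow(\R^4,\xi).
\end{align*}

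\textit{Step 3 (global identification).} This is the main difficulty. From \Cref{lm:VolumeGrowth}, one has $\vol_{h_0}(B_{h_0}(x,r))\leq Cr^4$. The Sobolev inequality of \Cref{lm:Sobolev_gr}, which is scale-invariant under the blowup rescaling and therefore passes to the limit $h_0$, yields via the Federer--Fleming argument the matching lower bound $\vol_{h_0}(B_{h_0}(x,r))\geq cr^4$. Combined with the local Euclidean model, these two-sided bounds force $\tilde\Phi$ to be proper when restricted to the complement of any neighborhood of $0$, hence a covering map onto an open subset $U\subset\R^4$; simple-connectedness of the target then makes $\tilde\Phi$ a diffeomorphism onto $U$. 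Since $\Gamma=\{1\}$ (by simple-connectedness of the initial ball $\B$), the link of the deleted point is topologically $\s^3$, so the metric completion of $(\R^4\setminus\{0\},h_0)$ adds exactly one point; under $\tilde\Phi$ this corresponds to a single missing point $*\in\R^4$, so $U=\R^4\setminus\{*\}$. Setting $\Phi:=\tilde\Phi^{-1}$ yields the required diffeomorphism with $\Phi^*h_0=\xi$.

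The main obstacle is the properness of $\tilde\Phi$ away from $\{0\}$: one must rule out the possibility that geodesics develop into bounded arcs accumulating at points of $\R^4$ other than $*$, and one must preclude $\tilde\Phi$ from folding onto itself. Both require the two-sided Euclidean volume control, the upper bound preventing multi-sheeted images and the lower bound preventing the image from being a proper subset with missing bulk. Once these are in hand, the remainder is bookkeeping via the Euclidean geometry of $(\R^4,\xi)$.
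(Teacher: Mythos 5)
Your Steps 1 and 2 are sound and match the paper's underlying ideas: smoothness of $h_0$ follows from elliptic regularity for Ricci-flat metrics in harmonic coordinates, and the developing map for a flat simply connected Riemannian manifold (here $\R^4\setminus\{0\}$, simply connected since $n\geq 3$) is exactly what the paper constructs via analytic continuation of local isometries.

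The genuine gap is in Step 3. You flag properness of $\tilde\Phi$ away from the origin as "the main obstacle," and then you assert, without proof, that the two-sided Euclidean volume bounds deliver it ("the upper bound preventing multi-sheeted images and the lower bound preventing the image from being a proper subset with missing bulk"). This is a non-trivial claim that does not follow immediately from the volume bounds. Recall that a local isometry defined on an \emph{incomplete} manifold need not be a covering map onto its image; e.g. the exponential $(0,2\pi)\to \s^1$ is a local isometry that is not a covering. Since $(\R^4\setminus\{0\},h_0)$ is incomplete near $0$, one cannot invoke the standard Cartan--Ambrose--Hicks/covering theorem directly. Your volume-comparison argument would need, concretely, (i) a proof that $\tilde\Phi$ restricted to each annulus is proper (so that the image is open and the restriction is a covering onto it), (ii) a verification that the volume upper bound for $h_0$-balls actually transfers into a bound on sheet number for the covering -- which is subtle because $\tilde\Phi(B_{h_0}(x,r))$ is not in general a Euclidean ball -- and (iii) an independent argument that the image is simply connected; as written, "simple-connectedness of the target then makes $\tilde\Phi$ a diffeomorphism onto $U$" followed by "so $U=\R^4\setminus\{*\}$" is circular, since establishing $U=\R^4\setminus\{*\}$ is precisely what is being used to know $U$ is simply connected.

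The paper takes a different route in Step 3 which avoids the volume argument entirely: it exhausts $\R^4\setminus\{0\}$ by simply connected compact sets $X_n$, uses the (stronger, more local) claim that $\Phi$ restricted to each $X_n$ is a Riemannian covering onto its image, and concludes injectivity on each $X_n$ because the $X_n$ are simply connected; exhausting gives injectivity globally. This sidesteps both the lower volume bound (which the paper does not need) and the properness issue by compactness. If you want to keep your volume-based strategy, you must actually carry out the properness and sheet-counting argument; as it stands, the proof has a hole precisely at the step you yourself label as the main difficulty.
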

	\begin{remark}
		Here, we strongly use the fact that we are in dimension greater than 2, so that $\R^4\setminus \{0\}$ is simply connected.
	\end{remark}
	
	\begin{proof}
		Since $\Riem^{h_0}=0$, for every $x\in \R^4\setminus\{0\}$, there exists an open set $U_x\subset \R^4\setminus\{0\}$ containing $x$, another open set $V_x\subset \R^4$ and an isometry $\phi_x \colon U_x\to V_x$. If $y\in \R^4\setminus\{0,x\}$ is such that $U_x\cap U_y\neq \emptyset$, then $\phi_x\circ\phi_y^{-1}\colon \phi_y(U_y\cap U_x) \to \phi_x(U_y\cap U_x)$ is an isometry of $\R^4$. Thus, $\phi_x\circ\phi_y^{-1}$ is the composition of some rotations and translations. Up to changing $\phi_y$ by an isometry of $\R^4$, we can assume that $\phi_x\circ\phi_y^{-1} = \id$. \\
		
		Therefore, given any $x\in \R^4\setminus\{0\}$ and any path $\gamma\colon [0,1]\to \R^4\setminus\{0\}$ such that $\gamma(0)=x$, the map $\phi_x$ can be analytically continued along $\gamma$, in the sense of \cite[Corollary 12.3]{lee2018}. Given $y\in \R^4\setminus\{0,x\}$, we define $\Phi(y)$ to be the value at $y$ of any analytic continuation of $\phi_x$ from $x$ to $y$. This is well defined, since $X$ is simply connected, thanks to \cite[Theorem 12.2]{lee2018}. Then, the map $\Phi \colon (\R^4\setminus\{0\},h_0)\to (\R^4,\xi)$ is a local isometry. \\
		
		Consider now $(X_n)_{n\in\N}$ an exhaustion of $\R^4\setminus\{0\}$ by simply connected compact sets, \textit{i.e.} each $X_n$ is compact in $\R^4\setminus\{0\}$, if $n\leq m$ then $X_n \subset X_m$ and $\R^4\setminus\{0\} = \bigcup_{n\in\N} X_n$. Since each $X_n$ is complete, we obtain that $\Phi :X_n\to \Phi(X_n)$ is a Riemannian covering map thanks to \cite[Theorem 6.3]{lee2018}. Applying \cite[Corollary A.59]{lee2018}, we deduce that each $\Phi \colon X_n\to \Phi(X_n)$ is a diffeomorphism. Thus $\Phi\colon X_n\to \Phi(X_n)$ is a global isometry (a local isometry preserves the arc-length, so that a bijective local isometry is a global isometry). This is valid for any $n$. Hence $\Phi : \R^4\setminus\{0\}\to \Phi(\R^4\setminus\{0\})$ is a bijective global isometry. Thus $\Phi(\R^4\setminus\{0\})$ has the same topology as $\R^4\setminus\{0\}$ and we obtain that the map $\Phi\colon (\R^4\setminus \{0\},h_0)\to (\R^4\setminus \{*\},\xi)$ is a global isometry.
	\end{proof}
	
	Consequently, $(\Phi^*h_s)_{s>0}$ has only one adherence value as $s\to 0$, meaning that $\Phi^* h_s\xrightarrow[s\to 0]{} \xi$ in the weak $W^{2,2p}_{\loc}$-topology. In other words there exists a diffeomorphism $\Psi\colon\B\setminus \{0\}\to \B\setminus \{0\}$ such that $\Psi^* g_0$ converges to $\xi$ at the origin in the weak $W^{2,2p}$-topology. Hence, $\Psi^* g_0\in W^{2,2p}(\B)\hookrightarrow C^0(\B)$. This proves \Cref{lm:singularity_remov}.

	\section{Asymptotically flat and asymptotically locally Euclidean manifolds }\label{sec:asympt_flat}
	
	In this section we are interested in  non-compact Riemannian manifolds with a single end, which are asymptotically locally Euclidean. By this term, we mean the following definition:
	
	\begin{defi}
		Let $k\in\N$ and $\alpha\in(0,1)$.	A Riemannian manifold $(M, g)$, non-compact with a single end, is said to be $C^{k,\alpha}$-asymptotically locally Euclidean (ALE) of order $\tau > 0$ if there exist a compact subset $K \subset M$, $R > 0$ and $\Gamma$ is a finite group of $SO(n)$ acting freely and a $C^{k+1,\alpha}$-diffeomorphism (coordinate chart at infinity)
		\[
		\Phi\colon M \setminus K \to (\mathbb{R}^n \setminus B(0, R)) / \Gamma
		\]
		such that the components of the metric in this chart, $g_{ij}$, satisfy:
		\begin{align*}
			\begin{cases}
				g_{ij}(z) = \delta_{ij} + O(|z|^{-\tau}),\\[2mm]
				\partial^m g_{ij}(z) = O(|z|^{-\tau-m}), \quad \forall m \in \{1, \dots, k\},\\[2mm]
				[\partial^k g_{ij}]_\alpha(z) = O(|z|^{-\tau-k-\alpha}),
			\end{cases}
		\end{align*}
		where
		\[
		[\partial^k g_{ij}]_\alpha(z) = \sup_{0<d_g(z,z')\leq 1} \frac{|\partial^k g_{ij}(z) - \partial^k g_{ij}(z')|}{|z - z'|^\alpha}.
		\]
		For brevity, we write: $g - \delta \in C^{k, \alpha}_{-\tau}(M)$. If $\Gamma = \{1\}$ then $(M,g)$ is said to be   $C^{k,\alpha}$-asymptotically flat of order $\tau$.
	\end{defi}
	
	In \Cref{th:Herzlich}, Herzlich proved that a necessary condition for an ALE manifold $(M,g)$ to have a $C^{0,\delta}$-compactification is the existence of some $1> \alpha \geq \eta>0$ such that $W^g\in C^{0,\alpha}_{-2-\eta}(M)$ and $\Cot^g\in C^{0,\alpha}_{-3-\eta}(M)$. Differentiating once more, a similar assumption concerning the Bach tensor would be $B^g\in C^{0,\alpha}_{-4-\eta}(M)$, which implies $B^g\in L^p$ for some $p>1$.
	In our case, we consider a metric $g$ such that $g-\delta \in C^{1}_\tau(M)$ for some $\tau>0$. Then we make the naive compactification, considering $h\coloneq  \vert x\vert^4 \iota^*g$  where $\iota\colon z\mapsto \frac{z}{|z|^2}$ is the inversion. Hence, we have the following asymptotic expansion:
	$$
	h_{ij} \ust{r\to 0}{=}  \delta_{ij} + O\left(r^\tau\right), \qquad \det h \ust{r\to 0}{=} 1+o(1)\qquad  \text{ and } \qquad \Gamma_h  \ust{r\to 0}{=}O\left(r^\tau\right),
	$$
	where $\Gamma_h$ are the Christoffel symbol of $h$. Thanks to those estimates, we can easily check that  the assumptions 1 and  3 to 6 of \Cref{th:global_Huber_4D} are satisfied. \\
	
	We now check the assumption $\Riem^{h}\in L^2(\B,h)$. First, since the Weyl tensor is conformally invariant, we have 
	\begin{align*}
		| W^{h} |_{h}^2\, \sqrt{\det h}\in L^1.
	\end{align*}
	Let $\tilde h \coloneq \iota^*g$, then the Schouten tensor changes according to the formula \eqref{eq:conf_change_Schouten}:
	\begin{align*}
		|\Sch^h|_{h} & \leq |\Sch^{\tilde{h}}|_{h} + \left| \Hess_{\tilde{h}}(2\log r) - d(2\log r)\otimes d(2\log r)+ \frac{|d(2\log r)|^2_{\tilde{h}}}{2} {\tilde{h}} \right|_{h} \\
		& \leq r^{-4}|\Sch^{\tilde{h}}|_{\tilde{h}} +  \left| \Hess_{\tilde{h}}(2\log r) - d(2\log r)\otimes d(2\log r)+ \frac{|d(2\log r)|^2_{\tilde{h}}}{2} \tilde{h} \right|_{h}.
	\end{align*}
	The Hessian changes according to the following formula, see for instance \cite[Equation (2)]{curry2018}: if $\tilde{h}=e^{2u} h$ and $f\in C^{\infty}$, then 
	\begin{align*}
		(\Hess_{\tilde{h}} f)_{ij} & = \g^{\tilde{h}}_i \g^{\tilde{h}}_j f \\[2mm]
		& = \g^{h}_i \g^{\tilde{h}}_j f - (\g^h_i u) (\g^{\tilde{h}}_j f) - (\g^h_j u) (\g^{\tilde{h}}_i f) + \scal{ \g^h u}{ \g^{\tilde{h}} f}_h h_{ij} \\[2mm]
		& = (\Hess_h f)_{ij} -2(\g^h u\otimes \g^h f)_{ij} + \scal{du}{df}_h h_{ij} .
	\end{align*}	
	We apply this formula to the choice $\tilde{h}=e^{2u} h$ with $u=-2\log r$ and $f=2\log r$:
	\begin{align*}
		& \Hess_ {\tilde{h}}(2\log r) - d(2\log r)\otimes d(2\log r)+ \frac{|d(2\log r)|^2_{ \tilde{h}}}{2}  \tilde{h} \\
		=&\ \Hess_{h}(2\log r) +d(2\log r)\otimes d(2\log r) - \frac{|d(2\log r)|^2_{h}}{2} h.
	\end{align*}
	The first order of the above expression in the expansion as $r\to 0$ is obtained by changing $h$ with the Euclidean metric:
	\begin{align*}
		-\frac{2}{r^2}dr\otimes dr +\frac{4}{r^2}dr\otimes dr - \frac{2}{r^2} (dr^2 +r^2d\theta^2) = -2\, d\theta^2.
	\end{align*}
	Hence, we obtain
	\begin{align*}
		\int_{\B} |\Sch^{h}|^2_{h}\, d\vol_{h} \leq C\int_{\B} |\Sch^{ \tilde{h}}|^2_{ \tilde{h}}\, d\vol_{ \tilde{h}} + C.
	\end{align*}
	Finally the Bach tensor satisfies $\vert B^h\vert_h = \vert x\vert ^{-8} \vert B^{\tilde{h}}\vert_{\tilde{h}}$, hence we easily check that  $\vert B^h\vert_h \in L^{p'}$ for $1<p'<\frac{3p}{1+2p}$.
	Thus, the metric $h$ can be conformally extended in the $W^{2,2p'}$-topology across the origin and \Cref{cor:AF} is proved.
	
	\section{Isolated singularities of $W^{4,\frac{4}{3}+\ve}$-immersions}\label{sec:immersion}
	
	In this section, we apply \Cref{th:Huber4D} to immersions whose second fundamental form is twice differentiable.
	
	\begin{theorem}
		Let $\ve\in(0,\frac{1}{10})$ and $d\geq 5$ be an integer. There exists $\delta>0$ depending only on $d$ and $\ve$ such that the following holds. Consider $\Phi\colon \B^4\setminus \{0\}\to\R^d$ be a smooth immersion such that its second fundamental form $\II_{\Phi}$ satisfies
		\begin{align*}
			\int_{\B} |\g^2\II_{\Phi}|^{\frac{4}{3}+\ve}_{g_{\Phi}} + |\g\II|^{2+\ve}_{g_{\Phi}} + |\II|^{4+\ve}_{g_{\Phi}}+1\, d\vol_{g_{\Phi}} <+\infty.
		\end{align*}
		Assume that
		\begin{align*}
			\int_{\B}  |\II|^4_{g_{\Phi}}\, d\vol_{g_{\Phi}} <\delta.
		\end{align*}
		Then, there exist a diffeomorphism $f\in \Diff(\B^4\setminus \{0\})$, a conformal factor $u\in C^{\infty}(\B^4\setminus \{0\})$, an number $p>2$ and a metric $h$ on $\B^4$ of class $W^{2,p}(\B^4)$ such that $g_{\Phi\circ f} = e^{2u} h$.
	\end{theorem}
	
	\begin{proof}
		By Gauss--Codazzi equations, we have
		\begin{align*}
			\Riem^{g_{\Phi}}_{ijkl} = \scal{\II_{ik}}{\II_{jl}} - \scal{\II_{il}}{\II_{jk}} \in L^2(\B,g_{\Phi}).
		\end{align*}
		We deduce that the Bach tensor of $g_{\Phi}$ is also quadratic in $\II$:
		\begin{align*}
			|B^{g_{\Phi}}|_{g_{\Phi}} \leq C \left( |\g^2\II|_{g_{\Phi}} |\II|_{g_{\Phi}} + |\g \II|_{g_{\Phi}}^2 + |\II|_{g_{\Phi}}^4 \right).
		\end{align*}
		Thus, we obtain $B^{g_{\Phi}}\in L^p(\B,g_{\Phi})$ for some $p=p(\ve)>1$. Therefore, the assumptions \ref{asump:finite_volp} and \ref{asump:Bachp} of \Cref{th:Huber4D} are satisfied. The Sobolev inequality \ref{asump:Sobolevp} is verified in \eqref{eq:Soblev_immersions}. The assumption \ref{asump:Laplacianp} follows from a uniform estimate of the Green function for Dirichlet boundary conditions. As explained in \cite{carron1996}, this is a consequence of the Sobolev inequality.
	\end{proof}
	
	\bibliographystyle{plain}
	\bibliography{Huber_bib.bib}

@book {doCarmo,
	AUTHOR = {do Carmo, Manfredo P.},
	TITLE = {Differential forms and applications},
	SERIES = {Universitext},
	NOTE = {Translated from the 1971 Portuguese original},
	PUBLISHER = {Springer-Verlag, Berlin},
	YEAR = {1994},
	PAGES = {x+118},
	ISBN = {3-540-57618-5},
	MRCLASS = {58A10 (53-01 53A05 58-01)},
	MRNUMBER = {1301070},
	MRREVIEWER = {Frans\ Cantrijn},
	DOI = {10.1007/978-3-642-57951-6},
	URL = {https://doi.org/10.1007/978-3-642-57951-6},
}

@article {LeguilRosenberg,
	AUTHOR = {Leguil, Martin and Rosenberg, Harold},
	TITLE = {On harmonic diffeomorphisms from conformal annuli to
	{R}iemannian annuli},
	JOURNAL = {Mat. Contemp.},
	FJOURNAL = {Matem\'atica Contempor\^anea},
	VOLUME = {43},
	YEAR = {2014},
	PAGES = {171--221},
	ISSN = {0103-9059,2317-6636},
	MRCLASS = {53C43 (53A10)},
	MRNUMBER = {3426261},
	MRREVIEWER = {Joe\ S.\ Wang},
}

@article {Troyanov10,
	AUTHOR = {Troyanov, Marc},
	TITLE = {On the {H}odge decomposition in {$\Bbb R^n$}},
	JOURNAL = {Mosc. Math. J.},
	FJOURNAL = {Moscow Mathematical Journal},
	VOLUME = {9},
	YEAR = {2009},
	NUMBER = {4},
	PAGES = {899--926, 936},
	ISSN = {1609-3321,1609-4514},
	MRCLASS = {58A14 (42B20 46F05)},
	MRNUMBER = {2663996},
	MRREVIEWER = {Gilles\ Carron},
	DOI = {10.17323/1609-4514-2009-9-4-899-926},
	URL = {https://doi.org/10.17323/1609-4514-2009-9-4-899-926},
}

@article {Her97,
	AUTHOR = {Herzlich, Marc},
	TITLE = {Compactification conforme des vari\'et\'es asymptotiquement
	plates},
	JOURNAL = {Bull. Soc. Math. France},
	FJOURNAL = {Bulletin de la Soci\'et\'e{} Math\'ematique de France},
	VOLUME = {125},
	YEAR = {1997},
	NUMBER = {1},
	PAGES = {55--91},
	ISSN = {0037-9484,2102-622X},
	MRCLASS = {53C21 (53C80)},
	MRNUMBER = {1459298},
	MRREVIEWER = {Piotr\ T.\ Chru\'sciel},
	URL = {http://www.numdam.org/item?id=BSMF_1997__125_1_55_0},
}

@article {CL22,
	AUTHOR = {Chen, Bo and Li, Yuxiang},
	TITLE = {Huber's theorem for manifolds with {$L^{\frac n2}$} integrable
	{R}icci curvatures},
	JOURNAL = {Trans. Amer. Math. Soc.},
	FJOURNAL = {Transactions of the American Mathematical Society},
	VOLUME = {375},
	YEAR = {2022},
	NUMBER = {8},
	PAGES = {5907--5922},
	ISSN = {0002-9947,1088-6850},
	MRCLASS = {53C18 (53C21)},
	MRNUMBER = {4469241},
	MRREVIEWER = {Mohameden\ Ahmedou},
	DOI = {10.1090/tran/8703},
	URL = {https://doi.org/10.1090/tran/8703},
}

@article{CH02,
	Author = {Carron, Gilles and Herzlich, Marc},
	Title = {The {Huber} theorem for non-compact conformally flat manifolds},
	FJournal = {Commentarii Mathematici Helvetici},
	Journal = {Comment. Math. Helv.},
	ISSN = {0010-2571},
	Volume = {77},
	Number = {1},
	Pages = {192--220},
	Year = {2002},
	Language = {English},
	DOI = {10.1007/s00014-002-8336-0},
	Keywords = {53C21,58J60},
	zbMATH = {1756816},
	Zbl = {1009.53031}
}

@article {Carron20,
	AUTHOR = {Carron, Gilles},
	TITLE = {Euclidean volume growth for complete {R}iemannian manifolds},
	JOURNAL = {Milan J. Math.},
	FJOURNAL = {Milan Journal of Mathematics},
	VOLUME = {88},
	YEAR = {2020},
	NUMBER = {2},
	PAGES = {455--478},
	ISSN = {1424-9286,1424-9294},
	MRCLASS = {53C21 (58C40 58J35 58J50)},
	MRNUMBER = {4182081},
	MRREVIEWER = {Peijun\ Wang},
	DOI = {10.1007/s00032-020-00321-8},
	URL = {https://doi.org/10.1007/s00032-020-00321-8},
}

@article {CQY,
	AUTHOR = {Chang, Sun-Yung A. and Qing, Jie and Yang, Paul C.},
	TITLE = {Compactification of a class of conformally flat 4-manifold},
	JOURNAL = {Invent. Math.},
	FJOURNAL = {Inventiones Mathematicae},
	VOLUME = {142},
	YEAR = {2000},
	NUMBER = {1},
	PAGES = {65--93},
	ISSN = {0020-9910,1432-1297},
	MRCLASS = {53C21 (58J60)},
	MRNUMBER = {1784799},
	MRREVIEWER = {Robert\ McOwen},
	DOI = {10.1007/s002220000083},
	URL = {https://doi.org/10.1007/s002220000083},
}

@article {Toro,
	AUTHOR = {Toro, Tatiana},
	TITLE = {Surfaces with generalized second fundamental form in {$L^2$}
	are {L}ipschitz manifolds},
	JOURNAL = {J. Differential Geom.},
	FJOURNAL = {Journal of Differential Geometry},
	VOLUME = {39},
	YEAR = {1994},
	NUMBER = {1},
	PAGES = {65--101},
	ISSN = {0022-040X,1945-743X},
	MRCLASS = {49Q05},
	MRNUMBER = {1258915},
	MRREVIEWER = {J.\ E.\ Brothers},
	URL = {http://projecteuclid.org/euclid.jdg/1214454677},
}

@book {Gra1,
	AUTHOR = {Grafakos, Loukas},
	TITLE = {Classical {F}ourier analysis},
	SERIES = {Graduate Texts in Mathematics},
	VOLUME = {249},
	EDITION = {Second},
	PUBLISHER = {Springer, New York},
	YEAR = {2008},
	PAGES = {xvi+489},
	ISBN = {978-0-387-09431-1},
	MRCLASS = {42-01 (42Bxx)},
	MRNUMBER = {2445437},
	MRREVIEWER = {Andreas\ Seeger},
}

@article {CMLS,
	AUTHOR = {Coifman, R. and Lions, P.-L. and Meyer, Y. and Semmes, S.},
	TITLE = {Compensated compactness and {H}ardy spaces},
	JOURNAL = {J. Math. Pures Appl. (9)},
	FJOURNAL = {Journal de Math\'ematiques Pures et Appliqu\'ees. Neuvi\`eme
	S\'erie},
	VOLUME = {72},
	YEAR = {1993},
	NUMBER = {3},
	PAGES = {247--286},
	ISSN = {0021-7824},
	MRCLASS = {46E99 (35S05 42B30 46F10 46N20 47G30)},
	MRNUMBER = {1225511},
}

@article {Ri14,
	AUTHOR = {Rivi\`ere, Tristan},
	TITLE = {Variational principles for immersed surfaces with
	{$L^2$}-bounded second fundamental form},
	JOURNAL = {J. Reine Angew. Math.},
	FJOURNAL = {Journal f\"ur die Reine und Angewandte Mathematik. [Crelle's
	Journal]},
	VOLUME = {695},
	YEAR = {2014},
	PAGES = {41--98},
	ISSN = {0075-4102,1435-5345},
	MRCLASS = {58E30 (53C42)},
	MRNUMBER = {3276154},
	MRREVIEWER = {Themistocles\ M.\ Rassias},
	DOI = {10.1515/crelle-2012-0106},
	URL = {https://doi.org/10.1515/crelle-2012-0106},
}

@article {Huber,
	AUTHOR = {Huber, Alfred},
	TITLE = {On subharmonic functions and differential geometry in the
	large},
	JOURNAL = {Comment. Math. Helv.},
	FJOURNAL = {Commentarii Mathematici Helvetici},
	VOLUME = {32},
	YEAR = {1957},
	PAGES = {13--72},
	ISSN = {0010-2571,1420-8946},
	MRCLASS = {30.00 (31.00)},
	MRNUMBER = {94452},
	MRREVIEWER = {E.\ F.\ Beckenbach},
	DOI = {10.1007/BF02564570},
	URL = {https://doi.org/10.1007/BF02564570},
}

@book{helein2002,
  title = {Harmonic Maps, Conservation Laws and Moving Frames},
  author = {H{\'e}lein, Fr{\'e}d{\'e}ric},
  year = {2002},
  series = {Cambridge {{Tracts}} in {{Mathematics}}},
  edition = {Second},
  volume = {150},
  publisher = {Cambridge University Press, Cambridge},
  doi = {10.1017/CBO9780511543036},
  urldate = {2023-06-11},
  isbn = {978-0-521-81160-6},
  mrnumber = {1913803},
  file = {C:\Users\doria\Zotero\storage\F7MG5TW5\publdoc.html}
}

@article{laurain2014,
  title = {Angular Energy Quantization for Linear Elliptic Systems with Antisymmetric Potentials and Applications},
  author = {Laurain, Paul and Rivi{\`e}re, Tristan},
  year = {2014},
  month = may,
  journal = {Analysis \& PDE},
  volume = {7},
  number = {1},
  pages = {1--41},
  publisher = {Mathematical Sciences Publishers},
  issn = {1948-206X},
  doi = {10.2140/apde.2014.7.1},
  urldate = {2023-01-04},
  file = {C:\Users\doria\Zotero\storage\ER2YNHMP\Laurain et Rivière - 2014 - Angular energy quantization for linear elliptic sy.pdf}
}

@book{warner1983,
  title = {Foundations of Differentiable Manifolds and {{Lie}} Groups},
  author = {Warner, Frank W.},
  year = {1983},
  series = {Graduate {{Texts}} in {{Mathematics}}},
  volume = {94},
  publisher = {Springer-Verlag, New York-Berlin},
  isbn = {978-0-387-90894-6},
  mrnumber = {722297},
  file = {C:\Users\doria\Zotero\storage\KSXE3UL8\article.html}
}

@book{spivak1999,
	title =     {A Comprehensive Introduction to Differential Geometry},
	author =    {Michael Spivak},
	publisher = {Publish or Perish},
	isbn =      {9780914098706,0914098705},
	year =      {1999},
	edition =   {3},
	volume =    {Volume 1},
}

@article {muller1995,
	AUTHOR = {M\"{u}ller, S. and \v{S}ver\'{a}k, V.},
	TITLE = {On surfaces of finite total curvature},
	JOURNAL = {J. Differential Geom.},
	FJOURNAL = {Journal of Differential Geometry},
	VOLUME = {42},
	YEAR = {1995},
	NUMBER = {2},
	PAGES = {229--258},
	ISSN = {0022-040X,1945-743X},
	MRCLASS = {53A05 (53C21)},
	MRNUMBER = {1366547},
	MRREVIEWER = {Tatiana\ Toro},
	URL = {http://projecteuclid.org/euclid.jdg/1214457233},
}

@article {bernard2014,
	AUTHOR = {Bernard, Yann and Rivi\`ere, Tristan},
	TITLE = {Energy quantization for {W}illmore surfaces and applications},
	JOURNAL = {Ann. of Math. (2)},
	FJOURNAL = {Annals of Mathematics. Second Series},
	VOLUME = {180},
	YEAR = {2014},
	NUMBER = {1},
	PAGES = {87--136},
	ISSN = {0003-486X,1939-8980},
	MRCLASS = {53C42},
	MRNUMBER = {3194812},
	MRREVIEWER = {Haizhong\ Li},
	DOI = {10.4007/annals.2014.180.1.2},
	URL = {https://doi.org/10.4007/annals.2014.180.1.2},
}

@incollection {curry2018,
	AUTHOR = {Curry, Sean N. and Gover, A. Rod},
	TITLE = {An introduction to conformal geometry and tractor calculus,
	with a view to applications in general relativity},
	BOOKTITLE = {Asymptotic analysis in general relativity},
	SERIES = {London Math. Soc. Lecture Note Ser.},
	VOLUME = {443},
	PAGES = {86--170},
	PUBLISHER = {Cambridge Univ. Press, Cambridge},
	YEAR = {2018},
	ISBN = {978-1-316-64940-4},
	MRCLASS = {53A30 (35Q75 53B15 53C29 83C05)},
	MRNUMBER = {3792084},
	MRREVIEWER = {Travis\ John\ Willse},
}

@phdthesis{vyatkin,
	title={Manufacturing Conformal Invariants of Hypersurfaces},
	author={Yuri Vyatkin},
	year={2013},
	school={University of Auckland}
}

@article {marques2007,
	AUTHOR = {Marques, Fernando C.},
	TITLE = {Conformal deformations to scalar-flat metrics with constant
	mean curvature on the boundary},
	JOURNAL = {Comm. Anal. Geom.},
	FJOURNAL = {Communications in Analysis and Geometry},
	VOLUME = {15},
	YEAR = {2007},
	NUMBER = {2},
	PAGES = {381--405},
	ISSN = {1019-8385,1944-9992},
	MRCLASS = {53C21 (53C44)},
	MRNUMBER = {2344328},
	MRREVIEWER = {Mohameden\ Ahmedou},
	URL = {http://projecteuclid.org/euclid.cag/1186755300},
}

@article {tian2005,
	AUTHOR = {Tian, Gang and Viaclovsky, Jeff},
	TITLE = {Bach-flat asymptotically locally {E}uclidean metrics},
	JOURNAL = {Invent. Math.},
	FJOURNAL = {Inventiones Mathematicae},
	VOLUME = {160},
	YEAR = {2005},
	NUMBER = {2},
	PAGES = {357--415},
	ISSN = {0020-9910,1432-1297},
	MRCLASS = {53C21 (58E11)},
	MRNUMBER = {2138071},
	MRREVIEWER = {John\ Urbas},
	DOI = {10.1007/s00222-004-0412-1},
	URL = {https://doi.org/10.1007/s00222-004-0412-1},
}

@article {carron2014,
	AUTHOR = {Carron, Gilles},
	TITLE = {Some old and new results about rigidity of critical metrics},
	JOURNAL = {Ann. Sc. Norm. Super. Pisa Cl. Sci. (5)},
	FJOURNAL = {Annali della Scuola Normale Superiore di Pisa. Classe di
	Scienze. Serie V},
	VOLUME = {13},
	YEAR = {2014},
	NUMBER = {4},
	PAGES = {1091--1113},
	ISSN = {0391-173X,2036-2145},
	MRCLASS = {53C20 (53C24 53C25 58E11)},
	MRNUMBER = {3362120},
	MRREVIEWER = {Yawei\ Chu},
}

@article {druet2002,
	AUTHOR = {Druet, Olivier and Hebey, Emmanuel},
	TITLE = {The {$AB$} program in geometric analysis: sharp {S}obolev
	inequalities and related problems},
	JOURNAL = {Mem. Amer. Math. Soc.},
	FJOURNAL = {Memoirs of the American Mathematical Society},
	VOLUME = {160},
	YEAR = {2002},
	NUMBER = {761},
	PAGES = {viii+98},
	ISSN = {0065-9266,1947-6221},
	MRCLASS = {58J05 (35J20 46E35 53C21 58E35)},
	MRNUMBER = {1938183},
	MRREVIEWER = {Gilles\ Carron},
	DOI = {10.1090/memo/0761},
	URL = {https://doi.org/10.1090/memo/0761},
}

@article {escobar1994,
	AUTHOR = {Escobar, Jos\'e{} F.},
	TITLE = {Addendum: ``{C}onformal deformation of a {R}iemannian metric
	to a scalar flat metric with constant mean curvature on the
	boundary'' [{A}nn. of {M}ath. (2) {\bf 136} (1992), no. 1,
	1--50; {MR}1173925 (93e:53046)]},
	JOURNAL = {Ann. of Math. (2)},
	FJOURNAL = {Annals of Mathematics. Second Series},
	VOLUME = {139},
	YEAR = {1994},
	NUMBER = {3},
	PAGES = {749--750},
	ISSN = {0003-486X,1939-8980},
	MRCLASS = {53C21 (53C25 58G30)},
	MRNUMBER = {1283876},
	DOI = {10.2307/2118578},
	URL = {https://doi.org/10.2307/2118578},
}

@incollection {carron1996,
	AUTHOR = {Carron, Gilles},
	TITLE = {In\'egalit\'es isop\'erim\'etriques de {F}aber-{K}rahn et
	cons\'equences},
	BOOKTITLE = {Actes de la {T}able {R}onde de {G}\'eom\'etrie
	{D}iff\'erentielle ({L}uminy, 1992)},
	SERIES = {S\'emin. Congr.},
	VOLUME = {1},
	PAGES = {205--232},
	PUBLISHER = {Soc. Math. France, Paris},
	YEAR = {1996},
	ISBN = {2-85629-047-7},
	MRCLASS = {58G25 (31C15 46E35 58G11)},
	MRNUMBER = {1427759},
	MRREVIEWER = {Thierry\ Coulhon},
}

@article {anderson1990,
	AUTHOR = {Anderson, Michael T.},
	TITLE = {Convergence and rigidity of manifolds under {R}icci curvature
	bounds},
	JOURNAL = {Invent. Math.},
	FJOURNAL = {Inventiones Mathematicae},
	VOLUME = {102},
	YEAR = {1990},
	NUMBER = {2},
	PAGES = {429--445},
	ISSN = {0020-9910,1432-1297},
	MRCLASS = {53C23 (53C21 58D27)},
	MRNUMBER = {1074481},
	MRREVIEWER = {Gudlaugur\ Thorbergsson},
	DOI = {10.1007/BF01233434},
	URL = {https://doi.org/10.1007/BF01233434},
}

@article {anderson2005,
	AUTHOR = {Anderson, Michael T.},
	TITLE = {Orbifold compactness for spaces of {R}iemannian metrics and
	applications},
	JOURNAL = {Math. Ann.},
	FJOURNAL = {Mathematische Annalen},
	VOLUME = {331},
	YEAR = {2005},
	NUMBER = {4},
	PAGES = {739--778},
	ISSN = {0025-5831,1432-1807},
	MRCLASS = {53C21 (58E11)},
	MRNUMBER = {2148795},
	MRREVIEWER = {John\ Urbas},
	DOI = {10.1007/s00208-004-0603-5},
	URL = {https://doi.org/10.1007/s00208-004-0603-5},
}

@book {besse2008,
	AUTHOR = {Besse, Arthur L.},
	TITLE = {Einstein manifolds},
	SERIES = {Classics in Mathematics},
	NOTE = {Reprint of the 1987 edition},
	PUBLISHER = {Springer-Verlag, Berlin},
	YEAR = {2008},
	PAGES = {xii+516},
	ISBN = {978-3-540-74120-6},
	MRCLASS = {53C25 (53-02)},
	MRNUMBER = {2371700},
}

@article {tian2008,
	AUTHOR = {Tian, Gang and Viaclovsky, Jeff},
	TITLE = {Volume growth, curvature decay, and critical metrics},
	JOURNAL = {Comment. Math. Helv.},
	FJOURNAL = {Commentarii Mathematici Helvetici. A Journal of the Swiss
	Mathematical Society},
	VOLUME = {83},
	YEAR = {2008},
	NUMBER = {4},
	PAGES = {889--911},
	ISSN = {0010-2571,1420-8946},
	MRCLASS = {53C21 (58E11)},
	MRNUMBER = {2442967},
	MRREVIEWER = {John\ Urbas},
	DOI = {10.4171/CMH/147},
	URL = {https://doi.org/10.4171/CMH/147},
}

@article {aldana2021,
	AUTHOR = {Aldana, Clara L. and Carron, Gilles and Tapie, Samuel},
	TITLE = {{$A_\infty$}-weights and compactness of conformal metrics
	under {$L^{n/2}$} curvature bounds},
	JOURNAL = {Anal. PDE},
	FJOURNAL = {Analysis \& PDE},
	VOLUME = {14},
	YEAR = {2021},
	NUMBER = {7},
	PAGES = {2163--2205},
	ISSN = {2157-5045,1948-206X},
	MRCLASS = {53C20 (53C18 53C23 58J60)},
	MRNUMBER = {4353568},
	MRREVIEWER = {Yan\ He},
	DOI = {10.2140/apde.2021.14.2163},
	URL = {https://doi.org/10.2140/apde.2021.14.2163},
}

@article {bando1989,
	AUTHOR = {Bando, Shigetoshi and Kasue, Atsushi and Nakajima, Hiraku},
	TITLE = {On a construction of coordinates at infinity on manifolds with
	fast curvature decay and maximal volume growth},
	JOURNAL = {Invent. Math.},
	FJOURNAL = {Inventiones Mathematicae},
	VOLUME = {97},
	YEAR = {1989},
	NUMBER = {2},
	PAGES = {313--349},
	ISSN = {0020-9910,1432-1297},
	MRCLASS = {53C20 (53C25)},
	MRNUMBER = {1001844},
	MRREVIEWER = {Thomas\ H.\ Otway},
	DOI = {10.1007/BF01389045},
	URL = {https://doi.org/10.1007/BF01389045},
}

@article {ancona2009,
	AUTHOR = {Ancona, Alano},
	TITLE = {Elliptic operators, conormal derivatives and positive parts of
	functions},
	NOTE = {With an appendix by Ha\"im Brezis},
	JOURNAL = {J. Funct. Anal.},
	FJOURNAL = {Journal of Functional Analysis},
	VOLUME = {257},
	YEAR = {2009},
	NUMBER = {7},
	PAGES = {2124--2158},
	ISSN = {0022-1236,1096-0783},
	MRCLASS = {31B25 (31B35 31C40 35J15 46E35)},
	MRNUMBER = {2548032},
	MRREVIEWER = {Anna\ Maria\ Candela},
	DOI = {10.1016/j.jfa.2008.12.019},
	URL = {https://doi.org/10.1016/j.jfa.2008.12.019},
}

@article {cabre2022,
	AUTHOR = {Cabr\'e, Xavier and Miraglio, Pietro},
	TITLE = {Universal {H}ardy-{S}obolev inequalities on hypersurfaces of
	{E}uclidean space},
	JOURNAL = {Commun. Contemp. Math.},
	FJOURNAL = {Communications in Contemporary Mathematics},
	VOLUME = {24},
	YEAR = {2022},
	NUMBER = {5},
	PAGES = {Paper No. 2150063, 25},
	ISSN = {0219-1997,1793-6683},
	MRCLASS = {53A10 (26D10 46E35 53A07)},
	MRNUMBER = {4435963},
	MRREVIEWER = {Serena\ Dipierro},
	DOI = {10.1142/S0219199721500632},
	URL = {https://doi.org/10.1142/S0219199721500632},
}

@article {allard1972,
	AUTHOR = {Allard, William K.},
	TITLE = {On the first variation of a varifold},
	JOURNAL = {Ann. of Math. (2)},
	FJOURNAL = {Annals of Mathematics. Second Series},
	VOLUME = {95},
	YEAR = {1972},
	PAGES = {417--491},
	ISSN = {0003-486X},
	MRCLASS = {49F20},
	MRNUMBER = {307015},
	MRREVIEWER = {M.\ Klingmann},
	DOI = {10.2307/1970868},
	URL = {https://doi.org/10.2307/1970868},
}

@article {michael1973,
	AUTHOR = {Michael, J. H. and Simon, L. M.},
	TITLE = {Sobolev and mean-value inequalities on generalized
	submanifolds of {$R\sp{n}$}},
	JOURNAL = {Comm. Pure Appl. Math.},
	FJOURNAL = {Communications on Pure and Applied Mathematics},
	VOLUME = {26},
	YEAR = {1973},
	PAGES = {361--379},
	ISSN = {0010-3640,1097-0312},
	MRCLASS = {49F10 (46E35)},
	MRNUMBER = {344978},
	MRREVIEWER = {David\ Kinderlehrer},
	DOI = {10.1002/cpa.3160260305},
	URL = {https://doi.org/10.1002/cpa.3160260305},
}

@article {hiroshima1995,
	AUTHOR = {Hiroshima, Tsutomu},
	TITLE = {{$C^\alpha$} compactness theorem for {R}iemannian manifolds
	with bounds on diameter, injectivity radius, and some integral
	norms of {R}icci curvature},
	JOURNAL = {Indiana Univ. Math. J.},
	FJOURNAL = {Indiana University Mathematics Journal},
	VOLUME = {44},
	YEAR = {1995},
	NUMBER = {2},
	PAGES = {397--411},
	ISSN = {0022-2518,1943-5258},
	MRCLASS = {53C21 (58G30)},
	MRNUMBER = {1355404},
	MRREVIEWER = {Zhi\ Ren\ Jin},
	DOI = {10.1512/iumj.1995.44.1993},
	URL = {https://doi.org/10.1512/iumj.1995.44.1993},
}

@incollection {gallot1988,
	AUTHOR = {Gallot, Sylvestre},
	TITLE = {Isoperimetric inequalities based on integral norms of {R}icci
	curvature},
	NOTE = {Colloque Paul L\'evy sur les Processus Stochastiques
	(Palaiseau, 1987)},
	JOURNAL = {Ast\'erisque},
	FJOURNAL = {Ast\'erisque},
	NUMBER = {157-158},
	YEAR = {1988},
	PAGES = {191--216},
	ISSN = {0303-1179,2492-5926},
	MRCLASS = {58G25 (53C20)},
	MRNUMBER = {976219},
	MRREVIEWER = {R.\ Schimming},
}

@article {marini2024,
	AUTHOR = {Marini, Ludovico and Meda, Stefano and Pigola, Stefano and
	Veronelli, Giona},
	TITLE = {{$L^p$} gradient estimates and {C}alder\'on-{Z}ygmund
	inequalities under {R}icci lower bounds},
	JOURNAL = {Rev. Mat. Iberoam.},
	FJOURNAL = {Revista Matem\'atica Iberoamericana},
	VOLUME = {40},
	YEAR = {2024},
	NUMBER = {3},
	PAGES = {803--826},
	ISSN = {0213-2230,2235-0616},
	MRCLASS = {58J05 (35B45 35J05 35R01 42B20 53C21)},
	MRNUMBER = {4739823},
	DOI = {10.4171/rmi/1476},
	URL = {https://doi.org/10.4171/rmi/1476},
}

@article {yang1992I,
	AUTHOR = {Yang, Deane},
	TITLE = {Convergence of {R}iemannian manifolds with integral bounds on
	curvature. {I}},
	JOURNAL = {Ann. Sci. \'Ecole Norm. Sup. (4)},
	FJOURNAL = {Annales Scientifiques de l'\'Ecole Normale Sup\'erieure.
	Quatri\`eme S\'erie},
	VOLUME = {25},
	YEAR = {1992},
	NUMBER = {1},
	PAGES = {77--105},
	ISSN = {0012-9593},
	MRCLASS = {53C23 (53C21)},
	MRNUMBER = {1152614},
	MRREVIEWER = {Xiao\ Wei\ Peng},
	URL = {http://www.numdam.org/item?id=ASENS_1992_4_25_1_77_0},
}

@article {yang1992II,
	AUTHOR = {Yang, Deane},
	TITLE = {Convergence of {R}iemannian manifolds with integral bounds on
	curvature. {II}},
	JOURNAL = {Ann. Sci. \'Ecole Norm. Sup. (4)},
	FJOURNAL = {Annales Scientifiques de l'\'Ecole Normale Sup\'erieure.
	Quatri\`eme S\'erie},
	VOLUME = {25},
	YEAR = {1992},
	NUMBER = {2},
	PAGES = {179--199},
	ISSN = {0012-9593},
	MRCLASS = {53C23 (53C20 53C21)},
	MRNUMBER = {1169351},
	MRREVIEWER = {Zhongmin\ Shen},
	URL = {http://www.numdam.org/item?id=ASENS_1992_4_25_2_179_0},
}

@article {gao1990,
	AUTHOR = {Gao, L. Zhiyong},
	TITLE = {Einstein metrics},
	JOURNAL = {J. Differential Geom.},
	FJOURNAL = {Journal of Differential Geometry},
	VOLUME = {32},
	YEAR = {1990},
	NUMBER = {1},
	PAGES = {155--183},
	ISSN = {0022-040X,1945-743X},
	MRCLASS = {58D27 (53C25 58D17)},
	MRNUMBER = {1064870},
	MRREVIEWER = {Xiao\ Wei\ Peng},
	URL = {http://projecteuclid.org/euclid.jdg/1214445042},
}

@article {streets2010,
	AUTHOR = {Streets, Jeffrey},
	TITLE = {Asymptotic curvature decay and removal of singularities of
	{B}ach-flat metrics},
	JOURNAL = {Trans. Amer. Math. Soc.},
	FJOURNAL = {Transactions of the American Mathematical Society},
	VOLUME = {362},
	YEAR = {2010},
	NUMBER = {3},
	PAGES = {1301--1324},
	ISSN = {0002-9947,1088-6850},
	MRCLASS = {53C25 (53C21 58E11)},
	MRNUMBER = {2563730},
	MRREVIEWER = {John\ Urbas},
	DOI = {10.1090/S0002-9947-09-04960-5},
	URL = {https://doi.org/10.1090/S0002-9947-09-04960-5},
}

@article {smith1992,
	AUTHOR = {Smith, P. D. and Yang, Deane},
	TITLE = {Removing point singularities of {R}iemannian manifolds},
	JOURNAL = {Trans. Amer. Math. Soc.},
	FJOURNAL = {Transactions of the American Mathematical Society},
	VOLUME = {333},
	YEAR = {1992},
	NUMBER = {1},
	PAGES = {203--219},
	ISSN = {0002-9947,1088-6850},
	MRCLASS = {53C21 (53C22)},
	MRNUMBER = {1052910},
	MRREVIEWER = {Chun-Li\ Shen},
	DOI = {10.2307/2154106},
	URL = {https://doi.org/10.2307/2154106},
}

@book {chavel1984,
	AUTHOR = {Chavel, Isaac},
	TITLE = {Eigenvalues in {R}iemannian geometry},
	SERIES = {Pure and Applied Mathematics},
	VOLUME = {115},
	NOTE = {Including a chapter by Burton Randol,
	With an appendix by Jozef Dodziuk},
	PUBLISHER = {Academic Press, Inc., Orlando, FL},
	YEAR = {1984},
	PAGES = {xiv+362},
	ISBN = {0-12-170640-0},
	MRCLASS = {58G25 (35P99 53C20)},
	MRNUMBER = {768584},
	MRREVIEWER = {G\'erard\ Besson},
}

@article {petersen2001,
	AUTHOR = {Petersen, Peter and Wei, Guofang},
	TITLE = {Analysis and geometry on manifolds with integral {R}icci
	curvature bounds. {II}},
	JOURNAL = {Trans. Amer. Math. Soc.},
	FJOURNAL = {Transactions of the American Mathematical Society},
	VOLUME = {353},
	YEAR = {2001},
	NUMBER = {2},
	PAGES = {457--478},
	ISSN = {0002-9947,1088-6850},
	MRCLASS = {53C20 (53C21)},
	MRNUMBER = {1709777},
	MRREVIEWER = {Joseph\ E.\ Borzellino},
	DOI = {10.1090/S0002-9947-00-02621-0},
	URL = {https://doi.org/10.1090/S0002-9947-00-02621-0},
}

@book {lee2018,
	AUTHOR = {Lee, John M.},
	TITLE = {Introduction to {R}iemannian manifolds},
	SERIES = {Graduate Texts in Mathematics},
	VOLUME = {176},
	NOTE = {Second edition of [ MR1468735]},
	PUBLISHER = {Springer, Cham},
	YEAR = {2018},
	PAGES = {xiii+437},
	ISBN = {978-3-319-91754-2; 978-3-319-91755-9},
	MRCLASS = {53-01 (53B20 53B30 53C20 53C21)},
	MRNUMBER = {3887684},
	MRREVIEWER = {Robert J. Low},
}

@article {korzy2003,
	AUTHOR = {Korzy\'{n}ski, Miko\l aj and Lewandowski, Jerzy},
	TITLE = {The normal conformal {C}artan connection and the {B}ach
	tensor},
	JOURNAL = {Classical Quantum Gravity},
	FJOURNAL = {Classical and Quantum Gravity},
	VOLUME = {20},
	YEAR = {2003},
	NUMBER = {16},
	PAGES = {3745--3764},
	ISSN = {0264-9381},
	MRCLASS = {53C07 (53A30 53C80)},
	MRNUMBER = {2001693},
	MRREVIEWER = {Simonetta Frittelli},
	DOI = {10.1088/0264-9381/20/16/314},
	URL = {https://doi.org/10.1088/0264-9381/20/16/314},
}

@article {gover2008,
	AUTHOR = {Gover, A. Rod and Somberg, Petr and Sou\v{c}ek, Vladim\'{\i}r},
	TITLE = {Yang-{M}ills detour complexes and conformal geometry},
	JOURNAL = {Comm. Math. Phys.},
	FJOURNAL = {Communications in Mathematical Physics},
	VOLUME = {278},
	YEAR = {2008},
	NUMBER = {2},
	PAGES = {307--327},
	ISSN = {0010-3616},
	MRCLASS = {58J10 (53C07 53C27 53C28)},
	MRNUMBER = {2372762},
	MRREVIEWER = {Jean-Louis Milhorat},
	DOI = {10.1007/s00220-007-0401-5},
	URL = {https://doi.org/10.1007/s00220-007-0401-5},
}

@incollection {riviere2020,
	AUTHOR = {Rivi\`ere, Tristan},
	TITLE = {The variations of {Y}ang-{M}ills {L}agrangian},
	BOOKTITLE = {Geometric analysis---in honor of {G}ang {T}ian's 60th
	birthday},
	SERIES = {Progr. Math.},
	VOLUME = {333},
	PAGES = {305--379},
	PUBLISHER = {Birkh\"{a}user/Springer, Cham},
	YEAR = {[2020] \copyright 2020},
	MRCLASS = {58E15 (35J25 35J47)},
	MRNUMBER = {4181007},
	MRREVIEWER = {Dana S. Fine},
	DOI = {10.1007/978-3-030-34953-0\_15},
	URL = {https://doi.org/10.1007/978-3-030-34953-0_15},
}

@article {tao2004,
	AUTHOR = {Tao, Terence and Tian, Gang},
	TITLE = {A singularity removal theorem for {Y}ang-{M}ills fields in
	higher dimensions},
	JOURNAL = {J. Amer. Math. Soc.},
	FJOURNAL = {Journal of the American Mathematical Society},
	VOLUME = {17},
	YEAR = {2004},
	NUMBER = {3},
	PAGES = {557--593},
	ISSN = {0894-0347},
	MRCLASS = {58E15 (53C07)},
	MRNUMBER = {2053951},
	MRREVIEWER = {J\"{u}rgen Eichhorn},
	DOI = {10.1090/S0894-0347-04-00457-6},
	URL = {https://doi.org/10.1090/S0894-0347-04-00457-6},
}

@article{cartan1924,
	AUTHOR={Cartan, Elie},
	TITLE = {Sur les espaces à connexion conforme},
	JOURNAL = {Ann. Soc. Polon. Math.},
	VOLUME = {22},
	YEAR = {1924},
	PAGES = {171-221},
}

@inproceedings {biquard2014,
	AUTHOR = {Biquard, Olivier},
	TITLE = {Einstein 4-manifolds and singularities},
	BOOKTITLE = {Proceedings of the {I}nternational {C}ongress of
	{M}athematicians---{S}eoul 2014. {V}ol. {II}},
	PAGES = {853--865},
	PUBLISHER = {Kyung Moon Sa, Seoul},
	YEAR = {2014},
	ISBN = {978-89-6105-805-6; 978-89-6105-803-2},
	MRCLASS = {53C25 (53A30)},
	MRNUMBER = {3728641},
	MRREVIEWER = {Gordon\ Craig},
}

@article {uhlenbeck1982,
	AUTHOR = {Uhlenbeck, Karen K.},
	TITLE = {Connections with {$L\sp{p}$}\ bounds on curvature},
	JOURNAL = {Comm. Math. Phys.},
	FJOURNAL = {Communications in Mathematical Physics},
	VOLUME = {83},
	YEAR = {1982},
	NUMBER = {1},
	PAGES = {31--42},
	ISSN = {0010-3616,1432-0916},
	MRCLASS = {53C05 (49F10 58E20 81E10)},
	MRNUMBER = {648356},
	MRREVIEWER = {Wolfgang\ L\"ucke},
	URL = {http://projecteuclid.org/euclid.cmp/1103920743},
}

@incollection {fefferman1985,
	AUTHOR = {Fefferman, Charles and Graham, C. Robin},
	TITLE = {Conformal invariants},
	NOTE = {The mathematical heritage of \'Elie Cartan (Lyon, 1984)},
	JOURNAL = {Ast\'erisque},
	FJOURNAL = {Ast\'erisque},
	YEAR = {1985},
	PAGES = {95--116},
	ISSN = {0303-1179,2492-5926},
	MRCLASS = {53C20 (32C10 53A30)},
	MRNUMBER = {837196},
	MRREVIEWER = {Claude\ LeBrun},
}

@article {laurain2018,
	AUTHOR = {Laurain, Paul and Rivi\`ere, Tristan},
	TITLE = {Energy quantization of {W}illmore surfaces at the boundary of
	the moduli space},
	JOURNAL = {Duke Math. J.},
	FJOURNAL = {Duke Mathematical Journal},
	VOLUME = {167},
	YEAR = {2018},
	NUMBER = {11},
	PAGES = {2073--2124},
	ISSN = {0012-7094,1547-7398},
	MRCLASS = {53A30 (35J60 35J93 35R01 49Q10)},
	MRNUMBER = {3843372},
	MRREVIEWER = {Haizhong\ Li},
	DOI = {10.1215/00127094-2018-0010},
	URL = {https://doi.org/10.1215/00127094-2018-0010},
}

@book {iwaniec,
	AUTHOR = {Iwaniec, Tadeusz and Martin, Gaven},
	TITLE = {Geometric function theory and non-linear analysis},
	SERIES = {Oxford Mathematical Monographs},
	PUBLISHER = {The Clarendon Press, Oxford University Press, New York},
	YEAR = {2001},
	PAGES = {xvi+552},
	ISBN = {0-19-850929-4},
	MRCLASS = {30-02 (00A05 30C65 35J60 42B20 58-02)},
	MRNUMBER = {1859913},
	MRREVIEWER = {Mario\ Bonk},
}

@article{martino2023,
	title={Energy quantization for Willmore surfaces with bounded index}, 
	author={Dorian Martino},
	year={2023},
	journal= {J. Eur. Math. Soc. (Accepted), \url{https://arxiv.org/abs/2305.08668}}
}

@article{gauvrit2024,
	title={Morse index stability for {Yang-Mills} connections}, 
	author={Mario Gauvrit and Paul Laurain},
	year={2024},
	journal={\url{https://arxiv.org/abs/2402.09039}}
}

@article {bahuaud2011,
	AUTHOR = {Bahuaud, Eric and Helliwell, Dylan},
	TITLE = {Short-time existence for some higher-order geometric flows},
	JOURNAL = {Comm. Partial Differential Equations},
	FJOURNAL = {Communications in Partial Differential Equations},
	VOLUME = {36},
	YEAR = {2011},
	NUMBER = {12},
	PAGES = {2189--2207},
	ISSN = {0360-5302,1532-4133},
	MRCLASS = {53C44 (35K25 35K59)},
	MRNUMBER = {2852074},
	MRREVIEWER = {Meng\ Zhu},
	DOI = {10.1080/03605302.2011.593015},
	URL = {https://doi.org/10.1080/03605302.2011.593015},
}

@article {chen2023,
	AUTHOR = {Chen, Jiaqi and Lu, Peng and Qing, Jie},
	TITLE = {Conformal {B}ach flow},
	JOURNAL = {Ann. Global Anal. Geom.},
	FJOURNAL = {Annals of Global Analysis and Geometry},
	VOLUME = {63},
	YEAR = {2023},
	NUMBER = {2},
	PAGES = {Paper No. 19, 30},
	ISSN = {0232-704X,1572-9060},
	MRCLASS = {53E20 (35K41 53C43 58J35)},
	MRNUMBER = {4569009},
	MRREVIEWER = {Yong\ Wei},
	DOI = {10.1007/s10455-023-09897-x},
	URL = {https://doi.org/10.1007/s10455-023-09897-x},
}

@article{dalio2023,
	title={Morse Index Stability for Critical Points to Conformally invariant Lagrangians}, 
	author={Francesca Da Lio and Matilde Gianocca and Tristan Rivière},
	year={2023},
	journal={\url{https://arxiv.org/abs/2212.03124}}
}

@book {han2011,
	AUTHOR = {Han, Qing and Lin, Fanghua},
	TITLE = {Elliptic partial differential equations},
	SERIES = {Courant Lecture Notes in Mathematics},
	VOLUME = {1},
	EDITION = {Second},
	PUBLISHER = {Courant Institute of Mathematical Sciences, New York; American
	Mathematical Society, Providence, RI},
	YEAR = {2011},
	PAGES = {x+147},
	ISBN = {978-0-8218-5313-9},
	MRCLASS = {35Jxx (35-01 35B50)},
	MRNUMBER = {2777537},
}

@article {brendle2021,
	AUTHOR = {Brendle, Simon},
	TITLE = {The isoperimetric inequality for a minimal submanifold in
	{E}uclidean space},
	JOURNAL = {J. Amer. Math. Soc.},
	FJOURNAL = {Journal of the American Mathematical Society},
	VOLUME = {34},
	YEAR = {2021},
	NUMBER = {2},
	PAGES = {595--603},
	ISSN = {0894-0347,1088-6834},
	MRCLASS = {53A10 (53A07)},
	MRNUMBER = {4280868},
	MRREVIEWER = {Jianquan\ Ge},
	DOI = {10.1090/jams/969},
	URL = {https://doi.org/10.1090/jams/969},
}

@book {gilbarg2001,
	AUTHOR = {Gilbarg, David and Trudinger, Neil S.},
	TITLE = {Elliptic partial differential equations of second order},
	SERIES = {Classics in Mathematics},
	NOTE = {Reprint of the 1998 edition},
	PUBLISHER = {Springer-Verlag, Berlin},
	YEAR = {2001},
	PAGES = {xiv+517},
	ISBN = {3-540-41160-7},
	MRCLASS = {35-02 (35Jxx)},
	MRNUMBER = {1814364},
}

@article {helein1991,
	AUTHOR = {H\'elein, Fr\'ed\'eric},
	TITLE = {R\'egularit\'e{} des applications faiblement harmoniques entre
	une surface et une vari\'et\'e{} riemannienne},
	JOURNAL = {C. R. Acad. Sci. Paris S\'er. I Math.},
	FJOURNAL = {Comptes Rendus de l'Acad\'emie des Sciences. S\'erie I.
	Math\'ematique},
	VOLUME = {312},
	YEAR = {1991},
	NUMBER = {8},
	PAGES = {591--596},
	ISSN = {0764-4442},
	MRCLASS = {58E20},
	MRNUMBER = {1101039},
	MRREVIEWER = {John\ C.\ Wood},
}

@article{lan2025,
	title={The Analysis of Willmore Surfaces and its Generalizations in Higher Dimensions}, 
	author={Tian Lan and Dorian Martino and Tristan Rivière},
	year={2025},
	journal ={ {\url{https://arxiv.org/abs/2511.01777}} }, 
}

@article {laurain2013,
	AUTHOR = {Laurain, Paul and Rivi\`ere, Tristan},
	TITLE = {Energy quantization for biharmonic maps},
	JOURNAL = {Adv. Calc. Var.},
	FJOURNAL = {Advances in Calculus of Variations},
	VOLUME = {6},
	YEAR = {2013},
	NUMBER = {2},
	PAGES = {191--216},
	ISSN = {1864-8258,1864-8266},
	MRCLASS = {35J48 (35J50)},
	MRNUMBER = {3043576},
	MRREVIEWER = {Marcelo\ F.\ Furtado},
	DOI = {10.1515/acv-2012-0105},
	URL = {https://doi.org/10.1515/acv-2012-0105},
}

@article {ai2017,
	AUTHOR = {Ai, Wanjun and Yin, Hao},
	TITLE = {Neck analysis of extrinsic polyharmonic maps},
	JOURNAL = {Ann. Global Anal. Geom.},
	FJOURNAL = {Annals of Global Analysis and Geometry},
	VOLUME = {52},
	YEAR = {2017},
	NUMBER = {2},
	PAGES = {129--156},
	ISSN = {0232-704X,1572-9060},
	MRCLASS = {35J47 (35B44 35J50)},
	MRNUMBER = {3690012},
	MRREVIEWER = {Luca\ Battaglia},
	DOI = {10.1007/s10455-017-9551-7},
	URL = {https://doi.org/10.1007/s10455-017-9551-7},
}

@article {dalio2015,
	AUTHOR = {Da Lio, Francesca},
	TITLE = {Compactness and bubble analysis for 1/2-harmonic maps},
	JOURNAL = {Ann. Inst. H. Poincar\'e{} C Anal. Non Lin\'eaire},
	FJOURNAL = {Annales de l'Institut Henri Poincar\'e{} C. Analyse Non
	Lin\'eaire},
	VOLUME = {32},
	YEAR = {2015},
	NUMBER = {1},
	PAGES = {201--224},
	ISSN = {0294-1449,1873-1430},
	MRCLASS = {58E20 (35B65 35J50 35J60 35R11 35S30)},
	MRNUMBER = {3303947},
	MRREVIEWER = {Fr\'ed\'eric\ Robert},
	DOI = {10.1016/j.anihpc.2013.11.003},
	URL = {https://doi.org/10.1016/j.anihpc.2013.11.003},
}
	
\end{document}